\newtheorem{cor}{Corollary}
\newtheorem{lemma}{Lemma}[section]
\newtheorem{Add in proof}{Add in proof}[section]
\newtheorem{prop}{Proposition}[section]
\newtheorem{remark}{Remark}
\newtheorem{theorem}{Theorem}
\newcommand{\R}{\ensuremath{\mathbb{R}}}
\def \p{\partial}
\def\intave#1{-\kern-10.7pt\int_{\,#1}}
\def\<{\langle}   \def\>{\rangle}
\def\({\left(}    \def\){\right)}
\def\limsup{\operatornamewithlimits{lim\,sup}}
\newcommand{\loc}{\ensuremath\mathrm{loc}}
\let\d\relax\DeclareMathOperator{\d}{d}
\renewcommand{\d}{\,d}
\DeclareMathOperator{\curl}{curl}
\DeclareMathOperator{\tr}{tr}
\let\div\relax \DeclareMathOperator{\div}{div}
\newcommand{\na}{\ensuremath{\nabla}}
\newcommand{\D}{\ensuremath{\nabla}}
\newcommand\alabel[1]{\addtocounter{equation}{1}\tag{\theequation}\label{#1}}
\DeclareMathOperator{\dist}{dist}
\begin{document}
\title{Existence of minimizers and convergence of critical points for a  new Landau-de Gennes energy functional in nematic liquid crystals}

\numberwithin{equation}{section}
\author{Zhewen Feng and Min-Chun Hong}

\address{Zhewen Feng, Department of Mathematics, The University of Queensland\\
	Brisbane, QLD 4072, Australia}
\email{z.feng@uq.edu.au}

\address{Min-Chun Hong, Department of Mathematics, The University of Queensland\\
Brisbane, QLD 4072, Australia}
\email{hong@maths.uq.edu.au}

\begin{abstract}   The  Landau-de Gennes energy in nematic liquid crystals depends on  four  elastic constants  $L_1$, $L_2$, $L_3$, $L_4$. In the   case of $L_4\neq 0$, Ball and Majumdar (Mol. Cryst. Liq. Cryst., 2010) found an  example that  the original  Landau-de Gennes energy functional in physics  does not  satisfy a  coercivity condition, which causes a problem in mathematics to establish  existence of energy minimizers. At first, we introduce a new Landau-de Gennes energy density  with   $L_4\neq 0$, which is equivalent to the    original Landau-de Gennes density for  uniaxial tensors and  satisfies the coercivity condition for all $Q$-tensors. Secondly, we prove that   solutions of the Landau-de Gennes system can approach a solution  of the $Q$-tensor Oseen-Frank system without using energy minimizers. Thirdly, we  develop a new approach to generalize   the Nguyen and Zarnescu (Calc. Var. PDEs, 2013) convergence result to the case of non-zero elastic constants $L_2$, $L_3$, $L_4$.
     \end{abstract}
\subjclass[2010]{35J20,35Q35,76A15} \keywords{The Landau-de Gennes energy; Coercivity conditions; $Q$-tensors}

 \maketitle

\pagestyle{myheadings} \markright {A new Landau-de Gennes energy}

\section{Introduction}

 A liquid crystal is a state of matter between isotropic liquid and crystalline solid.
 	Based on the molecular positional and orientational order of liquid crystals,  there are three main types: sematic, cholesterics and  nematic.  The nematic liquid crystal is the most common type in  which the general phases are    uniaxial and   biaxial.
 	 In 1971, de Gennes \cite{De} used  $Q$-tensor order parameters  to formulate the elastic energy of liquid crystals  with  Landau's bulk  energy.
 	The  Landau-de Gennes theory has been verified in physics as a successful theory describing both uniaxial and biaxial phases in nematic liquid crystals.
 Indeed,
   Pierre-Gilles de Gennes was awarded  a Nobel prize for physics in 1991    for his discoveries  in liquid crystals and polymers.

  In the Landau-de Gennes framework,  the space of $Q$-tensors  in   the Landau-de Gennes theory is  a space of symmetric, traceless $3\times3$ matrices defined by
			\begin{equation*}
			S_0:=\left\{Q\in \mathbb M^{3\times 3}:\quad Q^T=Q, \, \mbox{tr }Q  =0\right\},
			\end{equation*}
 where  $\mathbb M^{3\times 3}$ denotes the space of $3\times 3$ matrices.
When  $Q\in S_0$ has two equal non-zero eigenvalues, a nematic liquid crystal is said to be uniaxial. When $Q$ has three unequal non-zero eigenvalues, a nematic liquid crystal is said to be  biaxial. For  material constants $a , b, c$,
we define the constant order parameter
	\[s_+:=\frac{b+\sqrt{b^2+24ac}}{4c} \]
and   denote  the identity matrix by $I$. The subspace of  uniaxial $Q$-tensors is given by
	\[S_*:=\left\{Q  \in S_0:\quad Q =s_+ (u\otimes u-\frac 13 I),\quad  u\in S^2 \right\}.\]	
In this paper, we only consider the case of positive constants $a$, $b$, $c$, which corresponds to a lower temperature regime in liquid crystals  (the constant $a$ could also be negative; see \cite{MZ}, \cite{MN}).

 Let $\Omega$ be a domain in $\R^3$. For a tensor $Q\in W^{1,2}(\Omega ; S_0)$,
	the Landau-de Gennes energy is defined by
	\begin{equation}\label{LG energy}
	E_{LG}(Q; \Omega)=\int_{\Omega}f_{LG}(Q,\nabla Q)\,dx:=\int_{\Omega}\left (  f_E(Q,\nabla Q) +  f_B(Q)\right )\,dx,
	\end{equation}
	where $ f_E$ is the elastic energy density  with elastic constants $L_1,...,L_4$ of the form
		\begin{equation}\label{LG}
	 f_E(Q,\D Q):=\frac{L_1}{2}|\D Q|^2+\frac{L_2}{2} \frac{\p Q_{ij}}{\p x_j} \frac{\p Q_{ik}}{\p x_k} +\frac{L_3}{2}\frac{\p
Q_{ik}}{\p x_j}\frac{\p Q_{ij}}{\p x_k}+\frac{L_4}{2}Q_{lk}\frac{\p Q_{ij}}{\p x_l}\frac{\p Q_{ij}}{\p x_k}
	\end{equation}
	 and $f_B(Q)$ is  the bulk  energy density defined by
	\begin{equation}\label{BE}
	 f_B(Q):=-\frac{ a}{2}\tr( Q^2)-\frac{ b}{3}\tr( Q^3)+\frac{ c}{4}\left[\tr( Q^2)\right]^2
	\end{equation}
with  positive material constants $a$, $b$ and $c$. Here and in the sequel, we adopt  the Einstein summation convention for repeated indices.

In  \cite{De}, de Gennes discovered the first  two terms of the elastic energy density in (\ref{LG}) with $L_3=L_4=0$. Since  both  the Oseen-Frank theory and the Landau-de Gennes theory should unify  for modeling  uniaxial liquid crystals,
 Schiele and Trimper  \cite{ST} pointed out  that the
early attempt of de Gennes' work \cite{De} was incomplete since it would require the splay and bend Frank constants to be equal (i.e. $k_1=k_3$) in the Oseen-Frank density (as defined below in \eqref{OF density})  of    uniaxial tensors $Q =s_+ (u\otimes u-\frac 13 I)$. However, some experiments on liquid crystals showed that $k_3>k_1$, so they added a third order  term to original de Gennes' elastic energy density  by
\[ \frac{L_1}{2}|\nabla  Q|^2+\frac{L_2}{2} \frac{\p Q_{ij}}{\p x_j} \frac{\p Q_{ik}}{\p x_k} +\frac{L_4}{2} Q_{lk}\frac{\p Q_{ij}}{\p x_l}\frac{\p Q_{ij}}{\p x_k} \]
with $L_4=\frac{1}{2s_+^3}(k_3-k_1)>0$.  Later, Berreman and Meiboom \cite{BM84} observed that the above two groups
     discarded the surface energy density in the Oseen-Frank density,
     which correlates the blue phase theory for liquid crystals, so
     they proposed to recover a second order term  $\frac{L_3}{2}\frac{\p
Q_{ik}}{\p x_j}\frac{\p Q_{ij}}{\p x_k}$ with four third order terms, but their density is over-determined with the Oseen-Frank density.
     Later, Longa et al. \cite{LMT} gave an extension of the Landau-de Gennes density with 22 independent parameters.
   Finally, combining the work of  Schiele and Trimper  \cite {ST} with Berreman and Meiboom \cite{BM},  Dickmann  \cite  {Di}  found the  full density
    \eqref{LG}, which is consistent with
 the Oseen-Frank density in (\ref {OF density}) for uniaxial  nematic liquid crystals. Since then, the general form (\ref{LG}) of the Landau-de Gennes energy density
    has been widely used in the study of nematic liquid crystals  (e.g.  \cite {MGKB},  \cite {MN},  \cite {Ba}). Under the relation that
\begin{align*}
 &s_+^2L_1 =-\frac 16k_1+\frac 12k_2+\frac 16k_3,\,
	s_+^2L_2 =k_1-k_2-k_4,\,
	 s_+^2L_3 =k_4,\,
	s_+^{3}L_4 =\frac{k_3-k_1}{2},
\end{align*}	
  one can show (c.f. \cite  {MGKB}) that for a  uniaxial tensor $Q =s_+ (u\otimes u-\frac 13 I)$ with $u\in S^2$,
\begin{align*}\alabel{WnF}
	f_E(Q,\nabla Q)=W(u,\na u),
\end{align*}
where  the Oseen-Frank density $W(u,\na u)$  is defined by
 \begin{align}\label{OF density}
			W(u,\na u) =& \frac {k_1} 2(\div u)^2+ \frac {k_2 } 2(u\cdot \curl u)^2+\frac {k_3} 2|u\times \curl u|^2\\
&+\frac {k_2+k_4 } 2(\tr(\na u)^2-(\div u)^2) \nonumber
		\end{align}
for a unit director $u\in W^{1,2} (\Omega; S^2)$.    In  \eqref {OF density}, $k_1$, $k_2$, $k_3$ are the Frank constants for molecular distortion  of splay, twist and bend  respectively and $k_4$ is the Frank constant for the surface energy  (c.f. \cite{DP}).
In 1937, Zvetkov    established
numerical values for p-azoxyanisole (PAA) at $120^\circ C$ (with the unit $10^{-12} \,m/J$) as follows:
\begin{align*}
&k_1=5,\;k_2=3.8,\;k_3=10.1.
\end{align*}
Therefore, according to physical experiments on nematic liquid crystals, the elastic constant $L_4=\frac{1}{2s_+^3}(k_3-k_1)$ is  not equal to zero in general (c.f. \cite{DP}).

A fundamental  problem  in mathematics on the  Landau-de Gennes theory is to establish existence of a minimizer  of the energy functional $E_{LG}(Q,\Omega)$ in  $W^{1,2}_{Q_0}(\Omega ; S_0)$ with $L_4\neq 0$.
If the functional density $f_{LG}(Q,\D Q)$   satisfies the  coercivity condition, one can prove   existence of a minimizer of the functional  $E_{LG}(Q,\Omega)$ in $W^{1,2}(\Omega; S_0)$. In  2010,   Ball and Majumdar  \cite {BM}  found an example   where for  $Q\in S_0$, the  general Landau-de Gennes energy density  \eqref{LG} with $L_4\neq 0$ does not  satisfy the
coercivity condition.
Very recently,  Golovaty et al. \cite{GNS20} emphasized that ``From the
standpoint of energy minimization, unfortunately, such a version of Landau-de Gennes becomes
problematic, since the inclusion of the cubic term leads to an energy which is unbounded
from below''.    Therefore, the    Landau-de Gennes density (\ref{LG}) causes a knowledge gap between mathematical and physical theories on liquid crystals,  since the energy functional  $E_{LG}(Q,\Omega)$ in $W^{1,2}(\Omega; S_0)$  does not satisfy the coercivity condition and violates the existence  theorem of minimizers   (e.g. \cite{Gi}, \cite{Ba}).  In physics,  concerning the third order term $\frac{L_4}{2} Q_{\alpha\beta}\frac{\p Q_{ij}}{\p  x_\alpha}\frac{\p Q_{ij}}{\p x_\beta}$ with $L_4\neq 0$ in \eqref{LG},  Longa et al.  \cite{LMT} questioned     that ``In the presence of biaxial fluctuations the general third order
theory in $Q_{\alpha\beta}$ becomes unstable and thus is thermodynamically incorrect".
In order to overcome the difficulty, they  extended Landau-de Gennes densities through   22 independent second, third, fourth order terms,
  to preserve the stability of the free energy.  Although their result is very interesting,   all energy densities in \cite{LMT} are complicated and have not addressed the above coercivity problem for  all $Q$-tensors. In 2020, following similar  spirit  in \cite{LMT},    Golovaty et al. \cite{GNS20}  proposed a new physical interpretation  of the density through  fourth order terms to address the above coercivity problem for  all $Q$-tensors.  We would like to point out that  the new  density form in \cite{GNS20}  is completely different  from the original  Landau-de Gennes density \eqref{LG} although the  density  in \cite{GNS20} can recover  the Oseen-Frank density for uniaxial $Q$-tensors.

 In this paper,  we will  propose a new Landau-de Gennes energy density   to solve  the above coercivity problem  with $L_4\neq 0$. At first,  we observe  in Lemma \ref{Lemma 2.1} that for uniaxial tensors $Q\in S_*$,
the original
third order  term  on  $L_4$ in  (\ref {LG}), proposed by
  Schiele and Trimper  \cite{ST}*{p.~268} in physics, is a linear combination of a fourth order term and a second order term in the following:
 \begin{align*}\alabel{third}
	Q_{lk}\frac{\p Q_{ij}}{\p x_l}\frac{\p Q_{ij}}{\p x_k}=\frac{3}{s_+}(Q_{ln}\frac{\p Q_{ij}}{\p x_l})(Q_{kn}\frac{\p Q_{ij}}{\p x_k})-\frac {2s_+}3|\na Q|^2.
	\end{align*}
In the case of $L_4\geq 0$,    we introduce a new  elastic energy density
\begin{align*}\alabel{D}
 	f_{E,1}(Q,\D Q)=&\(\frac{L_1}{2}-\frac{s_+L_4}{3}\)|\D Q|^2
 	+\frac{L_2}{2} \frac{\p Q_{ij}}{\p x_j}\frac{\p Q_{ik}}{\p x_k}
 	\\
 	&+\frac{L_3}{2} \frac{\p Q_{ik}}{\p x_j}\frac{\p Q_{ij}}{\p x_k}+  \frac{3L_4}{2s_+} Q_{ln}Q_{kn}\frac{\p Q_{ij}}{\p x_l}\frac{\p Q_{ij}}{\p x_k}
 \end{align*}for all $Q\in S_0$.
We should point out that the  fourth order term $Q_{ln}Q_{kn}\frac{\p Q_{ij}}{\p x_l}\frac{\p Q_{ij}}{\p x_k}$ in \eqref{D} is a non-negative square term, so    our  new Landau-de Gennes density    \eqref{D} for $Q\in S_0$  satisfies the coercivity condition in mathematics under suitable conditions on $L_1,\cdots, L_4$. The first three terms in \eqref{D} keep  the original  form  \eqref{LG} for $Q\in S_0$ and the new Landau-de Gennes density \eqref{D} is equivalent to the  original density \eqref{LG} for   $Q\in S_*$. We also remark that our  fourth order term $Q_{ln}Q_{kn}\frac{\p Q_{ij}}{\p x_l}\frac{\p Q_{ij}}{\p x_k}$ is a linear combination
 of  three fourth order terms $L^{(4)}_5, L^{(4)}_6, L_7^{(4)}$ in \cite{LMT}; i.e., we verify in Lemma \ref{Lemma 2.1.b} that
 \[Q_{ln}Q_{kn}\frac{\p Q_{ij}}{\p x_l}\frac{\p Q_{ij}}{\p x_k}=\frac{8}{5} L^{(4)}_5- \frac{2}{5}L^{(4)}_6 +\frac{2}{5}L_7^{(4)}.\]
For a  $Q\in W^{1,2}(\Omega, S_0)$, we introduce a new Landau-de Gennes  energy functional
\begin{equation}\label{LG1}
 E_{L}(Q; \Omega )=   \int_{\Omega}f_{LG}(Q,\nabla Q)\,dx =\int_{\Omega}\(  f_{E,1}(Q, \nabla Q) +\frac 1 L \tilde f_B(Q)\)\,dx,
 \end{equation}
 where $f_{E,1}(Q, \nabla Q)$ has the form  \eqref{D},
$\tilde f_B(Q):= f_B(Q)-\min_{Q\in S_0} f_B(Q)\geq 0$ and $L>0$ is a parameter to drive all elastic constants to zero \cites{MN,BPP,Ga}.

 Although there are  many differences between the Oseen-Frank theory and the Landau-de Gennes theory, it is of great interest in mathematics  whether  minimizers of the Landau-de Gennes  energy functional can approach a minimizer  of the Oseen-Frank energy functional.
When $L_2=L_3=L_4=0$ in (\ref{D}),  Majumdar and Zarnescu \cite {MZ} first proved that as $L\to 0$, minimizers $Q_{L}$ of $E_{L}$ converges to  $Q_*=s_+ (u^*\otimes u^*-\frac 13 I)$,  where $Q_*$ is a minimizer of the Dirichlet energy functional in $W^{1,2}_{Q_0}(\Omega ; S_*)$.   Sine then, there exist many developments on the one-constant approximation (c.f. \cite{Ba}) and some special cases of  unequal constants $L_2$, $L_3$, $L_4$ (\cite{BPP}, \cite{IXZ}). In theory of liquid crystals, the general expectation on the elastic constants is that   $L_4$ is not always zero (c.f. \cite{ST}*{p.~268}, \cite{BM}). For the case of $L_4\neq 0$,  we first prove

	\begin{theorem}\label{Theorem 1} Let $L_1$, $L_2$, $L_3$ and $L_4$ be elastic constants satisfying
\begin{align}\label{L}
	&L_1-\frac{s_+L_4}{6}>0, \quad   - L_1-\frac{s_+L_4}{6}<L_3<2 L_1-\frac{s_+L_4}{3},\\
	&\, L_1-\frac{s_+L_4}{6}+\frac53L_2+\frac16L_3>0,\quad L_4\geq 0.\nonumber
\end{align}
 Then, for each $L>0$, $f_{LG}(Q,\nabla Q)$ in \eqref{LG1}  satisfies  the
coercivity condition so that
there exists a minimizer $Q_L$ of the functional (\ref{LG1}) in $W^{1,2}_{Q_0}(\Omega; S_0 )$ with  boundary value $Q_0\in W^{1,2}(\Omega; S_* )$. As $L\to 0$, the minimizers $Q_L$ of   $E_L(Q;\Omega)$
		converge (up to a subsequence) strongly  to  $Q_*$ in $W^{1,2}_{Q_0}(\Omega; S_0)$ and satisfies
\begin{align*}
	\lim_{L\to 0}\frac 1 L\int_{\Omega}  \tilde f_B(Q_L) \,dx=0.
\end{align*}Furthermore,
 $Q_*$ is a  minimizer of  the functional
$E(Q; \Omega):=\int_{\Omega}   f_{E,1}(Q, \nabla Q)\,dx$  for all  uniaxial $Q$-tensors in  $W^{1,2}_{Q_0}(\Omega; S_* )$.
\end{theorem}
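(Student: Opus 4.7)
The plan combines a direct-method argument at each $L>0$ with a test-function bound and a lower-semicontinuity passage as $L\to 0$.

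\emph{Coercivity and existence.} The structural key is that the fourth-order term in \eqref{D} is a sum of squares,
\[
Q_{ln}Q_{kn}\frac{\p Q_{ij}}{\p x_l}\frac{\p Q_{ij}}{\p x_k}=\sum_{i,j,n}\Bigl(\sum_l Q_{ln}\frac{\p Q_{ij}}{\p x_l}\Bigr)^2\ge 0,
\]
and hence contributes non-negatively. Together with $\tilde f_B\ge 0$ (which has $|Q|^4$ growth) and a Longa--Monselesan--Trebin-type check of the remaining $\na Q$-quadratic form under the algebraic inequalities \eqref{L}, one obtains an integrated coercivity estimate $E_L(Q;\Omega)\ge c\,\|\na Q\|_{L^2(\Omega)}^2-C(Q_0)$ with $c,C(Q_0)$ independent of $L$. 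Existence of $Q_L$ then follows by the direct method: a minimizing sequence $\{Q^{(m)}\}$ is bounded in $W^{1,2}_{Q_0}(\Omega;S_0)$, so after extraction $Q^{(m)}\rightharpoonup Q_L$ weakly in $W^{1,2}$ and strongly in $L^p$ for $p<6$. The constant-coefficient quadratic terms and the bulk term are lower semicontinuous by standard convexity/Fatou arguments; for the variable fourth-order term, the coefficient $A_{lk}(Q):=Q_{ln}Q_{kn}$ is positive semi-definite and continuous in $Q$, and $A_{lk}(Q^{(m)})\to A_{lk}(Q_L)$ strongly in $L^q$ for $q<3$, so semicontinuity follows from Ioffe's theorem for Carath\'eodory integrands convex in the gradient variable.

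\emph{Uniform bound and identification of the limit.} I would test with $Q_0$: since $Q_0$ takes values in $S_*$ we have $\tilde f_B(Q_0)=0$, and by \eqref{WnF} together with the equivalence of $f_{E,1}$ and $f_E$ on $S_*$,
\[
E_L(Q_0;\Omega)=\int_\Omega W(u_0,\na u_0)\d x=:M<\infty,
\]
independent of $L$. Hence $E_L(Q_L;\Omega)\le M$, so coercivity gives $\|Q_L\|_{W^{1,2}}\le C$ and $\int_\Omega\tilde f_B(Q_L)\d x\le ML\to 0$. Along a subsequence $Q_L\rightharpoonup Q_*$ in $W^{1,2}_{Q_0}(\Omega;S_0)$, with strong $L^p$ and a.e.\ convergence; Fatou forces $\tilde f_B(Q_*)=0$ a.e., so $Q_*\in W^{1,2}_{Q_0}(\Omega;S_*)$.

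\emph{Minimality and strong convergence.} For any competitor $\tilde Q\in W^{1,2}_{Q_0}(\Omega;S_*)$ we have $\tilde f_B(\tilde Q)=0$, so $E_L(\tilde Q;\Omega)=\int f_{E,1}(\tilde Q,\na \tilde Q)\d x$. The minimality $E_L(Q_L;\Omega)\le E_L(\tilde Q;\Omega)$ combined with the semicontinuity from the first paragraph yields
\[
\int_\Omega f_{E,1}(Q_*,\na Q_*)\d x\le \liminf_{L\to 0}\int_\Omega f_{E,1}(Q_L,\na Q_L)\d x\le \int_\Omega f_{E,1}(\tilde Q,\na \tilde Q)\d x,
\]
proving $Q_*$ is a minimizer of $E(\cdot;\Omega)$ on $W^{1,2}_{Q_0}(\Omega;S_*)$. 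Taking $\tilde Q=Q_*$ upgrades the liminf to a limit and simultaneously forces $L^{-1}\int\tilde f_B(Q_L)\d x\to 0$. Strong $W^{1,2}$ convergence then follows by splitting $f_{E,1}=f^{\mathrm{const}}+f^{\mathrm{var}}$ into its constant-coefficient and $Q$-dependent parts, using the semicontinuity of $\int f^{\mathrm{var}}$ and the convergence of $\int f_{E,1}(Q_L,\na Q_L)$ to extract $\int f^{\mathrm{const}}(\na Q_L)\to\int f^{\mathrm{const}}(\na Q_*)$, and invoking positive-definiteness of $f^{\mathrm{const}}$ together with weak $L^2$ convergence of $\na Q_L$. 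The main obstacle, in my view, is the variable fourth-order coefficient: handling the lower semicontinuity of $\int Q_{L,ln}Q_{L,kn}\p_l Q_{L,ij}\p_k Q_{L,ij}\d x$ when $Q_L Q_L^{\top}$ converges strongly only in the sub-optimal spaces $L^q$, $q<3$, is the crux step, and the sum-of-squares representation displayed in the first paragraph is precisely what makes the standard semicontinuity result apply.
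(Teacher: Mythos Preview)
Your proof is correct and follows essentially the same route as the paper: coercivity via the sum-of-squares structure of the fourth-order term together with the KRSZ-type check on the constant-coefficient quadratic part, existence by the direct method, a uniform energy bound from testing with an $S_*$-valued competitor, and a $\liminf$-squeeze to obtain minimality of $Q_*$, the vanishing of $L^{-1}\int\tilde f_B(Q_L)$, and strong $W^{1,2}$ convergence. The only cosmetic difference is that the paper decomposes $f_{E,1}=\tfrac{\alpha}{2}|\nabla Q|^2+V$ with $V\ge 0$ rather than your $f^{\mathrm{const}}+f^{\mathrm{var}}$ splitting, but the strong-convergence argument (weak convergence plus convergence of the associated positive-definite quadratic norm implies strong convergence) works identically either way.
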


\begin{remark}
	 In the case of $L_4<0$,  for each $Q\in W^{1,2}(\Omega, S_0)$, we  introduce an  elastic energy density  by
\begin{align}
f_{E,-}(Q,\nabla Q):=& \(\frac{L_1}{2}+\frac{s_+L_4}{3}\)|\D Q|^2+\frac{L_2}{2}
\frac{\p Q_{ij}}{\p x_j} \frac{\p Q_{ik}}{\p x_k}+ \frac{L_3}{2} \frac{\p Q_{ik}}{\p x_j}\frac{\p Q_{ij}}{\p x_k}
\\
& -\frac{3}{s_+}L_4\left (|Q|^2|\D Q|^2-Q_{ln}Q_{kn}\frac{\p Q_{ij}}{\p x_l}\frac{\p Q_{ij}}{\p x_k}\right ).\nonumber\end{align}
 It is clear that  $|Q|^2|\D Q|^2-Q_{ln}\frac{\p Q_{ij}}{\p x_l}Q_{kn}\frac{\p Q_{ij}}{\p x_k}\geq 0$ for each $Q\in W^{1,2}(\Omega, S_0)$, and that the elastic energy density  $f_E(Q,\nabla Q)$ in (\ref {LG}) is equal  to 	$f_{E,-}(Q,\nabla Q)$  for uniaxial tensors $Q\in S_*$.

\end{remark}

Next, we discuss   critical points of the  Landau-de Gennes energy functional (\ref{D}) in  $W^{1,2}_{Q_0}(\Omega; S_0)$.
One can write  $f_{E}(Q,\D Q):=\frac {\alpha}2 |\D Q|^2+V(Q, \D Q)$ for some  $\alpha>0$ so that $V(Q, \D Q)\geq 0$ for all $Q\in W^{1,2}_{Q_0}(\Omega; S_0)$. Then,  the    Euler-Lagrange equation  for  the  Landau-de Gennes energy functional \eqref{LG1}  in  $W^{1,2}_{Q_0}(\Omega; S_0)\cap L^{\infty}(\Omega; S_0)$ is

	\begin{align}\label{MDEL}
		&\alpha  \Delta Q +   \frac 12 {\na_k(V_{Q_{x_k}}+  V^T_{Q_{x_ k}})}-\frac 1 3  I \mbox{ tr} (\na_k  V_{Q_{x_k}}) -\frac 1 2 (  V_{Q}  +  V^T_{Q})+   \frac 1 3 I   \mbox{ tr} (V_{Q})
		\\
		=& \frac 1 L \(-aQ - b  \(Q Q -\frac 13I \tr(Q^2)\) +cQ \tr(Q^2)\)\nonumber
	\end{align}
in the weak sense, where  $A^T$  denotes the transpose of $A$,
\[V_{Q}:=\frac {\partial V(Q,\nabla Q)}{\partial Q} \mbox{ and }  V_{Q_{x_k}}:=\frac {\partial V(Q,\nabla Q)}{\partial Q_{x_k}}.\]

  For general elastic constants $L_1,
\cdots$, $L_4$,  we cannot find any reference having an explicit  form of  the
  Euler-Lagrange equation of $E(Q; \Omega)$  for $Q=s_+ (u\otimes u-\frac 13 I)\in S_*$ with $u\in S^2$, so we   give an explicit  form of the
  Euler-Lagrange equation   in the following:
\begin{align*}\alabel{EL}
		&\quad  \alpha \(s_+ \Delta Q  -2 \na_kQ\na_kQ +2 s_+^{-1}(Q+\frac {s_+}3I)|\na Q|^2\)\\
		&+\nabla_k \(
		V_{Q_{x_k}} (Q +\frac {s_+}3 I)
		+(Q +\frac {s_+}3 I)V^T_{Q_{x_k}}
		-2s_+^{-1} (Q +\frac {s_+}3 I)\<Q+\frac {s_+}3 I,\, V_{Q_{x_k}} \>\)
		\\
		& -  V_{Q_{x_k}}\na_kQ
		-\na_k Q V^T_{Q_k}
		+2s_+^{-1}\left[\<V_{Q_k},  \nabla_k Q\>( Q +\frac {s_+}3 I)+ \<V_{Q_{x_k}},  ( Q+\frac {s_+}3 I)\>\nabla_k Q\right]
		\\
		&-  (Q +\frac {s_+}3 I) V^T_{Q}
		- V_{Q} (Q +\frac {s_+}3 I)
		+2s_+^{-1}\<V_{Q}, Q +\frac {s_+}3 I\> (Q +\frac {s_+}3 I)=0,
		\end{align*}
  which is equivalent to  the Oseen-Frank system for $u\in S^2$, where   $\<A, B\>= \tr (B^TA)$ is the standard inner product of two matrices $A$ and $B$. In   the case of  $L_2=L_3=L_4=0$,  the
  Euler-Lagrange equation \eqref{EL} reduces to
 \[s_+\Delta Q-2 \frac {\p Q}{\p x_k}  \frac {\p Q}{\p x_k}+2s_+^{-1}(Q+\frac {s_+}3I)|\na Q|^2=0, \]
which is equivalent to the harmonic map equation of $u$ (c.f. \cite{NZ}).

Since  the Landau-de Gennes theory has been successfully used    for modeling   both uniaxial   and biaxial states of nematic liquid crystals, it is of great interest  whether the  $Q$-tensor type of the Oseen-Frank system can be approximated by the Landau-de Gennes system \eqref{MDEL} without using minimizers.	In general, the problem of the convergence of solutions of the  Landau-de Gennes equation \eqref{MDEL} without using minimizers is  open. Indeed,
Gartland   \cite {Ga} pointed out   that the convergence of solutions of the  Landau-de Gennes equation \eqref{MDEL}  is  similar to the convergence of solutions of  the Ginzburg-Landau approximate equation from superconductivity theory.  The Ginzburg-Landau functional  was introduced in  \cite{GL} to study the phase transition in superconductivity.  For a  parameter $\varepsilon>0$,  the Ginzburg-Landau functional of $u:\Omega\rightarrow \R^3$ is defined by
\begin{equation}\label{GLF}
E_{\varepsilon}(u; \Omega):=\int_{\Omega}\(\frac 12 |\nabla u|^2+\frac{1}{4\varepsilon^2}(1-|u|^2)^2\)\, dx.
\end{equation}
The Euler-Lagrange equation  for the Ginzburg-Landau functional is
\begin{equation}\label{GL}
\Delta u_{\varepsilon}+\frac 1 {\varepsilon^2} u_{\varepsilon}(1-|u_{\varepsilon}|^2)=0.
\end{equation}
 Using the cross product of vectors,
Chen \cite {Chen} first proved that as $\varepsilon\to 0$,  solutions $u_{\varepsilon}$ of the Ginzburg-Landau system (\ref{GL}) weakly converge  to a harmonic map in $W^{1,2}(\Omega;  \R^3)$.  Chen and Struwe \cite{CS} proved global existence of partial regular  solutions to the heat flow of harmonic maps using the Ginzburg-Landau approximation. In  \cite{BBH}-\cite{BBH1},  Bethuel, Brezis and H\'elein obtained many results on  asymptotic behavior for minimizers of $E_{\varepsilon}$ in two dimensions  as $\varepsilon \to 0$ (see also \cite{Struwe}).  Recently, many works (\cite{Ho2}, \cite{HX}, \cite{HLX}, \cite{FHM}) have examined the convergence of  the Ginzburg-Landau approximation for the Ericksen-Leslie system with unequal Frank's constants $k_1, k_2, k_3$. Motivated by the above results on the Ginzburg-Landau approximation, it is natural  to investigate the converging problem on solutions of the Landau-de Gennes system \eqref{MDEL} as $L\to 0$. By comparing with the result of Chen \cite {Chen} (see also \cite{CHH}) on the weak convergence of solutions of the  Ginzburg-Landau equations,   it is  interesting  to study whether  solutions $Q_L$ of the Landau-de Gennes equations (\ref{MDEL}) with uniform bound of the energy  converge   weakly   to a  solution  $Q_*$  of $\eqref{EL}$ in $W^{1,2}_{Q_0}(\Omega; S_0 )$.
 However, it seems  that  the problem is not clear when  $L_2$,  $L_3$, $L_4$ are not zero.
  Under a condition, we  solve this problem  and prove:
	\begin{theorem} \label{Theorem 2}
Let $Q_L$ be a weak solution to the equation \eqref{MDEL} with a uniform bound in $L$.
Assume that the solution $Q_L$
		 converges  strongly to     $Q_*$ in $W^{1,2}_{Q_0}(\Omega; S_0 )$ as $L\to 0$ and satisfies
\begin{align} \label{eq Theorem 3}
	\lim_{L\to 0}\frac 1 L\int_{\Omega}  \tilde f_B(Q_L) \,dx=0.
\end{align}
Then, $Q_*$  is a weak solution to \eqref{EL}.

\end{theorem}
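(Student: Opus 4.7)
The plan is to pass to the limit $L\to 0$ in the weak formulation of \eqref{MDEL} tested against variations that annihilate the singular bulk term exactly, rather than merely in the limit. The main obstacle is the factor $1/L$ on the right-hand side of \eqref{MDEL}: for generic test functions no direct passage is possible because strong $W^{1,2}$ convergence alone gives no control on $\frac{1}{L}\nabla_Q \tilde f_B(Q_L)$. I would exploit the rotational invariance of $\tilde f_B$ to bypass this difficulty.

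First, from $\tilde f_B\geq 0$ (vanishing precisely on $S_*$) combined with hypothesis \eqref{eq Theorem 3} and the a.e.\ convergence $Q_L\to Q_*$ (along a subsequence, obtained from the strong $W^{1,2}$ convergence), Fatou's lemma yields $\tilde f_B(Q_*)=0$ a.e. Hence $Q_*\in W^{1,2}_{Q_0}(\Omega;S_*)$, and we may write $Q_* = s_+(u^*\otimes u^* - \tfrac13 I)$ with $u^*\in W^{1,2}(\Omega;S^2)$.

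Next, for an arbitrary smooth antisymmetric field $P\in C_c^\infty(\Omega;\mathfrak{so}(3))$, I would test \eqref{MDEL} with the commutator $\varphi_L := PQ_L - Q_LP$, which is symmetric, traceless, and compactly supported, hence admissible. Since $\tilde f_B(Q)$ depends on $Q$ only through $\tr(Q^2)$ and $\tr(Q^3)$, cyclicity of the trace gives the pointwise identity
\[
\bigl\langle -aQ_L - b\bigl(Q_L^2 - \tfrac13 I\tr(Q_L^2)\bigr) + cQ_L\tr(Q_L^2),\ PQ_L - Q_LP\bigr\rangle \equiv 0,
\]
so the singular $1/L$ contribution drops out identically. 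With the uniform $L^\infty$ bound on $Q_L$ (either contained in the hypothesis, or derivable via a maximum-principle argument on $|Q_L|^2$ in the spirit of Majumdar--Zarnescu) together with the strong $W^{1,2}$ convergence, the polynomial-in-$Q$ elastic densities $V_Q(Q_L,\nabla Q_L)$ and $V_{Q_{x_k}}(Q_L,\nabla Q_L)$ converge in $L^2$. Integration by parts and dominated convergence then justify the passage $L\to 0$ in the elastic part, yielding
\[
\int_\Omega \bigl(\text{LHS of \eqref{MDEL} at } Q_*\bigr) : (PQ_* - Q_*P)\,dx = 0 \qquad \text{for all } P\in C_c^\infty(\Omega;\mathfrak{so}(3)).
\]

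Finally I would identify this as the weak form of \eqref{EL}. A direct computation gives $PQ_* - Q_*P = s_+(Pu^*\otimes u^* + u^*\otimes Pu^*)$, and since $Pu^*$ ranges freely over smooth compactly supported vector fields into $(u^*)^\perp$ as $P$ varies over $\mathfrak{so}(3)$-valued test fields, these commutators exhaust the $W^{1,2}$ tangent variations to $S_*$ at $Q_*$. Hence the displayed identity is precisely \eqref{EL} in weak form. The most delicate technical step is the convergence of the elastic terms: one must ensure uniform $L^\infty$ control on $Q_L$ in order to handle the cubic-in-$Q$ contributions of $V_{Q_{x_k}}$ arising from the quartic elastic term $\tfrac{3L_4}{2s_+}Q_{ln}Q_{kn}\partial_l Q_{ij}\partial_k Q_{ij}$, and it is here that the careful choice of the new density \eqref{D} (non-negative square form) plays its crucial role.
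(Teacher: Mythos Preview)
Your approach is correct and takes a genuinely different, more elementary route than the paper's. The paper never tests \eqref{MDEL} against commutators; instead it introduces the nearest-point projection $\pi:S_\delta\to S_*$, defines a modified bulk density $F(Q)=\tilde f_B(Q)+|Q-\pi(Q)|^2$ whose Hessian is positive definite near $S_*$, and uses a second-order Taylor expansion to show that the singular term $\tfrac1L\langle\nabla_Q f_B(Q_L),\hat Q_{L,t}-Q_L\rangle$ has a favourable \emph{sign} in the limit (rather than vanishing identically). This forces the construction of an interpolated test field $\hat Q_{L,t}$ patching $\pi(Q_L)_t$ to $Q_{*,t}$ across $S_\delta$, and Egoroff's theorem is then invoked to push the elastic terms through. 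Your commutator trick --- the $SO(3)$-conjugation invariance of $f_B$ giving $\langle g_B(Q_L),[P,Q_L]\rangle\equiv 0$ --- is the $Q$-tensor analogue of Chen's cross-product argument for harmonic maps into spheres and eliminates all of this machinery. What the paper's route buys in exchange for its length is robustness: it uses only that $\tilde f_B$ has a nondegenerate minimum along $S_*$, not any special algebraic symmetry, so it would survive more general bulk potentials.

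A few points need tightening. First, $V_Q(Q_L,\nabla Q_L)$ is quadratic in $\nabla Q_L$ and hence converges only in $L^1$, not $L^2$; this still suffices since it is paired with the uniformly bounded $\varphi_L$. Second, your claim that ``$Pu^*$ ranges freely over smooth compactly supported vector fields into $(u^*)^\perp$'' is inaccurate because $u^*$ is only $W^{1,2}$; what you actually need is that the commutator weak form implies the paper's weak form tested against $\varphi(Q_*,\eta)$ for $\eta\in C_c^\infty(\Omega;S_0)$. This holds by density: for given $\eta$ one has $\varphi(Q_*,\eta)=[P,Q_*]$ with $P=(\eta u^*)^{\tan}\otimes u^*-u^*\otimes(\eta u^*)^{\tan}\in W^{1,2}\cap L^\infty$, and mollifying $P$ (with uniform $L^\infty$ bounds) lets you pass to the limit in your identity. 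Finally, drop the parenthetical about a Majumdar--Zarnescu maximum principle: the paper explicitly notes that no such principle is available for \eqref{MDEL} when $L_2,L_3,L_4\neq 0$; you must rely on the hypothesised uniform bound.
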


For the proof of Theorem \ref{Theorem 2}, we use a concept of a projection $\pi$ in a neighborhood   $S_{\delta}$ of the space $S_*$,
 where
\begin{align*}
	S_\delta:=\left \{Q\in S_0:\quad \dist(Q;S_*)\leq \delta\right\}.
\end{align*}
  For a sufficiently small $\delta >0$, there exists a smooth projection $\pi: S_{\delta}\to S_*$ so that for any $Q\in S_{\delta}$, $\pi(Q)\in S_*$ (c.f.  \cite{CS}). By the projection $\pi$, we  consider the modified bulk energy density \[F(Q):=\tilde f_B(Q)+|Q-\pi(Q)|^2 \]
 so that the Hessian of $F(Q)$ is positive definite for each any $Q\in S_{\delta}$ with a sufficiently small $\delta >0$.  Then, choosing a suitable test function and using \eqref{eq Theorem 3}, we employ Taylor's expansion of $F(Q)$ to cancel the limit term  involving  $\frac1L\na_{Q_{ij}} f_B(Q_L)$. With these results, we divide the domain into three parts and then employ Egoroff's theorem to prove  Theorem \ref {Theorem 2}.

When $L_2=L_3=L_4=0$, Majumdar and Zarnescu  \cite {MZ} first proved  that  minimizers $Q_L$  of $E_{LG}(Q; \Omega)$ uniformly converge  to $Q_*$    away from the singular set of $Q_*$ since there exists a monotonicity
formula for minimizers $Q_L$  of $E_{LG}(Q; \Omega)$ in $W^{1,2}(\Omega, S_0)$. Later, Nguyen and Zarnescu \cite{NZ} improved  the result by proving  local smooth convergence of   minimizers  $Q_L$ away from the singular set of $Q_*$. However, in the general   case of  non-zero elastic constants $L_2$, $L_3$ and $L_4$, there exists no such monotonicity
formula for minimizers $Q_L$ of $E_{LG}(Q; \Omega)$ in $W^{1,2}(\Omega, S_0)$, so it is a very interesting question whether  one can improve the convergence of $Q_L$ for  general cases. For this question,  we    generalize    Nguyen and Zarnescu's result in \cite{NZ} to the case of non-zero elastic constants $L_2, L_3, L_4$  as follows:
\begin{theorem} \label{Theorem 3}
For each $L>0$, let   $Q_L$ be a weak solution to the equation \eqref{MDEL}    and let $Q_*$ be  the limiting map of $Q_L$ in Theorem  \ref{Theorem 2}.    Assume that $Q_L$ is smooth and converges to $Q_*$ uniformly inside $\Omega\backslash \Sigma$, where $\Sigma$ is the singular set of $Q_*$.
 Then,   as $L\to 0$ (up to a subsequence),  we have
	\begin{equation}\label{con1}
	Q_L\to Q_*\text{ in }C^k_\loc(\Omega\backslash\Sigma)
	\end{equation} for any positive integer $k\geq 0$.
\end{theorem}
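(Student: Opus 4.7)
The strategy is a bootstrap argument that exploits the uniform convergence together with the structure of the bulk term near the uniaxial manifold $S_*$. I would fix a compact set $K \subset\subset \Omega\setminus \Sigma$ and nested open neighborhoods $K \subset\subset K_1 \subset\subset K_2 \subset\subset \Omega\setminus\Sigma$. By the assumed uniform convergence, for $L$ sufficiently small $Q_L(x)$ lies in a small tubular neighborhood $S_\delta$ of $S_*$ for every $x \in K_2$, so that the smooth nearest-point projection $\pi\colon S_\delta \to S_*$ is defined. Since $\tilde f_B$ attains its minimum precisely on $S_*$ and its Hessian is positive definite in the directions normal to $S_*$, we have $|H(Q)| \le C|Q-\pi(Q)|$ with $\langle A(\pi(Q))\eta,\eta\rangle \ge c|\eta|^2$ for every $\eta$ orthogonal to $T_{\pi(Q)} S_*$, where $H(Q):=\nabla_Q \tilde f_B(Q)$ and $A$ is the transverse Hessian.

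The main technical step is to establish a uniform Lipschitz bound $\|\nabla Q_L\|_{L^\infty(K_1)} \le C$ independent of $L$. Following the spirit of Nguyen--Zarnescu \cite{NZ}, I would prove this via a small-energy $\varepsilon$-regularity result for \eqref{MDEL}: since $Q_*$ is smooth on $K_2$ and $Q_L \to Q_*$ uniformly there, together with the hypothesis $\frac{1}{L}\int \tilde f_B(Q_L)\,dx \to 0$ used in Theorem \ref{Theorem 2}, the total energy density $e_L := f_{E,1}(Q_L, \nabla Q_L) + \frac{1}{L}\tilde f_B(Q_L)$ has arbitrarily small mass on balls $B_r \subset K_2$ once $L$ and $r$ are small. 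A Bochner-type identity for $e_L$, combined with Moser iteration and the uniform ellipticity of the leading operator guaranteed by \eqref{L}, then produces the pointwise bound on $|\nabla Q_L|$.

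Once $\nabla Q_L$ is bounded on $K_1$, a maximum-principle argument applied to the normal component of $Q_L-\pi(Q_L)$ (on which the operator $A$ is strictly positive) yields the pointwise estimate $|Q_L-\pi(Q_L)|(x) \le CL$ on $K$, so that $\frac{1}{L}|H(Q_L)| \le C$ on $K$. At this point \eqref{MDEL} is a uniformly elliptic quasilinear system with bounded right-hand side and $C^{0,\alpha}$ coefficients (the coefficients depend on $Q_L$, which now has Hölder-continuous limit), and Calderón--Zygmund together with Schauder estimates yield $C^{1,\alpha}(K)$ bounds on $Q_L$. Differentiating \eqref{MDEL} and iterating the Schauder estimate produces uniform $C^{k,\alpha}(K)$ bounds for every $k$; Arzelà--Ascoli plus uniqueness of the limit then upgrades the uniform convergence to $C^k_\loc$ convergence.

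The principal obstacle is the $\varepsilon$-regularity step. In the one-constant case $L_2 = L_3 = L_4 = 0$ treated in \cite{NZ} the Bochner identity is short; here the $Q$-dependent coefficients coming from the new fourth-order term $Q_{ln}Q_{kn}\partial_l Q_{ij}\partial_k Q_{ij}$ produce additional cross terms that have to be absorbed, and one must use the full structural condition \eqref{L} to keep the resulting quadratic form in $\nabla^2 Q_L$ non-negative. This is where the bulk of the work lies; the rest of the argument is a standard elliptic bootstrap.
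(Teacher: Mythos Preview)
Your strategy is the Nguyen--Zarnescu template: a Bochner-type inequality for $e_L$, Moser iteration to a uniform Lipschitz bound, then the pointwise estimate $|Q_L-\pi(Q_L)|\le CL$, then Schauder bootstrap. You correctly flag the Bochner step as the principal obstacle, but this is exactly the place where the one-constant argument does not transfer, and you have not supplied the missing idea. Two concrete issues arise. First, with $L_4\neq 0$ the coefficients $\partial^2_{pp}f_{E,1}$ depend on $Q$, so forming $\Delta e_L$ and substituting the differentiated equation produces cross terms proportional to $(\partial_Q\partial^2_{pp}f_{E,1})\,\nabla Q_L\cdot\nabla^2 Q_L$ whose absorption already requires some smallness of $|\nabla Q_L|$; condition \eqref{L} gives only coercivity of $\partial^2_{pp}f_{E,1}$, nothing about its $Q$-variation. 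Second, and more seriously, the term $\tfrac{1}{L}\nabla^2_Q\tilde f_B(Q_L)[\nabla Q_L,\nabla Q_L]$ has no sign on the tangential part of $\nabla Q_L$ (the Hessian of $\tilde f_B$ vanishes along $T_{\pi(Q)}S_*$), and the error is of order $\tfrac{1}{L}\dist(Q_L,S_*)\,|\nabla Q_L|^2$, which is not bounded by a power of $e_L$ uniformly in $L$. Your subsequent maximum-principle step is also not automatic: the normal space to $S_*$ in $S_0$ is three-dimensional, and when $L_2,L_3,L_4\neq 0$ the equation couples its components to each other and to the tangential motion, so a scalar maximum principle does not directly apply.

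The paper bypasses all of this with a genuinely different argument based on integral estimates. The key device is a rotation $R=R(Q_L)\in SO(3)$ diagonalizing $Q_L$, together with the geometric identity $\nabla(R^TQR)_{ii}=(R^T\nabla Q\,R)_{ii}$ (Lemma~\ref{Rota}). Testing the equation against $\nabla(\phi^2\nabla Q_L)$ and passing through $R$, the bulk contribution becomes $-\tfrac{1}{L}\langle\nabla g_B(\tilde Q_L),\nabla\tilde Q_L\rangle\phi^2$ with $\tilde Q_L=R^TQ_LR$ diagonal; restricted to diagonal traceless matrices the Hessian of $f_B$ is uniformly positive near $Q^+$ (Lemma~\ref{lem f_B}), so this term has a good sign pointwise and one obtains directly the uniform bound $\int|\nabla^2 Q_L|^2\phi^2\le C\int|\nabla Q_L|^2|\nabla\phi|^2$ (Lemma~\ref{lem 2nd est}). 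A Caccioppoli inequality then follows by filling the hole (Lemma~\ref{lem 3nd est}), and Gagliardo--Nirenberg interpolation gives smallness of $\int_{B_r}|\nabla Q_L|^3$ on small balls; this replaces your sought $L^\infty$ bound and is enough to run an induction on $k$ for uniform $W^{k,2}$ estimates (Lemma~\ref{k-ord}). No Bochner inequality, no Moser iteration, and no maximum principle are used.
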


We would like to point out that our proof of Theorem \ref{Theorem 3} is new and different from one in \cite{NZ}.
We   outline main steps  as follows:

Step I.  For each $Q \in S_\delta$, there exists a rotation $R(Q)\in SO(3)$ such that   $\tilde Q=R^T(Q)QR(Q)$ is   diagonal. For any   $\xi \in S_0$, we prove
\begin{align*}\alabel{Hessian}
	\sum_{i,j=1}^3\p_{\tilde Q_{ii}}\p_{\tilde Q_{jj}} f_B(\tilde Q)\xi_{ii}\xi_{jj}\geq \frac{\lambda}{2} ( \xi_{11}^2 +  \xi_{22}^2+ \xi_{33}^2),
\end{align*}
where $\lambda =\min\{3a,s_+ b \}>0$.
   For each smooth $Q(x)\in S_0$, $R^T(Q(x))Q(x)R(Q(x))$ is   diagonal. Then there exists a measure zero set $\Omega_0$ such that $Q(x)$ has a constant multiplicity of eigenvalues inside subdomains of $\Omega\backslash \Omega_0$ and $R(Q(x))$ is almost differentiable in $\Omega$. Using the   geometric identity  \[ \nabla  \( R^T(Q)  Q R (Q)\)_{ii} =\(R^T(Q)\nabla     Q R (Q)\)_{ii}\] with $i=1, 2, 3$,
   we  apply
  \eqref{Hessian} to obtain
\begin{align*}
		&\int_{B_{r_0}(x_0)}  |\D^2 Q_L|^2\phi^2\, dx  \leq  C\int_{B_{r_0}(x_0)}|\D Q_L|^2  |\D\phi|^2\,dx,
	\end{align*}
for  a uniform constant $C$ in $L$, where $\phi$ is a cutoff function in $B_{r_0}(x_0)\subset \Omega\backslash \Sigma$.

Step II.  Using Step I
 with the  technique of `filling hole'  on elliptic systems \cite{Gi},
we  establish   a uniform Caccioppoli  inequality  for solutions $Q_L$ of \eqref{MDLG} in $L$; i.e.,
there exists a uniform constant $C$ independent of $L$ such that
 \begin{align}\label{C1}
		\int_{B_{r/2}(x_0)} |\D Q_L|^2\,dx\leq\frac C{r^2}\int_{B_{r}(x_0)} | Q_L-  Q_{L,x_0, r}|^2\,dx
	\end{align}
	for any $x_0$ with $B_{r_0}(x_0)\subset \Omega\backslash \Sigma$ and any $r\leq r_0$, where  $Q_{L,x_0, r}:=\fint_{B_{r}(x_0)}Q_L\,dx$.

Step III.  Based on the   uniform Caccioppoli inequality \eqref{C1}, we   apply   the well-known Gagliardo-Nirenberg interpolation (c.f. \cite {HM} or \cite{FHM}) to  obtain a control on the local $L^3$-estimate; i.e., there exists a uniform constant $r_0>0$ such that
	\begin{align}\label{key1}
		 \int_{B_{r_0}(x_0)} |\D Q_L|^3\,dx\leq \varepsilon_0
\end{align}
for some small $\varepsilon_0>0$. Then, combining \eqref{key1} with \eqref{Hessian}, we apply an induction method to obtain uniform estimates   on higher derivatives $\nabla^kQ_L$ in $L$ and prove
Theorem \ref{Theorem 2}.

\begin{remark} When $L_4$ is sufficiently small, the $L^p$-theory on the constant elliptic system (c.f. \cite{Gi}) can assure that the  weak solution  $Q_L$ to the equation \eqref{MDEL}    is smooth in $\Omega$.   In the case of $L_4=0$, Contreras and Lamy \cite{CL18} proved that the minimizers $Q_L$  uniformly converge to $Q_*$  in $\Omega\backslash \Sigma$    by assuming that $Q_L$ is uniformly bounded.  \end{remark}

\begin{remark}  In a recent work  \cite{FHM2} with Yu Mei,   we can expand ideas on proofs of Theorems   \ref {Theorem 2}-\ref {Theorem 3} to show that   solutions for the Beris-Edwards system for  biaxial $Q$-tensors  converge smoothly to  the solution of the Beris-Edwards system  for uniaxial $Q$-tensors up to its maximal existence time. \end{remark}

Finally, we make some  remarks about new forms of the Landau-de Gennes  energy density through a strong Ericksen's condition on the Oseen-Frank  density.
Recently,  Golovaty et al.  \cite{GNS20}  proposed a novel form of  the Landau-de Gennes  energy density through the Oseen-Frank  density. In addition to Ericksen's inequalities that \\$k_2\geq |k_4|, k_3\geq 0,  2k_1\geq k_2+k_4$,
 they also assumed that
	\begin{align*}\alabel{GNS}
		k_1 \geq k_2 + k_4,\, k_3 \geq k_2 + k_4,\,k_4 \leq 0.
	\end{align*}
	However, their result is not optimal.  In fact, Golovaty et al.  \cite{GNS20} made a further remark that their assumption \eqref{GNS} could be relaxed if one   includes  more cubic terms in \cite{LMT}, but they did not do it. In  Section 5, we  improve their result to derive an implicit form of $f_E(Q,\D Q)$. Assuming the strong Ericksen inequality with a weaker assumption that $2k_3> k_2+k_4 $  instead of the condition \eqref{GNS}, we  write the   Oseen-Frank  density into a new form, which satisfies the coercivity condition for all $u\in\R^3$.  In \cite{DP}, de Gennes and Prost remarked that the bending constant $k_3$ is much larger than others $k_1$ and  $k_2$. Therefore,    the assumption $2k_3> k_2+k_4 $  is satisfied through a strong  Ericksen's condition.
We give an explicit form of $f_E(Q,\D Q)$ in Proposition \ref{Theorem 4}, which satisfies the coercivity condition for all $Q\in S_0$.

The paper is organized as follows. In Section 2, we prove   Theorem  \ref{Theorem 1}. In Section 3, we prove   Theorem  \ref{Theorem 2}.  In Section 4, we prove Theorem \ref{Theorem 3}.
In Section 5, we obtain new forms of the Landau-de Gennes  energy density through a strong Ericksen's condition.

\section{The coercivity condition and existence of minimizers}
At first, we note
 \begin{lemma} \label{Lemma 2.1}
 For a uniaxial $Q= s_+(u\otimes u-\frac 13 I)\in S_*$ with $u\in S^2$, we have
	 	 \begin{align*}\alabel{Q third}
	 Q_{lk}\frac{\p Q_{ij}}{\p x_l}\frac{\p Q_{ij}}{\p x_k}
= \frac{3}{s_+}(Q_{ln}\frac{\p Q_{ij}}{\p x_l})(Q_{kn}\frac{\p Q_{ij}}{\p x_k})-\frac {2s_+}3|\na Q|^2.
	\end{align*}
	 \end{lemma}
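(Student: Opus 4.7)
The plan is to reduce the identity to the single algebraic fact that a uniaxial $Q$-tensor satisfies a quadratic matrix equation, after which the lemma follows by direct substitution.

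First, I would establish that for $Q=s_+(u\otimes u-\frac13 I)$ with $u\in S^2$, the matrix $Q$ has eigenvalues $\tfrac{2s_+}{3}$ (simple, eigenvector $u$) and $-\tfrac{s_+}{3}$ (double, eigenspace $u^\perp$). Since $Q$ is diagonalizable with minimal polynomial of degree $2$, we obtain
\begin{align*}
\bigl(Q-\tfrac{2s_+}{3}I\bigr)\bigl(Q+\tfrac{s_+}{3}I\bigr)=0,
\qquad\text{i.e.,}\qquad
Q^2=\tfrac{s_+}{3}Q+\tfrac{2s_+^2}{9}I.
\end{align*}
This can equivalently be verified by expanding
$Q_{lj}Q_{jk}=s_+^2(u_lu_j-\tfrac13\delta_{lj})(u_ju_k-\tfrac13\delta_{jk})$
and using $|u|^2=1$, which gives $(Q^2)_{lk}=\tfrac{s_+^2}{3}u_lu_k+\tfrac{s_+^2}{9}\delta_{lk}$; rewriting the $u_lu_k$ term via $s_+u_lu_k=Q_{lk}+\tfrac{s_+}{3}\delta_{lk}$ yields the same quadratic relation.

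Second, because $Q$ is symmetric, $Q_{ln}Q_{kn}=(Q^2)_{lk}$. Substituting the quadratic identity into the right-hand side of \eqref{Q third} gives
\begin{align*}
\frac{3}{s_+}Q_{ln}Q_{kn}\frac{\p Q_{ij}}{\p x_l}\frac{\p Q_{ij}}{\p x_k}
&=\frac{3}{s_+}\Bigl[\tfrac{s_+}{3}Q_{lk}+\tfrac{2s_+^2}{9}\delta_{lk}\Bigr]\frac{\p Q_{ij}}{\p x_l}\frac{\p Q_{ij}}{\p x_k}\\
&=Q_{lk}\frac{\p Q_{ij}}{\p x_l}\frac{\p Q_{ij}}{\p x_k}+\frac{2s_+}{3}|\nabla Q|^2,
\end{align*}
so rearranging produces the claimed identity $Q_{lk}\tfrac{\p Q_{ij}}{\p x_l}\tfrac{\p Q_{ij}}{\p x_k}=\tfrac{3}{s_+}Q_{ln}Q_{kn}\tfrac{\p Q_{ij}}{\p x_l}\tfrac{\p Q_{ij}}{\p x_k}-\tfrac{2s_+}{3}|\nabla Q|^2$.

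There is no real obstacle here: the only conceptual point worth highlighting is that the lemma genuinely uses the uniaxial structure of $Q$, not merely $Q\in S_0$. It is precisely the presence of only two distinct eigenvalues that forces $Q^2$ to lie in the two-dimensional span of $\{Q,I\}$; a generic biaxial $Q\in S_0$ would have three distinct eigenvalues and only satisfy a cubic Cayley–Hamilton relation, in which case no such pointwise rewriting of the cubic $L_4$ term as a (fourth order) $+$ (quadratic) combination is available. This is, in fact, the reason the authors' subsequent density $f_{E,1}$ agrees with $f_E$ only on $S_*$.
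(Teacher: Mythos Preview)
Your proof is correct and follows essentially the same approach as the paper: both arguments establish the algebraic identity $(Q^2)_{lk}=Q_{ln}Q_{kn}=\tfrac{s_+}{3}Q_{lk}+\tfrac{2s_+^2}{9}\delta_{lk}$ for uniaxial $Q$ and then substitute it directly into the right-hand side. The paper obtains this identity by the same explicit expansion you sketch as your alternative verification; your minimal-polynomial framing is a nice conceptual gloss but not a genuinely different route.
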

\begin{proof} Using the fact that $|u|=1$, we have
	\begin{align*}\alabel{ID}
	& Q_{ln}Q_{kn}= s_+^2(u_ku_n-\frac13 \delta_{kn})(u_lu_n-\frac13 \delta_{ln})
	\\
	=&   s_+^2\(u_ku_lu_nu_n-\frac13 \delta_{kn}u_lu_n-\frac13 \delta_{ln}u_ku_n+\frac19 \delta_{kn}\delta_{ln}\)
	\\
	=&s^2\(\frac13u_ku_l+\frac19 \delta_{kl}\) =\frac {s_+}3 s_+(u_ku_l-\frac 13 \delta_{lk})+\frac {2s_+^2}9 \delta_{kl}
	\\
	=&\frac {s_+}3 Q_{kl}+\frac {2s_+^2}9 \delta_{kl}.
	\end{align*}
	Through the identity \eqref{ID}, we obtain
	\begin{align*}
	&Q_{lk}\frac{\p Q_{ij}}{\p x_l}\frac{\p Q_{ij}}{\p x_k}=
	(\frac{3}{s_+}Q_{ln}Q_{kn}-\frac {2s_+}3 \delta_{kl})\frac{\p Q_{ij}}{\p x_l}\frac{\p Q_{ij}}{\p x_k}\\
=&\frac{3}{s_+}(Q_{ln}\frac{\p Q_{ij}}{\p x_l})(Q_{kn}\frac{\p Q_{ij}}{\p x_k})-\frac {2s_+}3|\na Q|^2.
	\end{align*}
	\end{proof}
Recall from Longa  et al. \cite{LMT} that
	\begin{align*}
	L^{(4)}_5:=&Q_{\alpha\rho}Q_{\rho\beta}\frac{\p Q_{\alpha\mu}}{\p x_ \beta}\frac{\p Q_{\mu\nu}}{\p x_\nu},
	\quad
	L^{(4)}_6:=Q_{\alpha\rho}Q_{\rho\beta} \frac{\p Q_{\alpha\mu}}{\p  x_ \mu}  \frac{\p Q_{\beta\nu}}{\p x_ \nu },
	\\
	L^{(4)}_7:=&Q_{\alpha\rho}Q_{\rho\beta}  \frac{\p Q_{\alpha\mu}}{\p x_ \nu}\frac{\p  Q_{\beta\mu}}{\p x_\nu}.
	\end{align*}
Then we have
 \begin{lemma} \label{Lemma 2.1.b} For a uniaxial $Q\in S_*$, we obtain
\begin{align*}\alabel{Q}
	& Q_{ln}Q_{kn}\frac{\p Q_{ij}}{\p x_l}\frac{\p Q_{ij}}{\p x_k}=\frac{8}{5} L^{(4)}_5- \frac{2}{5}L^{(4)}_6 +\frac{2}{5}L_7^{(4)}.
	\end{align*}
\end{lemma}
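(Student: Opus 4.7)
The plan is to substitute the uniaxial ansatz $Q_{ij}=s_+(u_iu_j-\tfrac13\delta_{ij})$ (with $|u|=1$) into each of the four fourth-order invariants and reduce them to a common basis of scalars built from $u$ and $\na u$. Two ingredients do most of the work: the orthogonality $u_i\,\p_\alpha u_i=0$ coming from $|u|^2=1$, and the algebraic identity $Q_{ln}Q_{kn}=\tfrac{s_+}{3}Q_{kl}+\tfrac{2s_+^2}{9}\delta_{kl}$ established in the proof of Lemma \ref{Lemma 2.1}, which lets me rewrite every $Q^2$-contraction as a linear combination of $Q$ and the identity. After these reductions only three scalar invariants will survive, namely
\begin{equation*}
A:=|\na u|^2,\qquad B:=|(u\cdot\na)u|^2,\qquad d:=\div u.
\end{equation*}

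First I compute $\p_\alpha Q_{ij}=s_+(u_{i,\alpha}u_j+u_iu_{j,\alpha})$; together with $u_iu_{i,\alpha}=0$ this yields the clean reductions $\p_l Q_{ij}\,\p_k Q_{ij}=2s_+^2\,u_{i,l}u_{i,k}$, $\p_\nu Q_{\alpha\mu}\,\p_\nu Q_{\beta\mu}=s_+^2(u_{\alpha,\nu}u_{\beta,\nu}+u_\alpha u_\beta A)$, and $\p_\mu Q_{\alpha\mu}=s_+(w_\alpha+u_\alpha d)$, where $w_\alpha:=u_\mu u_{\alpha,\mu}$ satisfies $u\cdot w=0$. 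Inserting the identity from Lemma \ref{Lemma 2.1} and using the elementary scalar identities $Q_{\alpha\beta}u_\alpha u_\beta=\tfrac{2s_+}{3}$, $Q_{\alpha\beta}u_\alpha w_\beta=0$, and $Q_{\alpha\beta}w_\alpha w_\beta=-\tfrac{s_+}{3}B$, each of the four invariants collapses into a linear combination of $A$, $B$, and $d^2$. I anticipate
\begin{align*}
Q_{ln}Q_{kn}\tfrac{\p Q_{ij}}{\p x_l}\tfrac{\p Q_{ij}}{\p x_k}&=\tfrac{2s_+^4}{3}B+\tfrac{2s_+^4}{9}A,\\
L_5^{(4)}&=\tfrac{4s_+^4}{9}B+\tfrac{s_+^4}{9}d^2,\\
L_6^{(4)}&=\tfrac{s_+^4}{9}B+\tfrac{4s_+^4}{9}d^2,\\
L_7^{(4)}&=\tfrac{5s_+^4}{9}A,
\end{align*}
so that in $\tfrac{8}{5}L_5^{(4)}-\tfrac{2}{5}L_6^{(4)}+\tfrac{2}{5}L_7^{(4)}$ the $d^2$ coefficients cancel exactly, the $B$ coefficients combine to $\tfrac{2s_+^4}{3}$, and the $A$ coefficient comes entirely from the $L_7^{(4)}$ term; this matches the left-hand side and closes the proof.

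The argument is purely computational; the main practical obstacle is disciplined index bookkeeping, since the four invariants differ only in which pairs of indices are contracted between the two $\p Q$ factors and the $Q^2$ factor. Once the two orthogonality relations $u\cdot u_{,\alpha}=0$ and $u\cdot w=0$ are applied systematically and the $Q^2$ factor is eliminated via the Lemma \ref{Lemma 2.1} identity, no further ingredient is needed beyond elementary algebra to force the coefficient match.
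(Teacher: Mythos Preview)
Your proof is correct and follows essentially the same route as the paper: substitute the uniaxial ansatz, use the algebraic identity $Q_{ln}Q_{kn}=\tfrac{s_+}{3}Q_{kl}+\tfrac{2s_+^2}{9}\delta_{kl}$ from Lemma~\ref{Lemma 2.1}, and reduce everything to the scalar invariants $|\na u|^2$ and $|(u\cdot\na)u|^2=|u\times\curl u|^2$. The only noteworthy difference is that you compute $L_5^{(4)}$ and $L_6^{(4)}$ individually (picking up the auxiliary variable $d^2=(\div u)^2$, which then cancels in the combination), whereas the paper cites the combination $4L_5^{(4)}-L_6^{(4)}=\tfrac{5s_+^4}{3}|u\times\curl u|^2$ from \cite{LMT}; your version is therefore slightly more self-contained.
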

\begin{proof} Let $Q= s_+(u\otimes u-\frac13 I)$ for $u\in S^2$.
Noting that  $u_i\na u_i =0$,  we calculate
\begin{align*}
	(u\times \curl u)_1^2=&[u_2(\na_1u_2-\na_2u_1)-u_3(\na_3u_1-\na_1u_3)]^2
	\\
	=&(-u_1\na_1u_1-u_2\na_2u_1-u_3\na_3u_1)^2=[(u\cdot \na)u_1]^2.
\end{align*}
Similarly, we can calculate other terms to obtain
\begin{align*}
	\sum_i[(u\cdot \na)u_i]^2=\sum_i(u\times \curl u)_i^2=|u\times \curl u|^2.
\end{align*}
Moreover, we calculate
	\begin{align*}\alabel{Q 3}
	& Q_{lk}\frac{\p Q_{ij}}{\p x_l}\frac{\p Q_{ij}}{\p x_k}=s_+^{3}  (u_lu_k-\frac 13\delta_{lk})\na_l(u_iu_j)\na_k(u_iu_j)
	\\
	=& s_+^{3} (u_lu_k-\frac 13\delta_{lk})(u_j\na_lu_i+u_i\na_lu_j)(u_j\na_ku_i+u_i\na_ku_j)
	\\
	=& s_+^{3} (u_lu_k-\frac 13\delta_{lk})(\na_lu_i\na_ku_i+\na_lu_j\na_ku_j)
	\\
	=&2s_+^{3} \sum_i[( u\cdot \na) u_i]^2-\frac 23s_+^{3} |\na u|^2=2s_+^{3} |u\times \curl u|^2-\frac 23s_+^{3} |\na u|^2.
	\end{align*}
It follows from using \eqref{Q third} and \eqref{Q 3}   that
	\begin{align}\label{Q4}
 Q_{ln}Q_{kn}\frac{\p Q_{ij}}{\p x_l}\frac{\p Q_{ij}}{\p x_k}&=\frac 1{3}s_+ Q_{lk}\frac{\p Q_{ij}}{\p x_l}\frac{\p Q_{ij}}{\p x_k}+  \frac {2s^2_+}9|\na Q|^2\\
	&=\frac 2{3}s_+^{4}|u\times \curl u|^2+\frac {2}9s_+^4|\na u|^2.\nonumber
	\end{align}
We can verify from    \cite{LMT} that
\begin{align*}\alabel{Longa}
4L_5^{(4)}-L_6^{(4)}=\frac {5s_+^4}3|u\times \curl u|^2,\quad L_7^{(4)}=\frac 59s_+^4|\na u|^2.
	\end{align*}
Substituting  \eqref{Longa}  into \eqref{Q4}, we have
	\[Q_{ln}Q_{kn}\frac{\p Q_{ij}}{\p x_l}\frac{\p Q_{ij}}{\p x_k}=\frac{2}{5}(4L^{(4)}_5-L^{(4)}_6)+\frac{2}{5}L_7^{(4)}.\]
	\end{proof}
Under the condition \eqref{L},
one can verify from Lemma 1.2 in  \cite{KRSZ}   that there are two uniform constants $\alpha>0$ and $C>0$ such that the form $f_{E,1}(Q,p)$ also satisfies
\begin{align}\label{CV}
	\frac {\alpha}2 |p|^2&\leq   f_{E,1}(Q,p)\leq C(1+|Q|^2)|p|^2, \quad|\p_Qf_{E,1}(Q,p)|\leq C(1+|Q|)|p|^2
\end{align} for any  $Q\in \mathbb{M}^{3\times 3}$ and   $p \in\mathbb{M}^{3\times 3}\times \R^3$.
Since   $ f_{E,1}(Q,p)$ is
quadratic in $p$ and satisfies \eqref{CV}, it can be checked  (c.f. \cite{HM})  that $  f_{E,1}(Q,p)$ is uniformly convex in $p$; that is
\begin{align}\label{sec2 f_E}
	\p^2_{p^i_{kl}p^j_{mn}} f_{E,1}(Q,p)\xi^i_{kl}\xi^j_{mn}\geq  \frac {\alpha}2 |\xi|^2,  \quad \forall \xi\in\mathbb{M}^{3\times 3}\times \R^3.
\end{align}
	Now we give a proof of Theorem 1.
\begin{proof} Under the condition on $L_1, \cdots, L_4$ in Theorem \ref {Theorem 1},    it is clear from using  Lemma 1.2  in \cite{KRSZ}  that \[f_{E,1}(Q,\nabla Q)\geq   \frac {\alpha}2 |\na Q|^2, \quad \forall Q\in S_0.\]
By the standard theory in the calculus  of  variations (c.f. \cite{Ga}), there exists a  minimizer $Q_{L}$  of $E_L$ in $W_{Q_0}^{1,2}(\Omega ; S_0)$.
For each    $Q\in W^{1,2}_{Q_0}(\Omega ; S_0)$, we set
\[E(Q ;\Omega ):=\int_{\Omega} f_{E,1}(Q,\nabla Q)\,dx.\]
It implies that
\[E(Q_L ;\Omega )+ \int_{\Omega}(f_B(Q_L)-  \inf_{S_0} f_B)\,dx  \leq  E(Q ;\Omega )
\]
for any  $Q\in W^{1,2}_{Q_0}(\Omega ; S_*)$ with the fact that $\tilde f_B(Q)=f_B(Q)-  \inf_{S_0} f_B=0$.

As $L\to 0$, minimizers  $Q_{L}$  converge  (possible passing
subsequence)  weakly to a tensor $Q_*\in W^{1,2}(\Omega ; S_0)$ with that $\tilde f_B( Q_*)=0$, which implies that $Q_*\in S_*$ a.e. in $\Omega$. Then, for any  $Q\in W_{Q_0}^{1,2}(\Omega ; S_*)$, we have
$$E (Q_* ;\Omega )\leq \liminf_{L\to 0}E(Q_{L} ;\Omega ) \leq \limsup_{L\to 0}E(Q_{L} ;\Omega )\leq E(Q;\Omega).
$$
  Therefore $Q_*$ is also a minimizer of $E$ in $W^{1,2}_{Q_0}(\Omega ; S_*)$.
Choosing $Q=Q_*$ in the above inequality, it implies that
\[E (Q_* ;\Omega )=\lim_{L\to 0}E_L(Q_{L} ;\Omega ),\quad \lim_{L\to 0}\frac 1 L\int_{\Omega} \tilde f_B(Q_{L}) \,dx=0.\]
 Moreover, it is known that
\begin{align*}
 \int_{\Omega} |\nabla Q_*|^2\,dx\leq& \liminf_{L\to 0}\int_{\Omega} |\nabla Q_L|^2\,dx ,\\
  \int_{\Omega} V(Q_*, \nabla Q_*)\,dx\leq& \liminf_{L\to 0}\int_{\Omega} V(Q_{L},\nabla Q_L )\,dx.
\end{align*}

It implies that $\int_{\Omega} |\nabla Q_*|^2\,dx= \liminf_{L\to 0}\int_{\Omega} |\nabla Q_L|^2\,dx$. Otherwise, there is a subsequence $L_k\to 0$ such that
\[\int_{\Omega} |\nabla Q_*|^2\,dx< \lim_{L_k\to 0}\int_{\Omega} |\nabla Q_{L_k}|^2\,dx.\]
Then
\begin{align*}
		&
E (Q_* ;\Omega )=\lim_{L_k\to 0}E_{L_k}(Q_{L_k} ;\Omega ),\\
=& \frac {\alpha}2 \lim_{L_k\to 0}\int_{\Omega} |\nabla Q_{L_k}|^2\,dx+ \lim_{L_k\to 0}\int_{\Omega} V(Q_{L_k}, \nabla Q_{L_k})\,dx\\
< &E (Q_* ;\Omega ).
\end{align*}
 This is impossible. Therefore,  minimizers $Q_{L_k}$ converge strongly, up to a subsequence, to a minimizer $Q_*=s_+(u_*\otimes u_*-\frac 1 3 I)$ of $E$ in
$W^{1,2}_{Q_0}(\Omega ; S_0)$. Following from Lemma \ref{Lemma 2.2}, $Q_*$ satisfies \eqref{EL} and $u_*$ is a minimizer of the Oseen-Frank energy in $W^{1,2}(\Omega; S^2)$. Due to the well-known result of Hardt, Kinderlehrer and Lin \cite{HKL}, $u_*$ is partially regular in $\Omega$ (see also \cite{Ho1}). Thus $Q_*$ is partially regular.
\end{proof}
	
\begin{lemma}
	If $Q$ is a minimizer of $E_{L}(Q;\Omega)$ from \eqref{LG1} in $W^{1,2}_{Q_0}(\Omega; S_0)$, it   satisfies
	\begin{align*}
	&- \alpha  \Delta Q_{ij}-\frac 1 2  \na_k (V_{p^k_{ij}}+ V_{p^k_{ji}})+\frac 1 3 \delta_{ij}\sum_{l=1}^3 \nabla_k V_{p^k_{ll}}  +\frac 1 2 (V_{Q_{ij}}  +V_{Q_{ji}})-   \frac 1 3 \delta_{ij}\sum_{l=1}^3 V_{Q_{ll}}  \\& + \frac 1 L \(-aQ_{ij}- b  (Q_{ik}Q_{kj}-\frac 13\delta_{ij}\tr(Q^2) )+cQ_{ij}\tr(Q^2)\)=0
	\end{align*}
	in the weak sense, where 	$V_{p^k_{ij}}:=V_{p^k_{ij}}(Q,p)$ with $p=(\nabla_k Q_{ij})$.
\end{lemma}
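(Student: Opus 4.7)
The plan is to derive the Euler--Lagrange system by a standard first variation, with the extra care that admissible variations must lie in the constrained space $S_0$ of symmetric traceless tensors. Write $f_{LG}(Q,p) = \frac{\alpha}{2}|p|^2 + V(Q,p) + \frac{1}{L}\tilde f_B(Q)$, and extend each of $V$, $f_B$ smoothly from $S_0$ to all of $\mathbb{M}^{3\times 3}$ so that partial derivatives $V_{Q_{ij}}$, $V_{p^k_{ij}}$ and $\partial_{Q_{ij}} f_B$ make sense componentwise.

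For any test function $\varphi \in C_c^\infty(\Omega;S_0)$, the perturbation $Q + t\varphi$ stays in the admissible class, so minimality gives
\begin{equation*}
0 \;=\; \left.\tfrac{d}{dt}\right|_{t=0} E_L(Q+t\varphi;\Omega) \;=\; \int_\Omega \Big[f_{LG,Q_{ij}}\,\varphi_{ij} + f_{LG,p^k_{ij}}\,\partial_k\varphi_{ij}\Big]\,dx.
\end{equation*}
Integration by parts, legitimate because $\varphi$ is compactly supported, yields
\begin{equation*}
\int_\Omega A_{ij}\,\varphi_{ij}\,dx = 0, \qquad A_{ij} := f_{LG,Q_{ij}} - \partial_k f_{LG,p^k_{ij}},
\end{equation*}
in the sense of distributions.

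The main point is then the projection onto the tangent space of $S_0$. Because $\varphi$ is symmetric and traceless, the equation $\int A_{ij}\varphi_{ij}\,dx=0$ is equivalent to the vanishing of the $S_0$-projection of $A$, namely
\begin{equation*}
\tfrac{1}{2}(A_{ij}+A_{ji}) - \tfrac{1}{3}\delta_{ij}\sum_{l=1}^3 A_{ll} = 0.
\end{equation*}
Expanding $A$ using $f_{LG,p^k_{ij}} = \alpha\partial_k Q_{ij} + V_{p^k_{ij}}$ and $f_{LG,Q_{ij}} = V_{Q_{ij}} + \tfrac{1}{L}\partial_{Q_{ij}}f_B$, and noting that the trace of the Laplacian term drops out automatically since $\Delta(\tr Q)=0$, this produces every elastic term in the claimed identity.

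It remains to compute the bulk contribution. Direct differentiation gives $\partial_{Q_{ij}} f_B = -aQ_{ij} - bQ_{ik}Q_{kj} + cQ_{ij}\tr(Q^2)$, which is already symmetric in $(i,j)$. Its trace is $\sum_l \partial_{Q_{ll}} f_B = -a\,\tr Q - b\,\tr(Q^2) + c\,\tr(Q^2)\,\tr Q = -b\,\tr(Q^2)$ using $\tr Q=0$. Subtracting $\tfrac{1}{3}\delta_{ij}$ times this trace produces exactly the bulk expression stated in the lemma. There is no serious obstacle: the only genuinely non-mechanical step is recognizing that the symmetric–traceless constraint is enforced at the level of the Euler--Lagrange equation by the projection $A \mapsto \tfrac{1}{2}(A+A^T) - \tfrac{1}{3}I\,\tr A$; the rest is bookkeeping.
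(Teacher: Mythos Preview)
Your proof is correct and follows essentially the same approach as the paper: both take the first variation along $Q+t\varphi$ with $\varphi\in C_c^\infty(\Omega;S_0)$, integrate by parts, and then exploit the symmetry and tracelessness of $\varphi$ to insert the symmetrization $\tfrac12(\cdot+\cdot^T)$ and the $-\tfrac13\delta_{ij}\tr(\cdot)$ correction terms. Your phrasing of this last step as the orthogonal projection $A\mapsto\tfrac12(A+A^T)-\tfrac13 I\,\tr A$ onto $S_0$ is a clean conceptual way to say exactly what the paper does by direct manipulation.
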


\begin{proof}
For any test function $\phi\in C^\infty_0(\Omega; S_0)$, consider $Q_t:=Q+t\phi$ for $t\in \R$.
Then for all $\phi\in C^\infty_0(\Omega; S_0)$, we calculate
\begin{align*}
&\left.\int_\Omega \frac{d}{d t}\(\tilde f_{E,1}(Q_t,\na Q_t)+\frac 1L\tilde f_B(Q_t)\)\right|_{t=0}\d x
\\
=&\int_\Omega \alpha  \frac{\p Q_{ij}}{\p x_k}\frac{\p \phi_{ij}}{\p x_k}+ V_{p^k_{ij}} \frac{\p \phi_{ij}}{\p x_k}+V_{Q_{ij}} \phi_{ij}\d x
\\
&+ \frac 1 L\int_\Omega  -aQ_{ij}\phi_{ij}-  b  Q_{ik}Q_{kj}\phi_{ij} +c(Q_{ij}\tr(Q^2)\phi_{ij})\d x
\\
=&\int_\Omega\( -\alpha  \Delta Q_{ij} -\frac 1 2\frac{\p }{\p x_k}(V_{p^k_{ij}} + V_{p^k_{ji}})
+\frac 12 (V_{Q_{ij}} +V_{Q_{ji}}) \)\phi_{ij}\d x \\
&+ \frac 1 L\int_\Omega  \(-aQ_{ij}- b  Q_{ik}Q_{kj} +cQ_{ij}\tr(Q^2)\)\phi_{ij}\d x\\
=& \int_\Omega\( -\alpha  \Delta Q_{ij} -\frac 1 2 \nabla_k(V_{p^k_{ij}}+  V_{p^k_{ji}}) -\frac 1 3 \delta_{ij}\sum_l \nabla_k V_{p^k_{ll}}
  \)\phi_{ij}\d x
\\
& +\int_\Omega\(\frac 12 (V_{Q_{ij}} +V_{Q_{ji}}) - \frac 1 3 \delta_{ij}\sum_l V_{Q_{ll}}  \)\phi_{ij}\d x
\\
& + \frac 1 L\int_\Omega  \(-aQ_{ij}- b  (Q_{ik}Q_{kj}-\frac 1 3 \delta_{ij}\tr(Q^2) ) +cQ_{ij}\tr(Q^2)\)\phi_{ij}\d x
=  0,
\end{align*}
where we used the fact that $\phi$ is traceless.
 This proves our claim.
\end{proof}

In the case of $L_2=L_3=L_4=0$,  Majumdar and Zarnescu  \cite {MZ}  proved that the weak solution of \eqref{MDEL} is  bounded by using a maximum principle. However, when $L_2$, $L_3$, $L_4$ are non-zero, the  system \eqref{MDEL} is a nonlinear elliptic system, so there exists no such maximum principle for it (e.g. \cite {Ga}, \cite{Giu}). Therefore, it is not clear   whether each minimizer $Q_L$ of $E_{L}(Q; \Omega )$   in $W_{Q_0}^{1,2}(\Omega, S_0)$ is bounded and the energy density $f_{E,1}(Q, \nabla Q)$ in (\ref {LG1})   can be bounded above by $C|\nabla Q|^2 +C$. Without this above growth condition on the density,  it is  a well-known fact that a minimizer $Q_L$ of  the   Landau-de Gennes  energy functional in $W^{1,2}_{Q_0}(\Omega; S_0 )$ may not satisfy the Euler-Lagrange equation in $W^{1,2}(\Omega, S_0)$. To overcome this difficulty, we can introduce a smooth cutoff function $\eta (r)$ in $[0, \infty )$ so that $\eta (r)=1$ for $r\leq M$ with a very large constant $M>0$ and $\eta (r)=0$ for $r\geq M+1$.
Then for each $Q\in W^{1,2}(\Omega, S_0)$, one can modify the Landau-de Gennes  density by
\begin{equation}\label{MD}
\widetilde f_E(Q, \nabla Q):= \frac {\alpha}2 |\nabla Q|^2 +\tilde V(Q, \nabla Q)=  \frac {\alpha}2|\nabla Q|^2 +\eta (|Q|) V(Q, \nabla Q)
 \end{equation}
with the property that
\[ \frac {\alpha}2  |\nabla Q|^2\leq \widetilde f_E(Q, \nabla Q)\leq C|\nabla Q|^2. \]
For a large $M>0$ in (\ref{MD}),  we consider a modified   Landau-de Gennes functional
\begin{equation}\label{MDLG}
\tilde E_{L}(Q; \Omega )=\int_{\Omega}\( \widetilde f_E(Q, \nabla Q) +\frac 1 L \tilde f_B(Q)\)\,dx.
\end{equation}
Then we obtain
\begin{lemma}\label{lem uniform bound}
Let $Q_L$ be a weak solution to the equation (\ref{MDEL}) with the boundary value $Q_0\in W^{1,2}(\Omega ; S_*)$ associated to  the functional $\tilde f_E(Q,\D Q)$ in \eqref{MD}. Then,  $|Q_L|\leq M+1$ for a sufficient large $M$.
\end{lemma}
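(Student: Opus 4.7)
My plan is to argue by a subsolution/test-function method that exploits a critical feature of the cutoff in \eqref{MD}: not only does $\tilde V(Q,\na Q)$ vanish for $|Q|\ge M+1$, but so do both of its first derivatives $\tilde V_Q$ and $\tilde V_{p_k}$, because the smooth cutoff satisfies $\eta(r)=\eta'(r)=0$ for $r\ge M+1$. Thus on the superlevel set $\{|Q_L|>M+1\}$ the modified Euler--Lagrange system \eqref{MDEL} collapses to the pure semilinear equation $\alpha \Delta Q_L = \tfrac{1}{L}f_B'(Q_L)$ in the distributional sense.

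I choose $M$ large enough that (i) $M+1>\sqrt{2/3}\,s_+=|Q_0|$, so the boundary trace stays strictly below the cutoff level, and (ii) for every $Q\in S_0$ with $|Q|\ge M+1$,
\[
\<f_B'(Q),Q\>=c|Q|^4-a|Q|^2-b\,\tr(Q^3)>0,
\]
which is possible since $|\tr(Q^3)|\le C|Q|^3$ on $S_0$, so the quartic term dominates. Set $w:=(|Q_L|^2-(M+1)^2)_+$ and its bounded truncation $w_k:=\min(w,k)$ for $k>0$. The test function $\zeta:=Q_L w_k$ is symmetric and traceless, and it lies in $W^{1,2}_0(\Omega;S_0)$ because $w_k$ is bounded by $k$ and $\na w_k$ is supported on the set $\{(M+1)^2<|Q_L|^2\le(M+1)^2+k\}$ where $|Q_L|$ is controlled. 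Plugging $\zeta$ into the weak form of \eqref{MDEL}, and noting that $\zeta$ is supported in $\{|Q_L|>M+1\}$ where $\tilde V_Q$ and $\tilde V_{p_k}$ both vanish, the entire potential contribution drops out, leaving
\[
\alpha \int_\Omega \na Q_L : \na(Q_L w_k) \,dx + \frac{1}{L}\int_\Omega \<f_B'(Q_L),Q_L\>\, w_k \,dx = 0.
\]

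Using $\na |Q_L|^2 = 2\,\<Q_L,\na Q_L\>$ together with the chain rule for truncations, I expand the first integrand as $w_k|\na Q_L|^2 + \tfrac12|\na w_k|^2$ a.e. All three resulting integrands are pointwise nonnegative by the choice of $M$, so each must vanish. In particular $\int_\Omega |\na w_k|^2 \,dx = 0$, which forces $w_k$ to be constant on $\Omega$; combined with the trace $w_k|_{\p\Omega}=0$, this gives $w_k\equiv 0$, and sending $k\to\infty$ yields $w\equiv 0$, i.e.\ $|Q_L|\le M+1$ a.e. The main technical obstacle is ensuring that $\zeta=Q_L w_k$ is admissible in $W^{1,2}_0(\Omega;S_0)$ when we have no a priori $L^\infty$ control on $Q_L$; this is precisely why we introduce the truncation $w_k$ rather than taking $w$ itself as the weight, and then pass to the limit $k\to\infty$.
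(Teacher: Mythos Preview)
Your proof is correct and follows essentially the same approach as the paper: both test the weak equation against $Q_L$ times a nonnegative weight supported on $\{|Q_L|>M+1\}$ (the paper uses $(1-\tfrac{M+1}{|Q_L|})_+$, you use the truncated $w_k$), observe that the $\tilde V$ contributions vanish there, and conclude from the nonnegativity of the remaining Dirichlet and bulk terms. Your version is in fact slightly more careful than the paper's, since you make explicit both why $\tilde V_Q,\tilde V_{p_k}$ vanish (via $\eta=\eta'=0$) and why the test function is admissible (via the extra truncation by $k$).
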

\begin{proof} Recall from the definition of $\tilde f_E(Q,\D Q)$ in (\ref{MD}) that for a $Q\in S_0$ with $|Q|\geq M+1$,
\[\tilde f_E(Q, \nabla Q)=\frac {\alpha} 2|\nabla Q|^2.\]
Similarly to one in \cite {CHH}, choose  a test function $\phi =Q (1-\min\{1, \frac {M+1}{|Q|}\})$. Multiplying (\ref{MDEL})   by the test function $\phi$, we have
\begin{align*}
	&\alpha  \int_{|Q|\geq M+1}|\nabla Q|^2 (1- \frac {M+1}{|Q|})-(M+1) Q_{ij} \na_k Q_{ij} \nabla_k  \frac 1{|Q|})\,dx
 \\& + \frac 1 L \int_{|Q|\geq M+1} \(-a|Q|^2- b  Q_{ik}Q_{kj} Q_{ij} +c|Q|^4\)
 (1- \frac {M+1}{|Q|}) \,dx=0.
	\end{align*}
Note the fact that $  \nabla_k |Q|^2=2Q_{ij} \na_k Q_{ij} $. The above second term is non-negative.
For a sufficiently large $M>0$, the third term is positive. This implies that the set $\{|Q|\geq M+1\}$ is empty; i.e.,
$|Q|\leq M+1$ a.e. in $\Omega$.
\end{proof}

The following result  is a variant result of Giaquinta-Giusti \cite{GG}  (see more details in page 206 of \cite{Gi}):
\begin{prop}\label{{Pro}}  For each $L>0$, let  $Q_L$  be a  bounded minimizer of \eqref{MDLG}
		 in $W^{1,2}_{Q_L}(\Omega; S_0 )$. Then there exists an open set $\Omega_L\subset \Omega$ such that $Q_L\in C_{loc}^{1, \alpha }(\Omega \backslash \Omega_L)$ for each $\alpha<1$. Moreover, there is a small constant $\varepsilon_0$ independent of $Q_L$ such that
\[ \Sigma_L:=\Omega\backslash \Omega_L=\left \{x_0\in \Omega:  \, \liminf_{R\to 0}\int_{B_r(x_0)}|\nabla Q_L|^2\,dx >\varepsilon_0\right \}\]
and the Hausdorff measure $\mathcal H^{q}(\Sigma_L )=0$ with $0<q<1$.
\end{prop}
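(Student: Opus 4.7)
The plan is to show that the modified functional \eqref{MDLG} satisfies the structural hypotheses of the classical Giaquinta-Giusti partial regularity theorem for quadratic-growth variational integrals, and then to apply their $\varepsilon$-regularity scheme essentially verbatim.

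First I would verify the structure of the integrand $F(Q,p):=\widetilde f_E(Q,p)+\tfrac{1}{L}\widetilde f_B(Q)$. By Lemma \ref{lem uniform bound}, any bounded minimizer $Q_L$ satisfies $|Q_L|\leq M+1$, so the cutoff $\eta(|Q|)$ and the polynomial bulk $\widetilde f_B$ are smooth with bounded derivatives on the slab $\{|Q|\leq M+1\}$. Together with the uniform convexity \eqref{sec2 f_E}, the density $F(Q,p)$ is $C^2$ in $(Q,p)$, uniformly strictly convex in $p$, and satisfies a two-sided quadratic bound $\tfrac{\alpha}{2}|p|^2\leq F(Q,p)\leq C(1+|p|^2)$, with all derivatives $F_{pp}$, $F_{pQ}$, $F_{QQ}$ Hölder continuous in $Q$ on that slab. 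These are exactly the hypotheses of Giaquinta-Giusti for minimizers with quadratic growth.

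Next I would carry out the standard $\varepsilon$-regularity argument. For $x_0\in\Omega$ and $r$ small, compare $Q_L$ on $B_r(x_0)$ with the solution $h$ of the constant-coefficient linear elliptic system obtained by freezing $F_{pp}$ at the mean values $(Q_L)_{x_0,r}$ and $(\nabla Q_L)_{x_0,r}$. The minimality of $Q_L$, a Caccioppoli inequality, and interior Campanato estimates for $h$ combine to give, whenever
\[ U(x_0,r):=r^{2-n}\int_{B_r(x_0)}|\nabla Q_L-(\nabla Q_L)_{x_0,r}|^2\,dx\leq\varepsilon_0, \]
an excess decay of the form $U(x_0,\tau r)\leq C\tau^2 U(x_0,r)+C r^{2\gamma}$ for some $\gamma>0$ and all $\tau\in(0,1/2)$. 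Iteration together with Campanato's characterization of Hölder spaces yields $Q_L\in C^{0,\beta}_{\loc}$ near $x_0$, and a standard Schauder bootstrap on the linearized equation then upgrades this to $Q_L\in C^{1,\alpha}_{\loc}$ for every $\alpha<1$.

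Finally, setting
\[ \Omega_L:=\Bigl\{x_0\in\Omega:\;\liminf_{r\to 0}\,r^{2-n}\int_{B_r(x_0)}|\nabla Q_L|^2\,dx<\varepsilon_0\Bigr\}, \]
the $\varepsilon$-regularity step shows $\Omega_L$ is open and yields the characterization of $\Sigma_L=\Omega\backslash\Omega_L$ stated in the proposition. For the Hausdorff dimension, the additional ingredient is a reverse Hölder inequality for $\nabla Q_L$ derived from Caccioppoli through Gehring's lemma, giving $\nabla Q_L\in L^p_{\loc}$ for some $p>2$. A standard covering argument then produces some $q\in(0,1)$ with $\mathcal H^q(\Sigma_L)=0$. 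The only real obstacle is bookkeeping: verifying that $\eta(|Q|)V(Q,\nabla Q)$ has the uniform Hölder dependence on $Q$ required for the freezing-coefficient comparison to close. Lemma \ref{lem uniform bound} together with the smoothness of $\eta$ reduces this to a routine check, which is precisely why the authors present the statement merely as a variant of the classical Giaquinta-Giusti result.
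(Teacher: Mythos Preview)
Your proposal is correct and is precisely the approach the paper intends: the authors do not give a proof but simply cite the Giaquinta--Giusti partial regularity theorem (see \cite{GG} and \cite{Gi}, p.~206), and your outline---verifying the quadratic-growth and uniform-convexity hypotheses via Lemma~\ref{lem uniform bound} and \eqref{sec2 f_E}, running the freezing-coefficient $\varepsilon$-regularity and excess-decay argument, and invoking Gehring's higher integrability for the dimension estimate---is exactly the content of that reference. One minor remark: the scaling factor $r^{2-n}=r^{-1}$ you insert in the definition of $\Omega_L$ is necessary in dimension three (its absence in the displayed formula of the proposition is a typo), so your version is the correct one.
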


\section{Proof of Theorem  \ref{Theorem 2}}
 At first, let us  recall that for a  uniaxial tensor    $Q\in W^{1,2}_{Q_0}(\Omega; S_* )$,  its energy is given by
\[E(Q; \Omega):=\int_{\Omega}   f_{E}(Q, \nabla Q)\,dx,\]
where $f_{E}(Q, \nabla Q)=\frac {\alpha}2 |\D Q|^2+V(Q, \D Q)$. Then we have
\begin{lemma}\label{Lemma 2.2} If $Q$ is a minimizer of $E(Q; \Omega)$ in $W^{1,2}_{Q_0}(\Omega; S_*)$, it   satisfies
\begin{align*}
		&
		\quad   \alpha \(- s_+\Delta Q_{ij} +2 \na_kQ_{il}\na_kQ_{jl} -2 s_+^{-1}(Q_{ij}+\frac {s_+}3\delta_{ij})|\na Q|^2\)\\
		&-\nabla_k \(
	(Q_{jl}+\frac {s_+}3 \delta_{jl})V_{p^k_{il}}
	+(Q_{il}+\frac {s_+}3 \delta_{il})V_{p^k_{jl}}
  -2s_+^{-1} (Q_{ij}+\frac {s_+}3 \delta_{ij})(Q_{lm}+\frac {s_+}3 \delta_{lm})V_{p^k_{lm}} \)
		\\
		& + V_{p^k_{il}}\na_kQ_{jl}
	+ V_{p^k_{jl}}\na_k Q_{il}
  -2s_+^{-1}V_{p^k_{lm}}\(\nabla_k Q_{lm} ( Q_{ij}+\frac {s_+}3 \delta_{ij})+( Q_{lm}+\frac {s_+}3 \delta_{lm})\nabla_k Q_{ij}\)
		\\
&+V_{Q_{il}}(Q_{jl}+\frac {s_+}3 \delta_{jl})
	+V_{Q_{jl}}(Q_{il}+\frac {s_+}3 \delta_{il})
  -2s_+^{-1}V_{Q_{lm}}(Q_{lm}+\frac {s_+}3 \delta_{lm}) (Q_{ij}+\frac {s_+}3 \delta_{ij})
\\
		& =0
	\end{align*}
	in the weak sense.
	\end{lemma}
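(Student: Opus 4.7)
The strategy is to compute the first variation of $E$ along admissible curves inside $S_*$, which I parametrize through the sphere $S^2$. Since $Q = s_+(u\otimes u-\tfrac{1}{3}I)$ for some $u\in W^{1,2}(\Omega;S^2)$, for any $\phi\in C_0^\infty(\Omega;\R^3)$ I set $\tilde\phi_i := \phi_i-(u\cdot\phi)u_i$ (so that $\tilde\phi\perp u$) and let $u_t := (u+t\tilde\phi)/|u+t\tilde\phi|\in S^2$. The associated variation $Q_t = s_+(u_t\otimes u_t-\tfrac{1}{3}I)$ stays in $S_*$, and a short computation at $t=0$ gives
\begin{equation*}
\delta Q_{ij} := \left.\tfrac{d}{dt}Q_{t,ij}\right|_{t=0} = s_+\bigl(\phi_iu_j+u_i\phi_j-2(u\cdot\phi)u_iu_j\bigr).
\end{equation*}

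The first variation condition $\frac{d}{dt}E(Q_t)|_{t=0}=0$ gives the weak identity $\int A_{ij}\delta Q_{ij}\,dx=0$ with $A_{ij} := -\alpha\Delta Q_{ij} - \nabla_k V_{p^k_{ij}} + V_{Q_{ij}}$. Inserting the formula for $\delta Q_{ij}$ and collecting the coefficient of the free test field $\phi_i$ produces the weak vector equation
\begin{equation*}
(A_{ij}+A_{ji})u_j - 2u_i A_{lm}u_lu_m = 0,
\end{equation*}
which geometrically says that $Au$ is parallel to $u$, i.e.\ the unconstrained Euler--Lagrange residual is normal to $S_*$ at $Q$. To cast this as a symmetric traceless tensor equation, I lift by multiplying by $u_l$ and symmetric-tracelessifying,
\begin{equation*}
A_{ij}u_ju_l + A_{lj}u_ju_i - 2u_iu_l A_{mn}u_mu_n = 0,
\end{equation*}
which is equivalent to the vector form (contracting with $u_l$ recovers it).

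Next, every factor $u_au_b$ is converted into $s_+^{-1}(Q_{ab}+\tfrac{s_+}{3}\delta_{ab})$, a key identity on $S_*$. The $V_{Q}$-group then matches the last line of Lemma \ref{Lemma 2.2} directly. The $V_{p^k}$-group is rearranged by the product rule: $-\nabla_k[(Q_{jl}+\tfrac{s_+}{3}\delta_{jl})V_{p^k_{il}}]$ expands into $-(Q_{jl}+\tfrac{s_+}{3}\delta_{jl})\nabla_kV_{p^k_{il}} - \nabla_kQ_{jl}\,V_{p^k_{il}}$, and the second piece is precisely cancelled by the compensating term $+V_{p^k_{il}}\nabla_kQ_{jl}$ appearing in the statement; the remaining two $V_{p^k}$-contributions are handled identically.

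The main computational step is the $\alpha$-group, which after the $u\otimes u\to s_+^{-1}(Q+\tfrac{s_+}{3}I)$ substitution reads $-\alpha[\Delta Q_{il}(Q_{jl}+\tfrac{s_+}{3}\delta_{jl}) + \Delta Q_{jl}(Q_{il}+\tfrac{s_+}{3}\delta_{il}) - 2s_+^{-1}(Q_{ij}+\tfrac{s_+}{3}\delta_{ij})(Q_{lm}+\tfrac{s_+}{3}\delta_{lm})\Delta Q_{lm}]$. To reduce this to the first line of Lemma \ref{Lemma 2.2}, I will exploit two algebraic facts valid on $S_*$: the Cayley--Hamilton-type relation $Q_{il}Q_{jl} = \tfrac{s_+}{3}Q_{ij}+\tfrac{2s_+^2}{9}\delta_{ij}$, whose Laplacian gives
$\Delta Q_{il}Q_{jl}+Q_{il}\Delta Q_{jl}+2\nabla_kQ_{il}\nabla_kQ_{jl} = \tfrac{s_+}{3}\Delta Q_{ij}$, and the constancy of $|Q|^2=\tfrac{2s_+^2}{3}$ together with $\tr Q=0$, which yield $Q_{lm}\Delta Q_{lm}=-|\nabla Q|^2$ and $\Delta Q_{ll}=0$. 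Substituting these, the $\alpha$-group collapses exactly to $\alpha(-s_+\Delta Q_{ij}+2\nabla_kQ_{il}\nabla_kQ_{jl}-2s_+^{-1}(Q_{ij}+\tfrac{s_+}{3}\delta_{ij})|\nabla Q|^2)$, matching the stated equation. The only bookkeeping subtlety to watch is that $V_{p^k_{ij}}$ and $V_{Q_{ij}}$ are symmetric in $ij$ because $V(Q,p)$ depends on symmetric arguments, so no Lagrange-multiplier terms from the trace constraint appear.
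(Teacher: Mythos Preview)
Your approach is correct and leads to the same equation, but it follows a different path from the paper's proof. The paper works directly with a \emph{tensor} test function $\eta\in C_0^\infty(\Omega;S_0)$: it sets $\phi_i:=u_k\eta_{ik}$ and computes
\[
\left.\frac{dQ_{t;ij}}{dt}\right|_{t=0}=(Q_{jl}+\tfrac{s_+}{3}\delta_{jl})\eta_{il}+(Q_{il}+\tfrac{s_+}{3}\delta_{il})\eta_{jl}-2s_+^{-1}(Q_{ij}+\tfrac{s_+}{3}\delta_{ij})(Q_{lm}+\tfrac{s_+}{3}\delta_{lm})\eta_{lm},
\]
as well as its gradient, and then integrates by parts once. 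Because $\delta Q$ is already expressed in terms of $Q$ and $\eta$, the three groups in the lemma (the $\alpha$-line, the $V_{p^k}$-group, and the $V_Q$-group) drop out of the computation directly, with no need to pass through a vector equation for $u$ or to invoke the Cayley--Hamilton--type identity $Q_{il}Q_{jl}=\tfrac{s_+}{3}Q_{ij}+\tfrac{2s_+^2}{9}\delta_{ij}$. Your route---deriving the $S^2$-level vector equation, lifting it by tensoring with $u$, and then converting $u_au_b\mapsto s_+^{-1}(Q_{ab}+\tfrac{s_+}{3}\delta_{ab})$---is a legitimate alternative and makes the geometric content (normal component of the unconstrained Euler--Lagrange residual) more transparent, at the cost of the extra algebraic simplifications in the $\alpha$-group.

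One point to tighten: your final remark that $V_{p^k_{ij}}$ and $V_{Q_{ij}}$ are automatically symmetric in $(i,j)$ is not how the paper treats them (see the explicit symmetrizations $V_{Q_{x_k}}+V_{Q_{x_k}}^T$ in the unconstrained Euler--Lagrange system). In your derivation this is harmless, because only the symmetric part of $A$ survives contraction with the symmetric $\delta Q$, so your vector equation really involves $A^{\mathrm{sym}}$; the lifted tensor equation should then be written with $A^{\mathrm{sym}}$ as well. The resulting expression agrees with the lemma's when tested against symmetric traceless $\eta$, which is all that ``in the weak sense'' requires here.
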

\begin{proof} Let $\phi\in C^\infty_0(\Omega; \R^3)$ be  a test function. For each $u_t = \frac{u+t\phi}{|u+t\phi|}$ with $t\in \R$,
		we define
		\begin{align}\label{eq Q variation}
		Q_t(x):=Q(u_t(x))  = s_+\(u_t(x)\otimes u_t(x)-\frac 1 3 I\)\in S_*.
		\end{align}
 For any $\eta\in C^\infty_0(\Omega; S_0)$, we choose  a test function $\phi$ such that $\phi_{i}:=u_k\eta_{ik} $.
If $Q$ is a minimizer, the    first variation of the energy of $Q$ is zero; that is
	\begin{align*}
	\left.\frac{\d }{\d t}\int_\Omega f_E(Q_t,\nabla Q_t)\d x\right|_{t=0}=\int_\Omega\left. f_{Q_{ij}}\frac{\d Q_{t; ij}}{\d t} +f_{p_{ij}^k}\frac{\d }{\d t}\frac{\p  Q_{ij}}{\p x^k}\d x \right|_{t=0}=0.
	\end{align*}
Note that
\begin{align*}
		   \frac{\d  Q_{t; ij}}{\d t} =
		   &\frac{\Big(
		   	Q_{jl}+\frac {s_+}3 \delta_{jl}
		   	+t(Q_{lm}+\frac {s_+}3 \delta_{lm})\Big)\eta_{il}
		   	+\Big(Q_{il}+\frac {s_+}3 \delta_{il}+t(Q_{lm}+\frac {s_+}3 \delta_{lm})\Big)\eta_{jl}}
	   	{1+2ts_+^{-1}Q_{il}\eta_{il}+t^2s_+^{-1}(Q_{lm}+\frac {s_+}3 \delta_{lm})\eta_{il}\eta_{im}}
		   \\
		   &-\frac{2s_+^{-1}\Big(Q_{ij}+\frac {s_+}3 \delta_{ij}+t(Q_{lm}+\frac {s_+}3 \delta_{lm})\eta_{il}\eta_{im}\Big)
		   	(Q_{lm}+\frac {s_+}3 \delta_{lm})\eta_{lm}}
		   {|1+2ts_+^{-1}Q_{il}\eta_{il}+t^2s_+^{-1}(Q_{lm}+\frac {s_+}3 \delta_{lm})\eta_{il}\eta_{im}|^2},\nonumber
		\end{align*}
where we used  the fact that $|u|=1$ and $\phi_{i}=u_l\eta_{il} $. Then we have
\begin{align}\label{V1}
&\left.\frac{\d  Q_{t; i,j}}{\d t} \right|_{t=0}=s_+\big( u_j\phi_i+u_i\phi_j  -2(u\cdot \phi)(u_i u_j)\big)
\\
=&( Q_{jl}+\frac {s_+}3 \delta_{jl})\eta_{il}+(Q_{il}+\frac {s_+}3 \delta_{il})\eta_{jl}
-2s_+^{-1}(Q_{ij}+\frac {s_+}3 \delta_{ij})(Q_{lm}+\frac {s_+}3 \delta_{lm})\eta_{lm}.\nonumber
\end{align}
Using the fact that $\nabla_k |u+t\phi |^2 =0$ at $t=0$  and  substituting $\phi_{i}:=u_l\eta_{il} $, a simple calculation shows
\begin{align}\label{V2}
		   &  \left.\frac{\d }{\d t}\frac{\p   Q_{t; ij}}{\p x_k}\right|_{t=0}=\left . \( \frac{\p } {\p x_k}\frac{\d }{\d t}    Q_{t; ij}\)\right|_{t=0}
\\
&=\frac{\p Q_{jl}}{\p x_k} \eta_{il}+\frac{\p Q_{il}}{\p x_k}\eta_{jl}
  -2s_+^{-1}\(\frac{\p Q_{ij}}{\p x_k}Q_{lm}+\frac{\p Q_{lm}}{\p x_k}(Q_{ij}+\frac {s_+}3 \delta_{ij})\)\eta_{lm}
           \nonumber \\
            &+(Q_{jl}+\frac  {s_+}3 \delta_{jl})\frac{\p \eta_{il}}{\p x_k}+(Q_{il}+\frac {s_+}3 \delta_{il})\frac{\p \eta_{jl}}{\p x_k}
  -2s_+^{-1}( Q_{ij}+\frac {s_+}3 \delta_{ij})(Q_{lm}+\frac {s_+}3 \delta_{lm})\frac{\p \eta_{lm}}{\p x_k}.\nonumber
		    \end{align}

In the special case of $\frac 12 \int_{\Omega} {|\na Q|^2}\d x$, it follows from using (\ref{V2})  and   $\<Q, \nabla Q\>=0$ that
		\begin{align*}
		& \frac{d}{d t}\int_{\Omega}\left.\frac{|\na Q_t|^2}{2}\d x\right|_{t=0}=\int_\Omega  \nabla_k Q_{t; ij} \left. \frac { d \nabla_k Q_{t; ij}} {dt}  \right|_{t=0}\d x
		\\
		=&\int_{\Omega}2 \na_kQ_{il}\na_kQ_{jl}\eta_{ij}-2 (s_+^{-1}Q_{ij}+\frac 13\delta_{ij})|\na Q|^2 \eta_{ij}\d x
		\\
		&+\frac12s_+ \int_{\Omega}(\nabla_k Q_{il} \nabla_k\eta_{il} + \nabla_k Q_{lj}\nabla_k\eta_{jl})\d x\\
		=&\int_{\Omega}\(-s_+ \Delta Q_{ij} +2 \na_kQ_{il}\na_kQ_{jl} -2 (s_+^{-1}Q_{ij}+\frac 13\delta_{ij})|\na Q|^2\) \eta_{ij}\d x\alabel{EL for one constant on S_*}
		\end{align*}
		for all $\eta\in C^\infty_0(\Omega; S_0)$.

	For the term $V(Q,\na Q)$, using  (\ref{V1})-(\ref{V2})  and integrating by parts, we have
	\begin{align*}
	&\int_\Omega\left.\frac{\d }{\d t}V(Q_t,\nabla Q_t)\right|_{t=0}\d x=\int_\Omega \left. [V_{p^k_{ij}} \frac { d \nabla_k Q_{t; ij}} {dt}+V_{Q_{ij}}\frac {d  Q_{t; ij}}{dt} ]\right|_{t=0}\d x
	\alabel{EL for V on S_*}
	\\
	=&\int_\Omega V_{p^k_{ij}}\((Q_{jl}+\frac {s_+}3 \delta_{jl})\frac{\p \eta_{il}}{\p x_k}+(Q_{il}+\frac {s_+}3 \delta_{il})\frac{\p \eta_{jl}}{\p x_k}+\frac{\p Q_{jl}}{\p x_k} \eta_{il}+\frac{\p Q_{il}}{\p x_k}\eta_{jl}\)\d x
  \\
  		&-2s_+^{-1}\int_\Omega
  	  V_{p^k_{ij}}\(\frac{\p Q_{ij}}{\p x_k} (Q_{lm}+\frac {s_+}3 \delta_{lm})+\frac{\p Q_{lm}}{\p x_k}(Q_{ij}+\frac {s_+}3 \delta_{ij})\)\eta_{lm}\d x
  	  \\
  &+\int_\Omega -2s_+^{-1}V_{p^k_{ij}}(Q_{ij}+\frac {s_+}3 \delta_{ij}) (Q_{lm}+\frac {s_+}3 \delta_{lm})\frac{\p \eta_{lm}}{\p x_k}+V_{Q_{ij}} ( Q_{jl}+\frac {s_+}3 \delta_{jl})\eta_{il}\d x
	\\
	&+\int_\Omega V_{Q_{ij}}\((Q_{il}+\frac {s_+}3 \delta_{il})\eta_{jl}
  -2s_+^{-1}(Q_{ij}+\frac {s_+}3 \delta_{ij}) (Q_{lm}+\frac {s_+}3 \delta_{lm})\eta_{lm} \) \d x
	\\
	=&-\int_\Omega \frac{\p }{\p x_k}\(
	(Q_{jl}+\frac {s_+}3 \delta_{jl})V_{p^k_{il}}
	+(Q_{il}+\frac {s_+}3 \delta_{il})V_{p^k_{jl}}\)\eta_{ij}
  \d x
  \\
  &+\int_\Omega \frac{\p }{\p x_k}\(2s_+^{-1} (Q_{ij}+\frac {s_+}3 \delta_{ij})(Q_{lm}+\frac {s_+}3 \delta_{lm})V_{p^k_{lm}} \)\eta_{ij}+V_{p^k_{il}}\frac{\p Q_{jl}}{\p x_k}\eta_{ij}\d x
	\\
	&+\int_\Omega\(
	  	V_{p^k_{jl}}\frac{\p Q_{il}}{\p x_k}
  -2s_+^{-1}V_{p^k_{lm}}\(\frac{\p Q_{lm}}{\p x_k}(Q_{ij}+\frac {s_+}3 \delta_{ij})+( Q_{lm}+\frac {s_+}3 \delta_{lm})\frac{\p Q_{ij}}{\p x_k}\)\)\eta_{ij}\d x
  \\
	&+\int_\Omega \( V_{Q_{il}}(Q_{jl}+\frac {s_+}3 \delta_{jl})
	+V_{Q_{jl}}(Q_{il}+\frac {s_+}3 \delta_{il})\)\eta_{ij}\d x
  \\
  &-2s_+^{-1}\int_\Omega
    V_{Q_{lm}}(Q_{lm}+\frac {s_+}3 \delta_{lm}) (Q_{ij}+\frac {s_+}3 \delta_{ij})\eta_{ij}\d x.
	\end{align*}
	 Combining  above two identities \eqref{EL for one constant on S_*} and \eqref{EL for V on S_*}, we prove  Lemma  \ref{Lemma 2.2}.
\end{proof}

\begin{cor} Assume that $Q=s_+(u\otimes u-\frac 1 3 I)$. Then $Q=(Q_{ij})$ is a solution of equation
\begin{equation}\label{QH}
\Delta Q_{ij}-2s_+^{-1}\na_kQ_{il}\na_kQ_{jl}+2s_+^{-1}(s_+^{-1}Q_{ij}+\frac 13\delta_{ij})|\na Q|^2=0
\end{equation}
 if and only if $u$ is a harmonic map from $\Omega$ into $S^2$; i.e., $-\Delta u=|\nabla u|^2 u$.
	 \end{cor}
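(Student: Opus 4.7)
The plan is to substitute $Q_{ij}=s_+(u_iu_j-\tfrac13\delta_{ij})$ directly into \eqref{QH} and exploit the constraint $|u|=1$ (equivalently $u_i\nabla_k u_i=0$) to reduce the identity to a symmetric rank-two tensor equation in $u$.

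First I would record the relevant derivatives. From the definition,
\[
\nabla_k Q_{ij}=s_+(\nabla_k u_i\,u_j+u_i\nabla_k u_j),\qquad \Delta Q_{ij}=s_+\bigl(\Delta u_i\,u_j+u_i\Delta u_j+2\nabla_k u_i\nabla_k u_j\bigr).
\]
Using $u_\ell\nabla_k u_\ell=0$, the mixed terms collapse and straightforward computation gives
\[
|\nabla Q|^2=2s_+^2|\nabla u|^2,\qquad \nabla_k Q_{il}\nabla_k Q_{jl}=s_+^2\bigl(\nabla_k u_i\,\nabla_k u_j+u_iu_j|\nabla u|^2\bigr),
\]
together with the algebraic identity $s_+^{-1}Q_{ij}+\tfrac13\delta_{ij}=u_iu_j$.

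Next I would substitute these three identities into the left-hand side of \eqref{QH}. The terms $\pm 2s_+\nabla_k u_i\nabla_k u_j$ cancel, the coefficient of $u_iu_j|\nabla u|^2$ simplifies (a $-2s_+$ combines with $+4s_+$), and what remains is
\begin{align*}
\Delta Q_{ij}-2s_+^{-1}\nabla_k Q_{il}\nabla_k Q_{jl}+2s_+^{-1}\bigl(s_+^{-1}Q_{ij}+\tfrac13\delta_{ij}\bigr)|\nabla Q|^2
= s_+\bigl[v_iu_j+u_iv_j\bigr],
\end{align*}
where $v_i:=\Delta u_i+|\nabla u|^2u_i$. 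This identity makes both directions transparent.

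For the implication $(\Leftarrow)$, if $u$ is a harmonic map into $S^2$ then $v\equiv0$, so \eqref{QH} holds. For $(\Rightarrow)$, assuming $v_iu_j+u_iv_j=0$ I would contract with $u_j$ and use $|u|=1$ to obtain $v_i=-(u\cdot v)u_i$, so $v$ is parallel to $u$. The only real check is that $u\cdot v=0$: differentiating $|u|^2=1$ twice yields $u\cdot\Delta u+|\nabla u|^2=0$, so $u\cdot v=u\cdot\Delta u+|\nabla u|^2=0$. Hence $v\equiv 0$, which is precisely the harmonic map equation $-\Delta u=|\nabla u|^2u$. No step should present a genuine obstacle; the only place that requires mild care is verifying that the two $u_iu_j|\nabla u|^2$ contributions combine with the correct sign, which is what forces the $s_+^{-1}Q_{ij}+\tfrac13\delta_{ij}$ combination in \eqref{QH} to simplify to $u_iu_j$.
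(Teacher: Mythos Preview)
Your proposal is correct. The computations are all accurate: the key identities $|\nabla Q|^2=2s_+^2|\nabla u|^2$, $\nabla_k Q_{il}\nabla_k Q_{jl}=s_+^2(\nabla_k u_i\nabla_k u_j+u_iu_j|\nabla u|^2)$, and $s_+^{-1}Q_{ij}+\tfrac13\delta_{ij}=u_iu_j$ are right, and the substitution indeed collapses the left-hand side of \eqref{QH} to $s_+(v_iu_j+u_iv_j)$ with $v=\Delta u+|\nabla u|^2 u$. The contraction argument for the converse direction, together with $u\cdot\Delta u=-|\nabla u|^2$, is clean and complete.

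The paper does not supply a separate proof of this corollary; it is stated immediately after Lemma~\ref{Lemma 2.2} and is meant to be read off from the identity \eqref{EL for one constant on S_*}, which computes the first variation of $\tfrac12\int|\nabla Q|^2$ along $S_*$ using the test function $\phi_i=u_k\eta_{ik}$. In that variational viewpoint, \eqref{QH} is by construction the Euler--Lagrange equation for the Dirichlet energy restricted to $S_*$, hence automatically equivalent to the harmonic map equation for $u$. Your route is a direct pointwise substitution rather than a variational derivation; it is more elementary and self-contained, and in particular makes the converse implication (from \eqref{QH} back to $-\Delta u=|\nabla u|^2 u$) completely explicit, which the paper's variational argument leaves implicit.
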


Now we give a proof of Theorem \ref{Theorem 2}.
\begin{proof}For each $L>0$, let $Q_L$ be a weak solution to the equation (\ref{MDEL}) with boundary value $Q_0\in W^{1,2}(\Omega, S_*)$
		and assume that $Q_L$ is uniformly bounded in $\Omega$.

For each $\delta>0$, define a set
		\[\Sigma_{\delta}=S_0\backslash S_{\delta}=\{Q\in S_0: \mbox{dist}(Q, S_*)\geq \delta\}.\]
For each $Q\in \Sigma_{\delta}$, we have $\pi (Q)\in S_*$;
i.e., $ \pi (Q) =s_+\(u\otimes u-\frac 1 3 I\)$ with $u\in S^2$.

For a   $ \pi (Q_L) =s_+\(u_L\otimes u_L-\frac 1 3 I\)$ with $u_L\in S^2$ and a  test function $\phi\in C^\infty_0(\Omega; \R^3)$ with a small $t\in\R$, we set  $u_{L, t}:= \frac{u_L+t\phi}{|u_L+t\phi|}$.
  Then we have
	\begin{align}\label{eq projection variation}
	 \big(\pi (Q_L)\big)_t :=  \ s_+\(u_{L, t}\otimes u_{L, t}-\frac 1 3 I\)\in S_*.
		\end{align}
For any $Q\in S_{\delta}$, set
\[  F(Q):=\tilde f_B(Q)+|Q-\pi(Q)|^2.\]
Using the Taylor expansion of $F\((\pi (Q_L))_t\)$ at $Q_L\in S_{\delta}$,  we derive
	\begin{align*}\alabel{eq Taylor}
	&\frac{F\((\pi (Q_L))_t\)}L= \frac{F(Q_L)}{L}
	+\frac{(\nabla F(Q_L) )_{ij}}{L} \((\pi (Q_L))_t -Q_L\)_{ij}
	\\
	&\qquad\qquad+\frac1{2L}\na^2_{Q_{ij}Q_{kl}}F\(Q_{\tau_1}\)\((\pi (Q_L))_t-Q_L\)_{ij}\( (\pi (Q_L))_t-Q_L\)_{kl},
	\end{align*}
where $Q_{\tau_1}:=(1-\tau_1) (\pi (Q_L))_t+\tau_1 Q_L$ for some $\tau_1\in[0,1]$. Note that
\[|Q_{\tau_1}-(\pi (Q_L))_t|\leq |Q_L-\pi (Q_L)|+ |\pi (Q_L)-(\pi (Q_L))_t|.\]
Since  $(\pi (Q_L))_t\in S_*$, it implies that $F((\pi (Q_L))_t)=0$.  Note that the function $F(Q)$ is smooth in $Q$.
For sufficiently small $t$, we have $|Q_{\tau_1}-(\pi (Q_L))_t|\leq 3\delta$ for $Q_L\in S_{2\delta}$.
For each $Q\in S^*$,  it is known that the Hessian of $\tilde f_B(Q)$ is semi-positive definite at $Q=Q^*$. Therefore  each $Q\in S_{\delta}$, the  Hessian of $F_B(Q)$  is positive definite  with sufficiently small $\delta >0$; i.e., for any $Q_L\in S_{2\delta}$,
 we have
\begin{align*}\alabel{qu}
&\na^2_{Q_{ij}Q_{kl}} F\big (Q_\tau\big )( (\pi (Q_L))_t-Q_L)_{ij}( (\pi (Q_L))_t-Q_L)_{kl}\geq \frac 1 2  |(\pi (Q_L))_t-Q_L|^2
\end{align*}
with sufficiently small $t$ and $\delta$.
Then it follows from \eqref{eq Taylor}-\eqref{qu} that
	\begin{align*}\alabel{eq f_B 2 delta}
	&\int_{\Omega_{L,2\delta}} \frac{(\nabla F(Q_L) )_{ij}}{L} ((\pi (Q_L))_t -Q_L)_{ij}\,dx\\
	=&\int_{\Omega_{L,2\delta}} \frac{\na_{Q_{ij}} f_B(Q_L)}L((\pi (Q_L))_t -Q_L)_{ij}\d x
	\\
	&+2\int_{\Omega_{L,2\delta}}\p_{Q_{ij}}(Q_L-\pi(Q_L))_{mn}\frac{(Q_L-\pi(Q_L))_{mn}}{L} ((\pi (Q_L))_t -Q_L)_{ij}\d x
	\\
	\leq&-\frac 12\int_{\Omega_{L,2\delta}}\frac{|(\pi (Q_L))_t-Q_L|^2}{L}\d x
	\end{align*}
provided  $\Omega_{L,2\delta}=\{x\in  \Omega: Q_L(x)\in S_{2\delta}\}$ for $\delta >0$.

By using   Young's inequality, we have
	\begin{align*}\alabel{eq f_B 2 delta p2}
	&\int_{\Omega_{L,2\delta}} \frac1L\na_{Q_{ij}} f_B(Q_L)((\pi (Q_L))_t -Q_L)_{ij} \d x +\frac 14\int_{\Omega_{L,2\delta}}\frac{|(\pi (Q_L))_t-Q_L|^2}{L}\d x
	\\
	\leq& C\int_{\Omega_{L,2\delta}}\frac{|Q_L-\pi(Q_L)|^2}{L} \d x\leq C \int_{\Omega_{L,2\delta}} \frac1L \tilde f_B(Q_L)\,dx,
	\end{align*}
where we used a result in \cite{NZ} or  Corollary 2 in Section 3.

 In order to  extend \eqref{eq f_B 2 delta p2} to $\Omega$, we  define
\begin{align*}\alabel{eq extension}
\hat  Q_{L,t}:=\begin{cases}
\pi(Q_L)_t,&\mbox{ for }Q_L\in S_{\delta}
\\
\frac{|Q_L-\pi(Q_L)|^2}{\delta^2}\pi(Q_L)_t+\frac{\delta^2-|Q_L-\pi(Q_L)|^2}{\delta^2} Q_{*,t},&\mbox{ for }Q_L\in \Sigma_{\delta}\backslash \Sigma_{2\delta}
\\
Q_{*,t},&\mbox{ for } Q_L\in \Sigma_{2\delta}.
\end{cases}
\end{align*}
It can be checked that $\hat  Q_{L,t}\in W^{1,2}_{Q_0}(\Omega ; S_0)$. Then
\begin{align*}\alabel{TB}
\hat  Q_{L,t}-Q_{*,t}=\begin{cases}
\pi(Q_L)_t- Q_{*,t},&\mbox{ for }Q_L\in S_{\delta}
\\
\frac{|Q_L-\pi(Q_L)|^2}{\delta^2} (\pi(Q_L)_t- Q_{*,t}),&\mbox{ for }  Q_L\in \Sigma_{\delta}\backslash \Sigma_{2\delta}
\\
0,&\mbox{ for } Q_L\in \Sigma_{2\delta}.
\end{cases}
\end{align*}

On the other hand, there exists a uniform bound $C(\delta)>0$ such that for all $x\in \Omega \backslash \Omega_{L,\delta}$,  $\tilde f_B(Q_L(x)) \geq C(\delta)$. Using Lemma \ref{lem uniform bound}, we observe that
\begin{align*}\alabel{eq f_B out delta}
&\int_{\Omega \backslash \Omega_{L,\delta} }\frac1L\na_{Q_{ij}} f_B(Q_L)(\hat Q_{L,t}-Q_L)_{ij}\d x
\\
=&\int_{\Omega_{L,2\delta} \backslash \Omega_{L,\delta} }\frac1L\na_{Q_{ij}} f_B(Q_L)\left [\frac{|Q_L-\pi(Q_L)|^2}{\delta^2}(\pi(Q_L)_t-Q_{*,t})+(Q_{*,t}-Q_L)\right]_{ij}\d x
\\
&+\int_{\Omega \backslash \Omega_{L,2\delta} }\frac1L\na_{Q_{ij}} f_B(Q_L)(Q_{*,t}-Q_L)_{ij}\d x \\
\leq& C \frac{| \Omega \backslash \Omega_{L,\delta}|}{L}  \leq  \frac C {C(\delta)}\int_{\Omega \backslash \Omega_{L,\delta} }\frac{\tilde f_B(Q_L) }L \d x.
\end{align*}
By the assumption \eqref{eq Theorem 3} in Theorem 2, we deduce from \eqref{eq f_B 2 delta p2} and \eqref{eq f_B out delta} that
\begin{align*}\alabel{eq na f_B}
\lim_{L\to 0}\int_{\Omega}\frac1L\na_{Q_{ij}} f_B(Q_L)(\hat Q_{L,t}-Q_L)_{ij}\d x\leq 0.
\end{align*}
Multiplying (\ref{MDEL}) by $(\hat Q_{L,t}-Q_L)$ and using \eqref{eq na f_B} yield
\begin{align*}\alabel{eq sub of na f_B}
&\lim_{L\to0}\int_{\Omega }\(\alpha \na_kQ_{L,ij} +\tilde V_{p^k_{ij}}(Q_L,\na Q_L)-\tilde V_{Q_{ij}}(Q_L,\na Q_L)\)\na_k (\hat Q_{L,t}-Q_L)_{ij}\d x\geq 0.
\end{align*}
Here we used the fact that $\hat Q_{L,t}-Q_L$ is symmetric and traceless.

In order to pass a limit, we claim that $\hat  Q_{L,t}\to Q_{*,t}$ strongly in $W^{1,2}_{Q_0}(\Omega ; S_0)$.

\noindent In fact,  it follows from \eqref{TB} that
\begin{align*}\alabel{Q hat convergence}
&\int_{\Omega} |\na  (\hat  Q_{L,t}-Q_{*,t})|^2 \,dx= \int_{ \Omega_{L,2\delta}}  |\na  (\hat  Q_{L,t}-Q_{*,t})|^2 \,dx
\\
= & \int_{\Omega_{L,\delta}} |\na  ( \hat Q_{L,t}-Q_{*,t})|^2  \,dx
+\int_{ \Omega_{L,2\delta} \backslash  \Omega_{L,\delta} } \left |\na \(  \frac{|Q_L-\pi(Q_L)|^2}{\delta^2} (\pi(Q_L)_t- Q_{*,t})\)\right |^2  \,dx
\\
\leq&  \int_{\Omega_{L,\delta}}  |\na  ( (\pi (Q_L))_t-\pi (Q_{*})_t)|^2  \,dx +
 C \int_{ \Omega_{L,2\delta} \backslash  \Omega_{L,\delta} }|\na  ( (\pi (Q_L))_t-\pi (Q_{*})_t)|^2  \,dx
\\
&+C \int_{ \Omega_{L,2\delta} \backslash  \Omega_{L,\delta} }\frac {|(\pi (Q_L))_t-Q_{*,t}|^2}{\delta^4}\( |\nabla ( Q_L-Q_*)|^2+ |\nabla ( \pi (Q_*)-\pi (Q_L))|^2\)\,dx.
\end{align*}
Note that
\begin{align*}
& \pi (Q_L)-\pi (Q_{*})=\na_{Q}\pi (Q_{\xi}) (Q_L-Q_{*}),\\
&(\pi (Q_L))_t-\pi (Q_{*})_t=\na_{Q}\pi(Q_{\xi})_t (Q_L-Q_{*}).
\end{align*}
When $Q_L$ approaches to $Q_*$, $\na_{Q}\pi (Q_{\xi})$ is close to the identity map $I$ and $\na_{Q}\pi(Q_{\xi})_t$ for small $t$.  Therefore
\begin{align*}
& |\na (\pi (Q_L) -\pi (Q_{*}))  |\leq C|\nabla (Q_L-Q_{*})| +C |\nabla Q_{\xi}| |Q_L-Q_{*}|.
\end{align*}
As $Q_L\to Q_*$, the term $\pi(Q_L)_t$ is close to $\pi(Q_*)_t$ and $\na_Q\pi(Q_\xi)_t$ is close to the identity map for small $t$. Note that $\na^2_{QQ}\pi(Q_\xi)_t$ is bounded. Then
\begin{align*}
|\na  ((\pi (Q_L))_t-\pi (Q_{*})_t)|&\leq |\na_Q\pi(Q_\xi)_t\na  (Q_L -Q_{*})|+|\na^2_{QQ}\pi(Q_\xi)_t||\na Q_\xi||Q_L -Q_{*}|
\\
&  \leq C|\nabla (Q_L-Q_{*})| + C|\nabla Q_{\xi}| |Q_L-Q_{*}|.
\end{align*}
Then the inequality \eqref{Q hat convergence} reads as
\begin{align*}
&\int_{\Omega} |\na  (\hat  Q_{L,t}-Q_{*,t})|^2 \,dx
\\
\leq& C\int_{ \Omega_{L,2\delta} }|\nabla (Q_L-Q_{*})|^2   +(|\nabla Q_L|^2 +|\nabla Q_*|^2)|Q_L-Q_{*}|^2\,dx
\\
\leq& C\int_{ \Omega }|\nabla (Q_L-Q_{*})|^2\d x  +C\(\int_{ \Omega\backslash \Sigma_\varepsilon}+\int_{\Sigma_\varepsilon }\)|\nabla Q_*|^2|Q_L-Q_{*}|^2\,dx.
\end{align*}
Here we employ the Egoroff theorem; i.e., for all $\varepsilon>0$, there exists a measurable subset $\Sigma_{\varepsilon} \subset \Omega$ such that
\begin{equation}\label{Egoroff}
| \Sigma_\varepsilon|\leq \varepsilon \mbox{ and }Q_L\to Q_*  \mbox{ uniformly on }\Omega\backslash\Sigma_{\varepsilon} \mbox{ as $L\to 0$}.
\end{equation}As $\varepsilon\to 0$ and $L\to 0$, we prove the claim that $\hat  Q_{L,t}\to Q_{*,t}$ strongly in $W^{1,2}_{Q_0}(\Omega; S_0)$.

 We observe that
\begin{align*}
& \int_{\Omega }|\tilde V_{p^k_{ij}}(Q_L,\na Q_L)\na_k(\hat Q_{L,t}-Q_L)_{ij}-\tilde V_{p^k_{ij}}(Q_*,\na Q_*)\na_k(Q_{*,t}-Q_*)_{ij}|\d x
\\
&\leq \int_{\Omega}| \tilde V_{p^k_{ij}}(Q_L,\na Q_L)||(\na_kQ_{L,t} -\na_kQ_{*,t})_{ij}+(\na_kQ_*-\na_kQ_L)_{ij}|\d x
\\
&+\(\int_{ \Omega\backslash \Sigma_\varepsilon}+\int_{\Sigma_\varepsilon }\) |\tilde V_{p^k_{ij}}(Q_L,\na Q_L)\na_k(Q_{*,t}-Q_*)_{ij}-\tilde V_{p^k_{ij}}(Q_*,\na Q_*)\na_k(Q_{*,t}-Q_*)_{ij}|\d x
\end{align*}
and
\begin{align*}
& \int_{\Omega }|\tilde V_{Q_{ij}}(Q_L,\na Q_L) (\hat Q_{L,t}-Q_L)_{ij} -\tilde V_{Q_{ij}}(Q_*,\na Q_*) (Q_{*,t}-Q_*)_{ij}|\d x
\\
\leq &\(\int_{ \Omega\backslash \Sigma_\varepsilon}+\int_{\Sigma_\varepsilon }\) |\tilde V_{Q_{ij}}(Q_L,\na Q_L) (\hat Q_{L,t}-Q_L)_{ij} -\tilde V_{Q_{ij}}(Q_*,\na Q_L) (\hat Q_{L,t}-Q_L)_{ij}|\d x
\\
&+\int_{\Omega } |\tilde V_{Q_{ij}}(Q_*,\na Q_L) (\hat Q_{L,t}-Q_L)_{ij}-\tilde V_{Q_{ij}}(Q_*,\na Q_*) (\hat Q_{*,t}-Q_*)_{ij}|\d x.
\end{align*}
Using the uniform convergence of $Q_L$ in $\Omega\backslash\Sigma_\varepsilon$ and strong convergence of $\hat Q_{L,t}, Q_L$ in $W^{1,2}_{Q_0}(\Omega,S_0)$,   we derive
\begin{align*}
&\lim_{L\to0}\int_{\Omega }|\tilde V_{Q_{ij}}(Q_L,\na Q_L) (\hat Q_{L,t}-Q_L)_{ij} -\tilde V_{Q_{ij}}(Q_*,\na Q_*) (Q_{*,t}-Q_*)_{ij}|\d x=0,
\\
 &\lim_{L\to0}\int_{\Omega }| \tilde V_{p^k_{ij}}(Q_L,\na Q_L)\na_k(\hat Q_{L,t}-Q_L)_{ij} -\tilde V_{p^k_{ij}}(Q_*,\na Q_*)\na_k(Q_{*,t}-Q_*)_{ij}|\d x=0.
 \end{align*}
As $L\to 0$, the estimate \eqref{eq sub of na f_B} yields
\begin{align*}\alabel{eq L varepsilon to 0}
&\int_{\Omega }\( \alpha \na_kQ_{*,ij} +\tilde V_{p^k_{ij}}(Q_*,\na Q_*)\)\na_k(Q_{*,t}-Q_*)_{ij}\d x
\\
&+\int_{\Omega }\tilde V_{ij}(Q_*,\na Q_*) (Q_{*,t}-Q_*)_{ij}\d x\geq 0.
\end{align*}
For each $\eta \in C_0^{\infty}(\Omega,S_0)$, we define
\begin{align*}
\varphi_{ij} (Q,\eta):=&
(s_+^{-1}Q_{jl}+\frac 13 \delta_{jl})\eta_{il}+(s_+^{-1}Q_{il}+\frac 13 \delta_{il})\eta_{jl}\alabel{test function 1}
\\
&-2(s_+^{-1}Q_{ij}+\frac 13 \delta_{ij})(s_+^{-1}Q_{lm}+\frac 13 \delta_{lm})\eta_{lm}.
\end{align*}
In view of \eqref{V1} and \eqref{V2},  we have
\begin{align*}
\lim_{t\to0}\frac{(Q_t-Q_*)}{t}=\varphi(Q_*,\eta),\quad \lim_{t\to0}\na \frac{(Q_t-Q_*)}{t}=\na\varphi(Q_*,\eta).
\end{align*}
For the estimate \eqref{eq L varepsilon to 0}, the limit in $t$ exists. Dividing \eqref{eq L varepsilon to 0} by $t$ then as $t\to 0^+$ and $t\to 0^-$, we have
\begin{align*}
	\int_{\Omega  } \(\alpha \na_kQ_{*,ij}+V_{p^k_{ij}}(Q_*,\na Q_*) \)\na_k  \varphi_{ij}(Q_*,\eta)+V_{Q_{ij}} (Q_*,\na Q_*)\varphi_{ij}(Q_*,\eta)\d x= 0.
\end{align*}
Repeating the same steps in \eqref{EL for one constant on S_*} and \eqref{EL for V on S_*}, we prove that $Q_*$ satisfies  \eqref{EL}.
\end{proof}

 \section{Smooth convergence of solutions}

In this section, we will prove Theorem 3. At first, we derive some key lemmas.

For any tensor $Q \in S_0$,  there exists a rotation $R(Q)\in SO(3)$     such that  $\tilde Q:=R^T(Q) Q R(Q)$ is  diagonal.
 Moreover, the space  $S^*$ has only three diagonal tensors so  for each $Q\in S_*$, we assume that
\begin{align*}\alabel{ROT Q}
 R^T(Q)QR(Q)= \begin{pmatrix}
		\frac{-s_+}{3}&0&0\\0&\frac{-s_+}{3}&0\\0&0&\frac{2s_+}{3}
	\end{pmatrix}:=Q^+.
\end{align*}

\begin{lemma}
	\label{lem f_B}
	For any $Q\in S_\delta$ and $\xi \in S_0$ with a sufficiently small $\delta>0$, the Hessian of the bulk density $f_B(Q)$ satisfies the following estimate
	\begin{align*}\alabel{eq Hessian}
		\sum_{i,j=1}^3\p_{\tilde Q_{ii}}\p_{\tilde Q_{jj}} f_B(\tilde Q)\xi_{ii}\xi_{jj}\geq \frac{\lambda}{2} ( \xi_{11}^2 +  \xi_{22}^2+ \xi_{33}^2),
	\end{align*}
	where $\lambda =\min\{3a,s_+ b \}>0$ and  $\tilde Q=R^T(Q) Q R(Q)$ is  diagonal.
\end{lemma}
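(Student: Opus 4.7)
The plan is to exploit the fact that $f_B$ depends on $Q$ only through the invariants $\tr(Q^2)$ and $\tr(Q^3)$, so it is $SO(3)$-invariant and satisfies $f_B(\tilde Q) = f_B(Q)$. I would therefore work directly with the diagonal representative $\tilde Q=\mathrm{diag}(\nu_1,\nu_2,\nu_3)$, for which $f_B = -\frac{a}{2}\sum \nu_i^2 - \frac{b}{3}\sum \nu_i^3 + \frac{c}{4}(\sum \nu_i^2)^2$. The strategy is to establish the required bound at the base point $Q^+$ from \eqref{ROT Q} with a strict margin, and then to extend to all $\tilde Q$ with $\dist(Q,S_*)\le \delta$ by continuity of the Hessian.

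First I would compute
\[
H_{ij}(\tilde Q):=\p_{\nu_i}\p_{\nu_j} f_B(\tilde Q) = \delta_{ij}\bigl(-a - 2b\nu_j + c|\tilde Q|^2\bigr) + 2c\,\nu_i\nu_j,
\]
and then evaluate at $Q^+$, where $|Q^+|^2=2s_+^2/3$. The key algebraic input is the defining identity $2cs_+^2 = bs_+ + 3a$, obtained by squaring $s_+ = (b+\sqrt{b^2+24ac})/(4c)$; after this substitution every entry of $H(Q^+)$ collapses to an explicit linear combination of $a$ and $bs_+$. Since $\xi\in S_0$ enforces $\xi_{33}=-\xi_{11}-\xi_{22}$, eliminating $\xi_{33}$ from $\sum_{i,j} H_{ij}(Q^+)\xi_{ii}\xi_{jj}$ reduces the quadratic form to the binary expression
\[
(3a+bs_+)\bigl(\xi_{11}^2+\xi_{22}^2\bigr) + 6a\,\xi_{11}\xi_{22}.
\]

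The heart of the proof is then to verify that this binary form dominates $\lambda(\xi_{11}^2+\xi_{11}\xi_{22}+\xi_{22}^2) = \frac{\lambda}{2}(\xi_{11}^2+\xi_{22}^2+\xi_{33}^2)$ with $\lambda=\min\{3a,s_+b\}$. I would rotate by $u=(\xi_{11}+\xi_{22})/\sqrt{2}$, $v=(\xi_{11}-\xi_{22})/\sqrt{2}$, which diagonalizes any symmetric form $A(x^2+y^2)+2Bxy$ into $(A+B)u^2+(A-B)v^2$; the difference of the two quadratic forms has $A=3a+bs_+-\lambda$ and $B=3a-\lambda/2$, and a short case analysis on whether $\lambda=3a$ or $\lambda=bs_+$ shows $A\pm B>0$ in each case, in fact with slack large enough that the bound at $Q^+$ holds with constant $\lambda$ (not merely $\lambda/2$). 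Finally, since $H_{ij}(\tilde Q)$ is smooth in $\tilde Q$ and the rotation $R(Q)$ can be chosen so that $\tilde Q$ is within $O(\delta)$ of $Q^+$ for any $Q\in S_\delta$, this slack absorbs the perturbation once $\delta$ is taken sufficiently small, delivering the stated inequality with constant $\lambda/2$.

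The main obstacle is the algebraic collapse at $Q^+$: one has to combine the $s_+$-identity with the traceless constraint in just the right way to reveal the clean form $(3a+bs_+)(\xi_{11}^2+\xi_{22}^2)+6a\,\xi_{11}\xi_{22}$, after which the positivity check and the continuity step are both elementary.
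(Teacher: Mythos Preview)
Your proposal is correct and follows essentially the same route as the paper: compute the diagonal Hessian, evaluate at $Q^+$ using the identity $2cs_+^2=bs_++3a$, verify the quadratic form bound there with constant $\lambda$, and then extend to all of $S_\delta$ by continuity of the Hessian. The one place the paper is slicker is the positivity check at $Q^+$: rather than fully expanding in $\xi_{11},\xi_{22}$ and diagonalizing via the $u,v$ rotation, it stops one step earlier at the form $bs_+(\xi_{11}^2+\xi_{22}^2)+3a\,\xi_{33}^2$, from which the bound $\ge \lambda(\xi_{11}^2+\xi_{22}^2+\xi_{33}^2)$ with $\lambda=\min\{3a,bs_+\}$ is immediate without any case analysis.
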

\begin{proof} For a fixed $\pi(Q_0)\in S_*$, there exists a rotation $R(\pi(Q_0))\in   SO(3)$ in \eqref{ROT Q} such that $R^T(\pi(Q_0))\pi(Q_0) R(\pi(Q_0))=Q^+$. For $i=1, 2, 3$, we calculate  the first derivative of $f_B( \tilde  Q)$ by
	\begin{align*}
		\p_{ \tilde  Q_{ii}}f_B(\tilde  Q) =\(-a  \tilde  Q_{ii}-b \tilde  Q_{ik}\tilde  Q_{ki}+c \tilde  Q_{ii}| \tilde  Q|^2\).
	\end{align*}
	Then the second derivative of $f_B(\tilde  Q)$ with $i,j=1, 2, 3$ is
	\begin{align*}
		& \p_{\tilde  Q_{ii}}\p_{\tilde  Q_{jj}}f_B(\tilde  Q)= -a\delta_{ij}
		-2b\delta_{ij}\tilde  Q_{ii}+c(\delta_{ij}|\tilde  Q|^2+2\tilde  Q_{ii}\tilde  Q_{jj})
		.\alabel{2nd D}
	\end{align*}
	For the case of $i=j$ at $Q=Q_0$, $\tilde  Q=Q^+$, From the equality $\frac 23 cs^2_+=\frac 13bs_++a $ (c.f. \cite{MZ}), we find
	\begin{align*}
		\p_{\tilde  Q_{ii}}\p_{\tilde  Q_{ii}}f_B(\tilde  Q) =&-a -2\tilde  Q_{ii}b+( |\tilde  Q|^2+2\tilde  Q_{ii}^2)c=-(2\tilde  Q_{ii}-\frac{s_+}{3})b+2\tilde  Q_{ii}^2c.
	\end{align*}
	Then, at $\tilde  Q=Q^+$,  we have
	\begin{align}
		\p_{\tilde  Q_{11}}\p_{\tilde  Q_{11}}f_B(\tilde  Q)  =&\(s_+b+\frac{2s_+^2}{9}c\)=\frac 13a+\frac{10s_+}{9}b,
		\\
		\p_{\tilde  Q_{22}}\p_{\tilde  Q_{22}}f_B(\tilde  Q) =&\frac 13a+\frac{10s_+}{9}b,
		\\
		\p_{\tilde  Q_{33}}\p_{\tilde  Q_{33}}f_B(\tilde  Q)=&-s_+ b+\frac{8s_+}{9}c=\frac43a-\frac{5s_+}{9}b.
	\end{align}
	For the case of $i\neq j$, at $\tilde  Q=Q^+$, we observe that
	\begin{align}
		2\p_{\tilde  Q_{11}}\p_{\tilde  Q_{22}}f_B(\tilde  Q) =&4\tilde  Q_{11}\tilde  Q_{22}c=\frac{4s_+^2}{9}c=\frac23a+\frac{2s_+}{9}b,
		\\
		2\p_{\tilde  Q_{11}}\p_{\tilde  Q_{33}}f_B(\tilde  Q) =&4\tilde  Q_{11}\tilde  Q_{33}c=-\frac{8s_+^2}{9}c=-\(\frac43a+\frac{4s_+}{9}b\),
		\\
		2\p_{\tilde  Q_{22}}\p_{\tilde  Q_{33}}f_B(\tilde  Q) =&4\tilde  Q_{22}\tilde  Q_{33}c=-\frac{8s_+^2}{9}c=-\(\frac43a+\frac{4s_+}{9}b\).
	\end{align}
	In conclusion, using the fact that $\xi_{33}=-(\xi_{11}+\xi_{22})$, we have at $\tilde  Q=Q^+$
	\begin{align*}
		\p_{\tilde Q_{ii}}\p_{\tilde Q_{jj}} f_B(\tilde Q)\xi_{ii}\xi_{jj}
		=&\(\frac 13a+\frac{10s_+}{9}b\)(\xi_{11}^2+\xi_{22}^2)+\(\frac23a+\frac{2s_+}{9}b\)\xi_{11}\xi_{22}
		\\
		&+\(\frac43a-\frac{5s_+}{9}b \)\xi_{33}^2-\(\frac43a+\frac{4s_+}{9}b\)\xi_{33}(\xi_{11}+\xi_{22})
		\\
		=&  bs_+(\xi_{11}^2+\xi_{22}^2) + 3a\xi_{33}^2\geq \lambda(\xi_{11}^2 + \xi_{22}^2+\xi_{33}^2)
	\end{align*}
	with $\lambda =\min\{3a,s_+b\}>0$. Then
	\begin{align*}
		&\p_{\tilde Q_{ii}}\p_{\tilde Q_{jj}} f_B(\tilde Q)\xi_{ii}\xi_{jj}
		\geq\p_{\tilde Q_{ii}}\p_{\tilde Q_{jj}} f_B( Q^+)\xi_{ii}\xi_{jj}-C|\tilde Q-Q^+|\left|\sum_{i=1}^3\xi_{ii}\right|^2.
	\end{align*}
Due to the fact that $|\tilde Q-Q^+| = |Q-\pi(Q)|$, we prove \eqref{eq Hessian} for a sufficiently small $\delta>0$. 
\end{proof}

\begin{cor}For any $Q\in S_{\delta}$ with a sufficiently small $\delta>0$, there exists constants $C_1,C_2,C_3>0$ such that
\begin{align*}\alabel{f_B Distance}
C_1 \tilde f_B(Q)\leq|Q-&\pi(Q)|^2\leq C_2 \tilde f_B(Q),
\\
|Q-\pi(Q)|\leq&C_3 |g_B(Q)|\alabel{gb Qpi}.
\end{align*}
\end{cor}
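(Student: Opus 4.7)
The plan is to use the rotational invariance of $f_B$ to reduce both inequalities to a quantitative Morse-type statement at the minimum $Q^+$, which is exactly the content of Lemma \ref{lem f_B}. The key geometric observation is that when one uses the rotation $R(Q)$ that diagonalises $Q$, it simultaneously carries $\pi(Q)$ to $Q^+$, so that the difference $\tilde Q-Q^+$ lies in the diagonal subspace of $S_0$ where the Hessian estimate applies.

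Fix $Q\in S_\delta$ with $\delta$ small enough that the top eigenvalue of $Q$ is simple, and choose $R(Q)\in SO(3)$ diagonalising $Q$ with the largest eigenvalue in position $(3,3)$; denote by $v$ the corresponding eigenvector. A direct minimisation of $|Q-s_+(u\otimes u-\tfrac 13 I)|^2=|Q|^2-2s_+\,u\cdot Qu+\tfrac{2s_+^2}{3}$ over $u\in S^2$ identifies the maximiser as $v$, hence $\pi(Q)=s_+(v\otimes v-\tfrac 13 I)$ and $R^T(Q)\pi(Q)R(Q)=Q^+$. Therefore $\xi:=\tilde Q-Q^+=R^T(Q)(Q-\pi(Q))R(Q)$ is \emph{diagonal}, $|\xi|^2=|Q-\pi(Q)|^2$, and rotational invariance gives $\tilde f_B(Q)=\tilde f_B(\tilde Q)$ and $|g_B(Q)|=|g_B(\tilde Q)|$, so I may work entirely in the diagonal frame.

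Since $Q^+\in S_*$ is a global minimum of $f_B$ on $S_0$, $\nabla f_B(Q^+)=0$. Taylor's formula with integral remainder applied to $f_B$ between $Q^+$ and $\tilde Q$ then yields
\[
\tilde f_B(\tilde Q)=\int_0^1(1-s)\sum_{i,j=1}^3\p_{\tilde Q_{ii}}\p_{\tilde Q_{jj}}f_B\bigl(Q^++s\xi\bigr)\,\xi_{ii}\,\xi_{jj}\,ds,
\]
where only the diagonal-diagonal block of the Hessian appears because $\xi$ is diagonal. Each intermediate point $Q^++s\xi$ is itself diagonal and satisfies $|Q^++s\xi-Q^+|\leq\delta$, so Lemma \ref{lem f_B} bounds the integrand below by $\tfrac{\lambda}{2}|\xi|^2$, while smoothness of $f_B$ on the bounded set $S_\delta$ provides a matching upper bound $C|\xi|^2$. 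Integration in $s$ produces the two-sided estimate $C_1\tilde f_B(Q)\leq|Q-\pi(Q)|^2\leq C_2\tilde f_B(Q)$.

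For the gradient bound \eqref{gb Qpi}, I apply the same integral Taylor expansion to $\nabla f_B$: since $\nabla f_B(Q^+)=0$,
\[
\p_{\tilde Q_{ii}}f_B(\tilde Q)=\sum_{j=1}^3 A_{ij}\,\xi_{jj},\qquad A_{ij}:=\int_0^1\p_{\tilde Q_{ii}}\p_{\tilde Q_{jj}}f_B(Q^++s\xi)\,ds,
\]
and $g_B(\tilde Q)$ is itself diagonal because $\tilde Q$ is. Lemma \ref{lem f_B} shows the symmetric matrix $A=(A_{ij})$ is positive definite on the trace-zero diagonal subspace with smallest eigenvalue at least $\lambda/2$, so $|\xi|\leq\tfrac{2}{\lambda}|g_B(\tilde Q)|$; rotating back proves \eqref{gb Qpi}. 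The only delicate point is the simultaneous diagonalisation of $Q$ and $\pi(Q)$ by a single rotation, without which the Hessian bound of Lemma \ref{lem f_B} (restricted to diagonal directions) would be insufficient to close the argument.
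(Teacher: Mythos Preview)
Your proof is correct and follows essentially the same route as the paper's. Both arguments rest on the key observation that $Q$ and $\pi(Q)$ are simultaneously diagonalised by $R(Q)$---you obtain this by directly minimising $|Q-s_+(u\otimes u-\tfrac13 I)|^2$ over $u\in S^2$, while the paper simply cites that $Q$ commutes with $\pi(Q)$ from \cite{NZ}---and then both apply Taylor expansion together with the Hessian estimate of Lemma~\ref{lem f_B} in the diagonal frame. One small imprecision: the \emph{unconstrained} gradient $\partial_{\tilde Q_{ii}} f_B(Q^+)$ is not literally zero (it equals the constant $-2bs_+^2/9$ for each $i$), but since this constant vector is orthogonal to the traceless direction $\xi$ and is removed by the traceless projection defining $g_B$, your Taylor expansions and the bound $|\xi|\le\tfrac{2}{\lambda}|g_B(\tilde Q)|$ are unaffected.
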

\begin{proof}
It follows from the Taylor expansion of $\tilde f_B(Q)$ at $\pi(Q)$ that
\begin{align*}\alabel{f_B Taylor p2}
\tilde f_B(Q)=&\tilde f_B(\pi(Q))+\D_{Q_{ij}}f_B(\pi(Q))(Q-\pi(Q))_{ij}
\\
&+\p_{Q_{ij}}\p_{Q_{kl}} f_B(Q_\tau)(Q-\pi(Q))_{ij}(Q-\pi(Q))_{kl},
\end{align*}
where $Q_\tau$ is an intermediate point between $Q_L$ and $\pi(Q)$.

Since   $Q$  commutes with $\pi(Q)$ (c.f. \cite{NZ}), they can be simultaneously diagonalized.  Note that
$\tilde f_B(\pi(Q))=0$, $\D_{Q_{ij}}f_B(\pi(Q))=0$ and $Q_\tau$ is sufficiently close to $\pi(Q)$. Using Lemma \ref{lem f_B} with the fact that
 \begin{align*}
 	&\sum_{i,j=1}^3 \p_{\tilde  Q_{ii}}\p_{\tilde  Q_{jj}}f_B(\tilde Q)(\tilde Q-\tilde \pi(Q))_{ii}(\tilde Q-\tilde \pi(Q))_{jj}
 	\\
 	=&\p_{Q_{ij}}\p_{Q_{kl}} f_B(Q)(Q-\pi(Q))_{ij}(Q-\pi(Q))_{kl},
 \end{align*} we have
\begin{align}\label{eq f_B Q_tau}
\p_{Q_{ij}}\p_{Q_{kl}} f_B(Q_\tau)(Q-\pi(Q))_{ij}(Q-\pi(Q))_{kl}\geq \frac\lambda 2\sum_{i=1}^3(\tilde Q-Q^+)_{ii}^2.
\end{align}
Then we obtain
\[\tilde f_B(Q)\geq  \frac\lambda 2\sum_{i=1}^3(\tilde Q-Q^+)_{ii}^2= \frac\lambda 2 |Q-\pi(Q)|^2.\]
 The left-hand  side of \eqref{f_B Distance} is a direct consequence of \eqref{f_B Taylor p2} by using Young's inequality. Taking Taylor expansion of $ g_B(Q)$ at $\pi(Q)$ yields
 \begin{align*}
 	(g_B(Q))_{ij}=\p_{Q_{ij}}\p_{Q_{kl}} f_B(Q_{\tau_1})(Q-\pi(Q))_{kl}.
 \end{align*}
Multiplying both side by $(Q-\pi(Q))_{ij}$ and using \eqref{eq f_B Q_tau}, we obtain \eqref{gb Qpi}.
\end{proof}

From now on, for each $L>0$, let   $Q_L$ be a solution to the equation \eqref{MDEL}    and  assume that $Q_L$ is smooth and converges to $Q_*$ uniformly inside $\Omega\backslash \Sigma$, where $\Sigma$ is the singular set of $Q_*$.
For a sufficiently small $L$, $\dist(Q_L;S_*)\leq \delta$  inside $\Omega \backslash \Sigma$.

Set
\begin{align}\label{H}
	H(Q, \nabla Q):=&\alpha  \Delta Q +   \frac 12 {\na_k(V_{Q_{x_k}}+  V^T_{Q_{x_ k}})}-\frac 1 3  I \mbox{ tr} (\na_k  V_{Q_{x_k}})\\
	&-\frac 1 2 (  V_{Q}  +  V^T_{Q})+   \frac 1 3 I   \mbox{ tr} (V_{Q})
	\nonumber
\end{align}
and
\begin{align}\label{GB}
	g_B(Q)=  \(-aQ -b  \(Q Q -\frac 13I \tr(Q^2)\) -cQ \tr(Q^2)\).
\end{align}
Due to the fact that $\tilde Q:=R^T(Q) Q R(Q)$ is diagonal,
$g_B(\tilde Q)=R^T(Q)\, g_B(Q)R(Q)$
is also diagonal for a rotation $R(Q)\in SO(3)$.

Let $Q$ be differentiable in $\Omega$. Then
there exists a  set $\Sigma_Q$, which has measure zero, such that  $R(Q)$ is differentiable in $\Omega\backslash  \Sigma_Q$  (c.f. Corollary 2 \cite{MZ}, \cite{No}). Therefore, we have the following geometric identity of  rotations:

\begin{lemma}\label{Rota}   Assume that for any $x\in \Omega\backslash  \Sigma_Q$,     there exists a  differentiable  rotation $R(Q)$ such that both $R^T(Q)QR(Q)$ and $R^T(Q)h(Q)R(Q)$ are  diagonal.
	Then,   for each $i$, we have
	\begin{align*}\alabel{ROT}
		\nabla  \( R^T(Q)  h(Q) R (Q)\)_{ii} =\(R^T(Q)\nabla     h(Q) R (Q)\)_{ii}.
	\end{align*}
\end{lemma}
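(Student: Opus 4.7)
The plan is a direct computation using the product rule and the antisymmetry that follows from orthogonality of $R(Q)$. I would first set $D(x) := R^T(Q(x))\, h(Q(x))\, R(Q(x))$, so that by hypothesis $D$ is diagonal pointwise on $\Omega\setminus\Sigma_Q$. Applying the product rule along any direction gives
\[
\nabla D = (\nabla R^T)\, h\, R \;+\; R^T (\nabla h)\, R \;+\; R^T h\, (\nabla R).
\]
Thus the whole task reduces to showing that the two outer terms contribute nothing to the diagonal entries $(\nabla D)_{ii}$.

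The second step is to exploit the relation $R^T R = I$. Differentiating yields $(\nabla R^T) R + R^T \nabla R = 0$, so the matrix $A := R^T \nabla R$ satisfies $A^T = -A$. Writing $\nabla R = R A$ and $\nabla R^T = -A R^T$, and inserting these into the formula above, I obtain
\[
\nabla D = -A\, (R^T h R) + R^T(\nabla h) R + (R^T h R)\, A = R^T(\nabla h) R + [D,A],
\]
where $[D,A] = DA - AD$. So the identity \eqref{ROT} is equivalent to the claim that $([D,A])_{ii} = 0$ for each $i$.

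The third step is to verify this claim using the diagonality of $D$: writing $D_{ij} = D_{ii}\delta_{ij}$, one computes
\[
([D,A])_{ii} = \sum_k \bigl(D_{ik}A_{ki} - A_{ik}D_{ki}\bigr) = D_{ii}A_{ii} - A_{ii}D_{ii} = 0,
\]
which is the desired identity. (Note that antisymmetry of $A$ alone already forces $A_{ii}=0$, giving an independent confirmation.) Since the argument is pointwise and $R(Q)$ is differentiable on $\Omega\setminus\Sigma_Q$, this proves \eqref{ROT}. I do not anticipate any real obstacle here; the only subtle point is bookkeeping about where $R(Q)$ is differentiable, but this is already handled by the hypothesis on the measure-zero set $\Sigma_Q$ recalled from \cite{MZ, No}.
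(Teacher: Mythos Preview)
Your proof is correct and rests on the same two ingredients as the paper's---the antisymmetry of $R^T\nabla R$ coming from $R^TR=I$, and the diagonality of $R^ThR$---but the packaging differs. The paper works pointwise: at a fixed $x_0$ it introduces the normalized rotation $\tilde R=R_0^TR$ with $\tilde R(x_0)=I$, observes that $\nabla\tilde R_{ii}(x_0)=0$ (which is precisely antisymmetry at the identity), and then expands $\nabla(\tilde R^T(R_0^ThR_0)\tilde R)_{ii}$ term by term, using that $R_0^Th(Q_0)R_0$ is diagonal to kill the cross terms. Your argument instead keeps everything global and rewrites the extra terms as the commutator $[D,A]$ with $A=R^T\nabla R$ antisymmetric and $D$ diagonal, from which $([D,A])_{ii}=0$ is immediate. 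The commutator route is shorter and more transparent; the paper's frozen-frame device has the mild advantage of avoiding any manipulation of $\nabla R$ beyond its value at a single point, which can be convenient when one is cautious about regularity of $R$ away from $\Sigma_Q$, but mathematically the two arguments are equivalent.
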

\begin{proof} Let $x_0$ be a   fixed point  in $\Omega\backslash \Sigma_Q$ and fix $i=1,2,3$.
	For $Q_0=Q(x_0)\in S_0$, there exists $R_0:=R(Q_0)\in SO(3)$ such that $R_0^TQ_0R_0$ is diagonal.  Denote $\tilde R(Q)=R_0^TR(Q)$ with
	$\tilde R(Q_0)=I$.
	Fix $Q_0\in S_0$, there is $R_0:=R(Q_0)\in SO(3)$ such that $R_0^TQ_0R_0$ and $R_0^Th(Q_0)R_0$ diagonal.  Denote $\tilde R(Q)=R_0^TR(Q)$, so
	$\tilde R(Q_0)=I$.
	Since $R(Q)\in SO(3)$, $R_{ki}(Q)R_{kj}(Q)=\delta_{ij}$.
	Then, for each $i$, we have at $x_0$
	\begin{align*}\alabel{ROT1}
		& \nabla   \( R^T(Q)  h(Q) R (Q)\)_{ii} =\nabla  \( R^T(Q) R_0R^T_0  h(Q)  R_0 R^T_0R (Q)\)_{ii}  \\
		= &  \sum_{k,l=1}^3 \nabla   \tilde R_{ki}(Q)(R^T_0  h(Q)  R_0)_{kl}\tilde R_{li}(Q) + \sum_{k,l=1}^3  \tilde R_{ki}(Q)(R^T_0 h(Q)  R_0)_{kl} \nabla_Q \tilde R_{li}(Q) \\
		&+   \sum_{k,l=1}^3 \tilde R_{ki}(Q) \nabla (R^T_0 h(Q)  R_0)_{kl}\tilde R_{li}(Q).
	\end{align*}

	Note that $R^T_0 h(Q_0) R_0$ is diagonal, $\tilde R_{ik}(Q_0)=\delta_{ik}$ and $\nabla \tilde R_{ii}(Q_0)=0$.
	It can be seen that the term $\nabla   \tilde R_{ki}(Q)(R^T_0Q_0 R_0)_{kl}\tilde R_{li}(Q)$ at $Q=Q_0$ is zero.
	Therefore \[\left .\nabla \( R^T(Q) h(Q)   R (Q)\)_{ii}\right |_{Q=Q_0}=\(R^T(Q_0)\nabla  h(Q)|_{Q=Q_0} R(Q_0)\)_{ii.}\]
	Since $x_0$ is any point,   we prove \eqref{ROT}.
\end{proof}

Denote the inner product by $\<A,B\>=A_{ij}B_{ij}$ for $A,B\in \mathbb M^{3\times 3}$. Using the above geometric identity, we have
\begin{lemma}\label{Var} Let $k$ and $l$ be two integers, for $x\in \Omega\backslash \Sigma_Q$, $Q= Q(x)$ and any smooth scalar function $\phi$, we have
	\begin{align*}\alabel{ROT gb}
		&\< \D^k (g_B(\tilde Q) \phi^2),R^T(Q)\D^{l+1} QR(Q)\>
		\\
		=&\D\< \D^k (g_B(\tilde Q) \phi^2),R^T\D^l QR\>-\< \D^{k+1} (g_B(\tilde Q) \phi^2) ,R^T(Q)\D^l QR(Q)\>.
	\end{align*}
\end{lemma}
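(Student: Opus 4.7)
The identity should be read as a Leibniz-type product rule adapted to the rotated tensor $B:=R^T(Q)\D^l Q\,R(Q)$. My plan is to set $A:=\D^k(g_B(\tilde Q)\phi^2)$ and apply the ordinary product rule for the bilinear pairing $\<\cdot,\cdot\>$, namely $\D\<A,B\>=\<\D A,B\>+\<A,\D B\>$. The first term on the right is exactly $\<\D^{k+1}(g_B(\tilde Q)\phi^2),R^T\D^l QR\>$, which (after moving it to the other side with a sign change) accounts precisely for the last term on the right-hand side of the claimed identity. The remaining task is therefore to identify $\<A,\D B\>$ with $\<A,R^T(Q)\D^{l+1}Q\,R(Q)\>$: in words, the spatial derivative must effectively commute with the conjugation $R^T(\cdot)R$ once the result is paired against $A$.

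This commutation rests on the diagonal structure of $A$. Because $\tilde Q$ is diagonal at every point and $g_B$ is a matrix polynomial (so $g_B(\tilde Q)$ is diagonal in the same basis), multiplication by the scalar $\phi^2$ and iterated spatial differentiation act entrywise on the $3\times 3$ matrix, preserving the property $A_{ij}(x)=0$ for $i\neq j$. Consequently the pairing $\<A,\D B\>$ collapses to $\sum_i A_{ii}\bigl[\D(R^T\D^l Q\,R)\bigr]_{ii}$. Invoking Lemma \ref{Rota} with the role of $h(Q)$ played by $\D^l Q$ would yield $\bigl[\D(R^T\D^l Q\,R)\bigr]_{ii}=(R^T\D^{l+1}Q\,R)_{ii}$, and the same diagonal collapse on the right-hand side would give $\<A,\D B\>=\<A,R^T\D^{l+1}Q\,R\>$, closing the loop.

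The main obstacle I anticipate is precisely this last invocation. Lemma \ref{Rota} as formulated requires that both $R^TQR$ and $R^Th(Q)R$ are diagonal, whereas for $h=\D^l Q$ with $l\geq 1$ the conjugate $R^T\D^l Q\,R$ is not diagonal in general. To secure the step I would rerun the pointwise argument from Lemma \ref{Rota} directly in the present setting: fix $x_0$, decompose $R(Q)=R_0\tilde R(Q)$ with $\tilde R(Q(x_0))=I$ so that differentiating $\tilde R^T\tilde R=I$ yields $\D\tilde R$ antisymmetric at $x_0$; then expand the two "rotation-derivative" cross terms $(\D R^T\,\D^l Q\,R)_{ii}$ and $(R^T\D^l Q\,\D R)_{ii}$, and verify that after contraction with the diagonal coefficients $A_{ii}$ the symmetry of $R^T\D^l Q\,R$ combined with the antisymmetry of $\D\tilde R$ forces the combined cross-term to cancel (or, failing exact cancellation, to match the diagonal part coming from a reorganization of the product rule). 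Checking this cancellation carefully, either entrywise or by induction on $l$, is the technical crux of the proof.
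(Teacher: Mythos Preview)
Your setup is exactly the one the paper uses: write $A:=\D^k(g_B(\tilde Q)\phi^2)$, apply the Leibniz rule to $\langle A, R^T\D^l Q\,R\rangle$, and reduce the claim to showing that the rotation-derivative cross terms, after contraction with the diagonal matrix $A$, vanish. You also correctly flag that Lemma~\ref{Rota} does not apply directly because $R^T\D^l Q\,R$ is not diagonal for $l\ge 1$.

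The gap is in your proposed resolution. At $x_0$, with $N:=R_0^T\D^l Q\,R_0$ symmetric and $S:=\D\tilde R\big|_{x_0}$ antisymmetric, the two cross terms combine into the commutator $[N,S]=NS-SN$, and you need $\sum_i A_{ii}[N,S]_{ii}=0$. But a direct computation gives
\[
[N,S]_{ii}=-2\sum_{j\neq i}N_{ij}S_{ij},
\]
so that $\langle A,[N,S]\rangle=\tr\big([A,N]S\big)$, the Frobenius pairing of two antisymmetric $3\times3$ matrices. This is generically nonzero whenever $N$ has off-diagonal entries; symmetry of $N$ and antisymmetry of $S$ alone do not force cancellation, and your fallback of an ``induction on $l$'' runs into the same obstruction at each step.

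The paper avoids this altogether by inserting an auxiliary object with a \emph{diagonal} middle: it works with $\tilde R^T(R_0^T\D^l Q\,R_0)_D\,\tilde R$ in place of $R^T\D^l Q\,R$. Because $(R_0^T\D^l Q\,R_0)_D$ is diagonal, the Rota-type argument does apply to it (the cross terms reduce to $(\D\tilde R)_{ii}=0$), so the product rule yields exactly two terms with no residual commutator. The connection back to the original expression then uses only that $A$ and $\D A$ are diagonal, so pairing with $R^T\D^l Q\,R$ and with its diagonal part agree at $x_0$. This diagonal-projection trick is the missing ingredient in your proposal.
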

\begin{proof} For $x_0\in \Omega\backslash \Sigma_Q$, $R(x)$ is differentiable in the neighborhood of $x_0$.
	Fixing $Q_0=Q(x_0)\in S_0$, there exists $R_0=R(Q(x_0))$ such that $R_0^TQ_0R_0$ is diagonal. Recall that $\tilde R(Q)=R_0^TR(Q)$, $\tilde R_{ik}(Q_0)=\delta_{ik}$ and $\nabla \tilde R_{ii}(Q_0)=0$.
	For any matrix $A$, let $A_D$ be the diagonal part of $A$ and $A_N$  the non-diagonal  part of $A$ such that
	$A=A_D+A_N$.
	Recall that $\D^k g_B(\tilde Q)$ and $\D^{k+1} g_B(\tilde Q)$ are diagonal. By employing an analogous argument in the proof of Lemma \ref{Rota}, we obtain
	\begin{align*}\alabel{Rot gb1}
		&\D\< \D^k (g_B(\tilde Q) \phi^2),\tilde R^T(Q)(R_0^T\D^{l} QR_0)_D\tilde R(Q)\>
		\\
		=&\< \D^{k+1} (g_B(\tilde Q) \phi^2) ,\tilde R^T(Q)(R_0^T\D^{l} QR_0)_D\tilde R(Q)\>
		\\
		&+\< \D^{k} (g_B(\tilde Q) \phi^2) ,\tilde R^T(Q)(R_0^T\D^{l+1} QR_0)_D\tilde R(Q)\>
		\\
		=&\< \D^{k+1} (g_B(\tilde Q) \phi^2),R^T(Q)\D^{l} QR(Q)\>
		+\< \D^{k} (g_B(\tilde Q) \phi^2) ,R^T(Q)\D^{l+1} QR(Q)\>
		\\
		=&\D\< \D^k (g_B(\tilde Q) \phi^2),R^T(Q)\D^l QR(Q)\>.
	\end{align*}Here we used that
\[\< \D^{k+1} (g_B(\tilde Q) \phi^2) ,\tilde R^T(Q)(R_0^T\D^{l} QR_0)_N\tilde R(Q)\>=0.\]
	Similarly, using \eqref{Rot gb1}, at $Q=Q_0$, we find
	\begin{align*}\alabel{ROT gb2}
		&\< \D^k (g_B(\tilde Q) \phi^2),R^T(Q)\D^{l+1} QR(Q)\>
		\\
		=&\left\< \D^k (g_B(\tilde Q) \phi^2) ,\(\tilde R^T(Q)\D(R_0^T\D^{l} QR_0)_D\tilde R(Q)\)_D\right\>
		\\
		=&\left\< \D^k (g_B(\tilde Q) \phi^2) ,\D\(\tilde R^T(Q)(R_0^T\D^{l} QR_0)_D\tilde R(Q)\)_D\right\>
		\\
		=&\D\< \D^k (g_B(\tilde Q) \phi^2) ,\tilde R^T(Q)(R_0^T\D^{l} QR_0)_D\tilde R(Q)\>
		\\
		&-\< \D^{k+1} (g_B(\tilde Q) \phi^2) ,\tilde R^T(Q)(R_0^T\D^{l} QR_0)_D\tilde R(Q)\>
		\\
		=&\D\< \D^k (g_B(\tilde Q) \phi^2) ,R^T(Q)\D^l QR(Q)\>-\< \D^{k+1} (g_B(\tilde Q) \phi^2) ,R^T(Q)\D^l QR(Q)\>.
	\end{align*}
	Since $x_0\in \Omega\backslash \Sigma_Q$ is arbitrary,  this completes the proof.
\end{proof}
Let $R_L=R(Q_L)$ be a rotation such that $R^T_LQ_LR$ is diagonal. Then  we have
\begin{lemma}\label{lem 2nd est} Let $x_0\in \Omega$ with some  $B_{r_0}(x_0)\subset \Omega\backslash \Sigma$ for a    sufficiently small $r_0$.
	Then,  for any $\phi  \in  C^\infty_0  ( B_{r_0}(x_0))$ and $Q_L\in S_\delta$ with sufficiently small $\delta$,  we have
	\begin{align}\label{eq 2nd}
		\int_{\Omega}\( |\D^2 Q_L|^2+ \frac{|(R^T_L\D Q_LR_L)_D|^2}{L} \)\phi^2\,dx\leq C\int_{\Omega}|\D Q_L|^2  |\D\phi|^2\,dx,
	\end{align}
	where $C$ is a constant independent of $L$. 
\end{lemma}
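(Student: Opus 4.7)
My plan is to test the Euler--Lagrange equation \eqref{MDEL} against $\Delta Q_L\phi^2$ and integrate over $\Omega$, exploiting the diagonal structure of $g_B(\tilde Q_L)$ after the rotation $R_L$ by means of Lemmas~\ref{lem f_B}, \ref{Rota}, and \ref{Var}.

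On the elastic side, the term $\alpha\int|\Delta Q_L|^2\phi^2$ becomes, after a Bochner-type integration by parts, $\alpha\int|\nabla^2 Q_L|^2\phi^2$ modulo cross terms of the shape $\int(\nabla^2 Q_L)(\nabla Q_L)\phi\nabla\phi$ that are absorbed by Young's inequality into $\epsilon\int|\nabla^2 Q_L|^2\phi^2+C\int|\nabla Q_L|^2|\nabla\phi|^2$. The lower-order $V$-contribution $\int H_V:\Delta Q_L\phi^2$ satisfies a similar bound: the growth estimate \eqref{CV} together with the uniform bound on $Q_L$ gives $|H_V|\leq C(|\nabla Q_L|^2+|\nabla^2 Q_L|)$, and further integrations by parts reduce this contribution to the same absorbable form.

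The crucial piece is the bulk term $\frac{1}{L}\int g_B(Q_L):\Delta Q_L\phi^2$. Writing $g_B(Q_L)=R_L g_B(\tilde Q_L)R_L^T$ with $\tilde Q_L=R_L^T Q_LR_L$ diagonal (so $g_B(\tilde Q_L)$ is also diagonal), Lemma~\ref{Var} applied with $k=0,\,l=1$ (the total-divergence term vanishes since $\phi\in C_0^\infty(B_{r_0}(x_0))$ and $Q_L$ is smooth there) yields
\begin{equation*}
\frac{1}{L}\int g_B(Q_L):\Delta Q_L\,\phi^2\,dx=-\frac{1}{L}\int\nabla_k(g_B(\tilde Q_L)\phi^2):(R_L^T\nabla_k Q_LR_L)\,dx.
\end{equation*}
Since $\nabla_k(g_B(\tilde Q_L)\phi^2)$ is diagonal (as $g_B(\tilde Q_L)$ and all its $x$-derivatives are diagonal) while the off-diagonal part $[R_L^T\nabla_k R_L,\tilde Q_L]$ of $R_L^T\nabla_k Q_LR_L$ is purely off-diagonal (antisymmetric commuted with diagonal), only the diagonal part $(R_L^T\nabla_k Q_LR_L)_D=\nabla_k\tilde Q_L$ (by Lemma~\ref{Rota}) contributes. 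The resulting expression splits into $-\frac{1}{L}\int\phi^2\nabla_k g_B(\tilde Q_L):\nabla_k\tilde Q_L-\frac{2}{L}\int\phi\nabla_k\phi\,g_B(\tilde Q_L):\nabla_k\tilde Q_L$, and the chain rule $\nabla_k g_B(\tilde Q_L):\nabla_k\tilde Q_L=\sum_{i,j}\partial_{\tilde Q_{ii}}\partial_{\tilde Q_{jj}}f_B(\tilde Q_L)\nabla_k\tilde Q_{L,ii}\nabla_k\tilde Q_{L,jj}$ together with Lemma~\ref{lem f_B} produces the desired positive $\frac{\lambda}{2L}\int|(R_L^T\nabla Q_LR_L)_D|^2\phi^2$ on the left-hand side.

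The main obstacle is the cross term $\frac{2}{L}\int\phi\nabla_k\phi\,g_B(\tilde Q_L):\nabla_k\tilde Q_L$. Noting $g_B(\tilde Q_L):\nabla_k\tilde Q_L=\nabla_k\tilde f_B(Q_L)$ (chain rule combined with rotation invariance of $f_B$) and integrating by parts once more converts it to $\frac{2}{L}\int\tilde f_B(Q_L)(|\nabla\phi|^2+\phi\Delta\phi)$. To dominate this by $C\int|\nabla Q_L|^2|\nabla\phi|^2$ uniformly in $L$, I would combine \eqref{f_B Distance}--\eqref{gb Qpi} (which, together with Taylor expansion, give $\tilde f_B\leq C|g_B|^2$) with the equation \eqref{MDEL} rearranged as $g_B/L=\alpha\Delta Q_L+H_V$, obtaining $\tilde f_B/L\leq C|g_B|(|\nabla^2 Q_L|+|\nabla Q_L|^2)$. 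Since the hypothesized uniform convergence $Q_L\to Q_*$ on $\overline{B_{r_0}(x_0)}$ and $\mathrm{dist}(Q_L,S_*)\leq\delta$ force $\|g_B(Q_L)\|_{L^\infty(B_{r_0}(x_0))}\to 0$, Young's inequality absorbs the $|\nabla^2 Q_L|$-dependent contribution into the LHS's $\alpha\int|\nabla^2 Q_L|^2\phi^2$ with a sufficiently small coefficient, and the remainder is dominated by $C\int|\nabla Q_L|^2|\nabla\phi|^2$. Collecting all estimates and moving absorbable terms to the left-hand side yields \eqref{eq 2nd}.
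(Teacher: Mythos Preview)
Your overall strategy is close to the paper's, but there are two genuine gaps in the elastic-side argument.

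First, you treat the $V$-part of $H$ by the pointwise bound $|H_V|\leq C(|\nabla Q_L|^2+|\nabla^2 Q_L|)$ and claim this reduces, after integrations by parts, to $\epsilon\int|\nabla^2 Q_L|^2\phi^2+C\int|\nabla Q_L|^2|\nabla\phi|^2$. This cannot be right as stated. The leading piece $\nabla_k V_{p^k}$ contains $V_{pp}\nabla^2 Q_L$, and pairing it with $\Delta Q_L\phi^2$ gives a bilinear form in $\nabla^2 Q_L$ whose coefficient is of order~$1$, not $\epsilon$; it cannot be absorbed into $\alpha\int|\nabla^2 Q_L|^2\phi^2$ without using the full uniform convexity \eqref{sec2 f_E} of $f_{E,1}$ (not just the split $f_{E,1}=\tfrac{\alpha}{2}|\nabla Q|^2+V$). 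The paper makes this work by integrating by parts once to produce $\int\partial^2_{pp}f_{E,1}\,\nabla^2 Q_L\,\nabla^2 Q_L\,\phi^2$ and then invoking \eqref{sec2 f_E}.

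Second, and more seriously, you omit the term $C\int_\Omega|\nabla Q_L|^4\phi^2\,dx$. It is unavoidable: the contributions $V_Q$ and $V_{pQ}\nabla Q$ are quadratic in $\nabla Q_L$, and after pairing with $\nabla^2 Q_L$ and applying Young's inequality they leave exactly this quartic term. It is not dominated by $\epsilon\int|\nabla^2 Q_L|^2\phi^2+C\int|\nabla Q_L|^2|\nabla\phi|^2$. The paper closes the estimate by a separate step: integrating $\int|\nabla Q_L|^4\phi^2$ by parts against $Q_L-Q_{L;x_0,r}$ and using the uniform smallness of $\|Q_L-Q_{L;x_0,r}\|_{L^\infty(B_{r_0})}$ (coming from the uniform convergence $Q_L\to Q_*$ on $B_{r_0}(x_0)\subset\Omega\setminus\Sigma$ and smallness of $r_0$) to absorb it into $\tfrac{\alpha}{4}\int|\nabla^2 Q_L|^2\phi^2$.

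As a side remark, the paper tests \eqref{MDEL} against $\nabla(\phi^2\varphi_\varepsilon^2\nabla Q_L)$ rather than $\Delta Q_L\phi^2$. This places $\phi^2$ inside the derivative in the bulk computation, so that after Lemma~\ref{Rota} one lands directly on $-\tfrac{1}{L}\int\langle\nabla g_B(\tilde Q_L),\nabla\tilde Q_L\rangle\phi^2\varphi_\varepsilon^2$ with no cross term in $\nabla\phi$. Your route produces an extra $\tfrac{2}{L}\int\phi\nabla_k\phi\,g_B(\tilde Q_L)\!:\!\nabla_k\tilde Q_L$, and your proposed treatment of it via $\tilde f_B(Q_L)(|\nabla\phi|^2+\phi\Delta\phi)$ introduces factors of $|\nabla^2 Q_L|$ multiplied by $|\nabla\phi|^2$ (not $\phi^2$), which are awkward to absorb and in any case again generate a $|\nabla Q_L|^4$-type remainder that you have not controlled.
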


\begin{proof} Let $\varphi_\varepsilon$ be a cutoff function such that $\varphi_\varepsilon(x)=0$ for $\dist(x,\Sigma_{Q_L})\leq\varepsilon$ and $\varphi_\varepsilon(x)=1$ for $\dist(x,\Sigma_{Q_L})\geq 2\varepsilon$. Multiplying  \eqref{MDEL} by $\D (\phi^2 \varphi_{\varepsilon}^2 \D Q_L)$  yields
	\begin{align}\label{4.6}
		&\int_{\Omega} \<H(Q_L, \nabla Q_L) ,\,   \D (\phi^2\varphi_{\varepsilon}^2 \D Q_L)\> \,dx
		=\int_{\Omega} \<\frac 1L g_B(Q_L) ,\,   \D (\phi^2\varphi_{\varepsilon}^2 \D Q_L)\> \,dx.
	\end{align}
	Utilizing Lemma \ref{lem f_B} with 
	a sufficiently small  $\delta >0$, we derive
	\begin{align*}\alabel{K1}
		&\quad \<\D  g_B(\tilde Q_L), \nabla \tilde Q_L  \> = \D_{{\alpha}}[\p_{\tilde Q_{ii}}f_B(\tilde Q_L)]\nabla_{\alpha} (\tilde Q_L)_{ii} \\
		&=     \sum_{i,j}\p^2_{\tilde Q_{ii} \tilde Q_{jj}}f_B(\tilde Q_L) \nabla_{\alpha} (\tilde Q_L)_{ii} \nabla_{\alpha}( \tilde Q_L)_{jj}\geq \frac {\lambda} 2\sum_{i=1}^3|\nabla_{\alpha} (\tilde Q_L)_{ii}|^2=\frac {\lambda} 2|\nabla_{\alpha} \tilde Q_L|^2.
	\end{align*}
	Using  Lemma \ref{Rota} and \eqref{K1}, we have
	\begin{align*} \alabel{K}
		&\frac 1L\int_{\Omega} \< g_B(Q_L)\,,   \D (\phi^2\varphi_{\varepsilon}^2  \D Q_L)\>\,dx=\frac 1L\int_{\Omega} \< g_B(\tilde Q_L)\, , R^T_L\D (\phi^2\varphi_{\varepsilon}^2 \D Q_L) R_L\>\,dx
		\\
		=&  \frac 1L\int_{\Omega} \< g_B(\tilde Q_L)\, , \D (\phi^2\varphi_{\varepsilon}^2 R^T_L\D Q_L R_L) \>\,dx
		= -\frac 1L\int_{\Omega} \< \D g_B(\tilde Q_L)\, , \phi^2\varphi_{\varepsilon}^2 R^T_L\D Q_L R_L\>\,dx\\
		=&-\frac 1L\int_{\Omega} \< \D g_B(\tilde Q_L)\, ,  \phi^2 \varphi_{\varepsilon}^2 \D \tilde Q_L) \>\,dx
		\leq-\frac {\lambda} 2\int_{\Omega}\frac{|\nabla \tilde Q_L|^2}L\phi^2 \varphi_{\varepsilon}^2 \,dx
		\\
		\leq&-\frac {\lambda} 2\int_{\Omega}\frac{|(R^T_L\D Q_L R_L)_D|^2}L\phi^2 \varphi_{\varepsilon}^2 \,dx.
	\end{align*}
As $\varepsilon$ tends to zero, we observe that
	\begin{align*}
		&\lim_{\varepsilon\to 0}\int_{\Omega} \<H(Q_L, \nabla Q_L) ,\,   \D (\phi^2\varphi_{\varepsilon}^2 \D Q_L)\> \,dx
		\\
		=& -\lim_{\varepsilon\to 0}\int_{\Omega} \<\D ( H(Q_L, \nabla Q_L) ),\,   \phi^2\varphi_{\varepsilon}^2 \D Q_L \> \,dx=-\int_{\Omega} \<\D_iH(Q_L, \nabla Q_L) ,\,   \D_i Q_L\> \phi^2\,dx.
	\end{align*}
	It follows from using integrating by parts, \eqref{sec2 f_E} and Young's inequality that
	\begin{align*}\alabel{4.7}
		&-\int_{\Omega} \<\D_iH(Q_L, \nabla Q_L) ,\,   \D_i Q_L\> \phi^2\,dx
		\\
		=&-\int_{\Omega}\D_i\( \D_j\frac{\p f_{E,1}(Q_L,\D Q_L)}{\p {(Q_L)_{kl,j}}}-\frac{\p f_{E,1}(Q_L,\D Q_L)}{\p {(Q_L)_{kl}}}\)\D_i (Q_L)_{kl}\phi^2\,dx
		\\
		\geq& \int_{\Omega}\frac{\p^2f_{E,1}(Q_L,\D Q_L)}{\p( Q_L)_{kl,j}\p (Q_L)_{mn,r}}\D^2_{ir}(Q_L)_{mn}\D^2_{ij}(Q_L)_{kl}\phi^2\,dx
		\\
		&-C\int_{\Omega}(|\D^2 Q_L|+|\D Q_L|^2)|\D Q_L|^2\phi^2+(|\D^2 Q_L||\D Q_L|+|\D Q_L|^3)|\D\phi||\phi|\,dx
		\\
		\geq&\frac{\alpha} 4 \int_{\Omega}|\D^2 Q_L|^2\phi^2\,dx-C\int_{\Omega}|\D Q_L|^4 \phi^2+|\D Q_L|^2 |\D\phi|^2\,dx.
	\end{align*}
	Combining \eqref{K} with \eqref{4.7} yields
	\begin{align*}
		&\int_{\Omega}\(\frac{\alpha} 4|\D^2 Q_L|^2+\frac{\lambda}2 \frac{|(R^T_L\D Q_L R_L)_D|^2}L \)\phi^2\,dx\leq C \int_{\Omega}|\D Q_L|^4\phi^2  + |\D Q_L|^2|\D \phi|^2 \,dx.
	\end{align*}
	Integrating by parts and using Young's inequality, we deduce
	\begin{align*}
		&\int_{\Omega}|\D Q_L|^4 \phi^2\,dx= \int_{\Omega}\<\D Q_L, |\D Q_L|^2\D Q_L \> \phi^2\,dx  \\
		=&-\int_{\Omega}\< Q_L-Q_{L; x_0, r}, \D (|\D Q_L|^2\D Q_L) \> \phi^2\,dx
		\\
		&+2\int_{\Omega}\< Q_L-Q_{L; x_0, r}, |\D Q_L|^2\D Q_L\>  \phi \D \phi\,dx\\
		\leq& \frac 1 2 \int_{\Omega}|\D Q_L|^4 \phi^2\,dx + C\int_{\Omega}|Q_L-Q_{L; x_0, r}|^2 |\D^2 Q_L|^2\phi^2+  |\D Q_L|^2  |\D\phi|^2\,dx.
	\end{align*}
	Here $Q_{x_0, r}:=\fint_{B_{r}(x_0)}Q\,dx$. Note that
	\[|Q_L(x)-Q_{L; x_0, r}|\leq |Q_L(x)-Q_*(x)|+|Q_{L; x_0, r}- {Q_*}_{x_0, R}|+|Q_*(x)-{Q_*}_{x_0, R}| \]
	and for $x\in B_r(x_0)\subset \Omega\backslash \Sigma$, $Q_L(x)$ uniformly converges to $Q_*(x)$.
	For   a sufficiently small  $r_0$ and  $L$,  we see that
	\begin{align*}
		&\int_{\Omega}|\D Q_L|^4 \phi^2\,dx\leq
		C\int_{\Omega}|Q_L-Q_{L; x_0, r}|^2 |\D^2 Q_L|^2\phi^2\,dx +C\int_{\Omega} |\D Q_L|^2  |\D\phi|^2\,dx \\
		\leq& \frac\alpha4\int_{\Omega}|\D^2 Q_L|^2\phi^2\,dx +C\int_{\Omega} |\D Q_L|^2  |\D\phi|^2\,dx.
	\end{align*}
	Then we conclude that
	\begin{align*}
		&\int_{\Omega}  \( |\D^2 Q_L|^2+ \frac{|(R^T_L\D Q_LR_L)_D|^2}{L} \)\phi^2\, dx  \leq  C\int_{\Omega}|\D Q_L|^2  |\D\phi|^2\,dx.
	\end{align*}
\end{proof}

As an application of Lemma \ref{lem 2nd est}, we obtain a uniform Caccioppoli inequality for minimizer $Q_L$ as follows.

\begin{lemma}\label{lem 3nd est}
	 Let $x_0\in \Omega$ with  $B_{r_0}(x_0)\subset \Omega\backslash  \Sigma  $ for a sufficiently small $r_0>0$. Then for any $r\leq r_0$, we have
	\begin{align}
		\int_{B_{r/2}(x_0)} |\D Q_L|^2\,dx\leq\frac C{r^2}\int_{B_{r}(x_0)} | Q_L-  Q_{L; x_0, r}|^2\,dx,
	\end{align}
	where  $Q_{L; x_0, r}:=\fint_{B_{r}(x_0)}Q_L\,dx$ and $C$ is a constant independent of $L$.
\end{lemma}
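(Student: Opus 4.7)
The plan is to combine Lemma~\ref{lem 2nd est} with a clean integration-by-parts identity and then to conclude by Widman's filling-the-hole iteration (Lemma~0.V of \cite{Gi}). The key conceptual point, and what makes the argument uniform in $L$, is that one does \emph{not} test the Landau--de Gennes equation directly with $(Q_L - Q_{L;x_0,r})\eta^2$; doing so would force control of the bulk contribution $\frac{1}{L}\langle g_B(Q_L), Q_L - Q_{L;x_0,r}\rangle$, which has no obvious sign and is potentially divergent as $L\to 0$. Instead, all the hard work involving the bulk term has already been absorbed into Lemma~\ref{lem 2nd est} via the diagonalisation identities in Lemmas~\ref{Rota} and~\ref{Var} and the Hessian positivity of Lemma~\ref{lem f_B}, and Lemma~\ref{lem 2nd est} can be used here as a black box.

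Concretely, I would fix $r/2 \leq \sigma < \rho \leq r \leq r_0$ and pick $\eta \in C_0^\infty(B_\rho(x_0))$ with $\eta \equiv 1$ on $B_\sigma(x_0)$ and $|\D\eta| \leq 2/(\rho-\sigma)$. Setting $c := Q_{L;x_0,r}$, a purely algebraic integration by parts gives
\[
\int |\D Q_L|^2 \eta^2\,dx = -\int \langle Q_L - c, \Delta Q_L\rangle \eta^2\,dx - 2\int \langle Q_L - c, \D Q_L\cdot\D\eta\rangle\,\eta\,dx.
\]
Young's inequality bounds the second integral by $\tfrac12\int|\D Q_L|^2\eta^2\,dx + C(\rho-\sigma)^{-2}\int_{B_\rho}|Q_L-c|^2\,dx$. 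For the first, Cauchy--Schwarz together with the pointwise inequality $|\Delta Q_L|^2 \leq 3|\D^2 Q_L|^2$ and Lemma~\ref{lem 2nd est} applied with $\phi = \eta$ yield, for any $\mu>0$,
\[
\left|\int \langle Q_L-c, \Delta Q_L\rangle\eta^2\,dx\right| \leq \mu \int_{B_\rho}|\D Q_L|^2\,dx + \frac{C}{\mu(\rho-\sigma)^2}\int_{B_\rho}|Q_L-c|^2\,dx,
\]
with $C$ independent of $L$.

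Combining the two bounds and absorbing $\tfrac12\int|\D Q_L|^2\eta^2\,dx$ into the left-hand side yields the filling-the-hole inequality
\[
\int_{B_\sigma}|\D Q_L|^2\,dx \leq 2\mu\int_{B_\rho}|\D Q_L|^2\,dx + \frac{C}{(\rho-\sigma)^2}\int_{B_\rho}|Q_L-c|^2\,dx,
\]
valid for every $r/2 \leq \sigma < \rho \leq r \leq r_0$. Choosing $\mu < 1/2$ so that $\theta := 2\mu < 1$ and invoking the standard iteration lemma (Lemma~0.V of \cite{Gi}) produces the required bound $\int_{B_{r/2}(x_0)}|\D Q_L|^2\,dx \leq Cr^{-2}\int_{B_r(x_0)}|Q_L - Q_{L;x_0,r}|^2\,dx$, with $C$ independent of $L$. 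The only real obstacle, the bulk term $\frac{1}{L}g_B(Q_L)$, has already been neutralised at the level of Lemma~\ref{lem 2nd est}; beyond that, the argument is a routine Caccioppoli-type computation whose sole subtlety is to insert the uniform-in-$L$ second-derivative bound at the right place.
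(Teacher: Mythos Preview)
Your proposal is correct and follows essentially the same approach as the paper: integrate by parts to expose the Laplacian, bound $\int\langle Q_L-c,\Delta Q_L\rangle\phi^2$ via the uniform second-order estimate of Lemma~\ref{lem 2nd est} (which has already absorbed the dangerous bulk term), and close with the iteration lemma from \cite{Gi}. The only cosmetic difference is that the paper uses the specific Young weight $(s-t)^2$ so that the right-hand side involves $C_1\int_{B_s\setminus B_t}|\nabla Q_L|^2$ and then performs the literal filling-the-hole step, whereas you carry a free parameter $\mu$ and obtain $\theta=2\mu<1$ directly; these are interchangeable routes to the same iteration inequality.
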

\begin{proof}For two $s, t$ such that $\frac r 2\leq t< s\leq r$, choose a cutoff function $\phi\in C_0^{\infty} (B_s(x_0))$ such that $0\leq \phi \leq 1$, $\phi =1$ on
	$B_t$ and $|\D \phi |\leq C/(s-t)$.

Integrating by parts and 	using  Young's inequality, we have
	\begin{align*}
		\int_{\Omega} |\D Q_L(x)|^2 \phi^2\,dx=&- \int_{\Omega} \<\Delta Q_L, Q_L(x)-Q_{L; x_0, r}\>\phi^2\,dx\\
&- \int_{\Omega} \<\D Q, (Q(x)-Q_{x_0, r})\D \phi^2 \>\,dx\\
\leq
		&\frac 1 2\int_{\Omega} |\D Q_L(x)|^2 \phi^2 \,dx- \int_{\Omega} \<\Delta Q_L, Q_L(x)-Q_{L; x_0, r}\>\phi^2\,dx\\
		&+C  \int_{\Omega}|Q_L(x)-Q_{L; x_0, r}|^2  |\D\phi|^2\,dx.
	\end{align*}
	Then, by Young's inequality and Lemma \ref{lem 2nd est}, we obtain
	\begin{align*}
		\int_{B_t} |\D Q_L|^2 \,dx\leq
		&(s-t)^2\int_{\Omega} |\D^2 Q_L|^2 \,dx +\frac C {(s-t)^2} \int_{B_r(x_0)}|Q_L(x)-Q_{L; x_0, r}|^2 \,dx \\
		\leq& C_1\int_{B_s\backslash B_t}|\D Q_L|^2   \,dx+ \frac C {(s-t)^2} \int_{B_r(x_0)}|Q_L(x)-Q_{L; {x_0, r}}|^2 \,dx.
	\end{align*}
	Through the standard technique of 'filling hole', we have
	\begin{align*}
		\int_{B_t} |\D Q_L|^2 \phi^2\,dx\leq \theta \int_{B_s}|\D Q_L|^2   \,dx+ \frac C {(s-t)^2} \int_{B_r(x_0)}|Q_L(x)-Q_{L; x_0, r}|^2 \,dx
	\end{align*}
	for $\theta= \frac {C_1}{1+C_1} <1$ and two $s, t$ such that $r/2\leq t< s\leq r$. In view of Lemma 3.1 in Chapter V of \cite{Gi}, the relation \eqref{eq 2nd} follows.
\end{proof}

Using Lemma \ref{lem 3nd est}, we have   local uniform estimates on higher derivatives.
\begin{lemma}\label{k-ord}   Let $x_0\in \Omega$ with  $B_{r_0}(x_0)\subset \Omega\backslash \Sigma $.
Assume that there exists a constant $\varepsilon_0>0$  such that
	\begin{align}
		\int_{B_{r_0}(x_0)}|\D Q_L|^3\,dx\leq\varepsilon_0^3.
	\end{align}
	Then,  for any integer $k\geq 1$, there exist a constant $r_k\geq r_0/2$ and  a positive constant $C_k$  independent of $L$ such that
\begin{align*}
&\int_{B_{r_k}(x_0)} |\D^{k+1}Q_L|^2+\frac  1L |(R^T_L\D^k Q_LR_L)_D|^2 \,dx\leq C_k.
\alabel{k-ord estimates}
\end{align*}
\end{lemma}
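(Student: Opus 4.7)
The plan is to prove Lemma \ref{k-ord} by induction on $k$, with the base case $k=1$ already essentially contained in Lemma \ref{lem 2nd est}. For the base case, I would choose a cutoff $\phi \in C_0^\infty(B_{r_0}(x_0))$ with $\phi \equiv 1$ on $B_{r_0/2}(x_0)$ and $|\nabla \phi| \leq C/r_0$, and substitute into \eqref{eq 2nd} to obtain
\begin{align*}
\int_{B_{r_0/2}(x_0)} |\nabla^2 Q_L|^2 + \frac{1}{L} |(R_L^T \nabla Q_L R_L)_D|^2 \d x \leq \frac{C}{r_0^2} \int_{B_{r_0}(x_0)} |\nabla Q_L|^2 \d x.
\end{align*}
The right-hand side is controlled uniformly in $L$ by combining the Caccioppoli inequality of Lemma \ref{lem 3nd est} with the $L^\infty$ bound on $Q_L$ from Lemma \ref{lem uniform bound}, which shrinks $\int_{B_{r_0}}|\nabla Q_L|^2 \d x$ to a constant independent of $L$.

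For the inductive step, assuming the estimate for $1, 2, \ldots, k-1$, I would apply $\nabla^{k-1}$ to both sides of \eqref{MDEL} and then test against $\nabla \cdot(\phi^2 \nabla^{k} Q_L)$, where $\phi \in C_0^\infty(B_{r_{k-1}}(x_0))$ with $\phi \equiv 1$ on $B_{r_k}(x_0)$ and $|\nabla \phi| \leq C/(r_{k-1}-r_k)$. The principal part $\alpha \Delta Q_L$ will yield, after integration by parts and uniform convexity \eqref{sec2 f_E}, the main positive term $\frac{\alpha}{4}\int \phi^2 |\nabla^{k+1}Q_L|^2 \d x$. The cubic bulk contribution $\frac{1}{L}\nabla^{k-1} g_B(Q_L)$ is precisely where the geometric framework developed in Lemmas \ref{Rota}--\ref{Var} pays off: iterating the rotated integration-by-parts identity \eqref{ROT gb} and applying the Hessian coercivity from Lemma \ref{lem f_B} at the diagonal $\tilde Q_L$, one extracts the positive contribution $\frac{\lambda}{2}\int \phi^2 \frac{1}{L}|(R_L^T \nabla^k Q_L R_L)_D|^2 \d x$, which together with the principal term gives exactly the quantity to be bounded.

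All remaining terms are nonlinear errors produced by the Leibniz rule on $f_{E,1}$ and $g_B$, of the schematic form $|\nabla^{j_1}Q_L|\cdots|\nabla^{j_m}Q_L|$ with $j_i \leq k$ and total order at most $k+1$. These can be absorbed by combining the Gagliardo-Nirenberg interpolation, the $L^\infty$ bound on $Q_L$ (Lemma \ref{lem uniform bound}), the inductive $L^2$ bounds on $\nabla^j Q_L$ for $j \leq k$, and crucially the smallness hypothesis $\int_{B_{r_0}}|\nabla Q_L|^3 \d x \leq \varepsilon_0^3$, which controls the cubic couplings through Young's inequality provided $\varepsilon_0$ is chosen sufficiently small. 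The same $\varepsilon$-regularization $\varphi_\varepsilon$ used in the proof of Lemma \ref{lem 2nd est} handles the potential non-differentiability of $R_L$ on the measure-zero set $\Sigma_{Q_L}$.

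The main obstacle is the careful bookkeeping of the off-diagonal components $(R_L^T \nabla^j Q_L R_L)_N$: the coercivity in Lemma \ref{lem f_B} only controls diagonal components, so off-diagonal contributions of intermediate derivatives must be recovered either from the principal elliptic term $\int \phi^2 |\nabla^{k+1}Q_L|^2 \d x$ (using that $\nabla Q = R(R^T \nabla Q R)R^T$ and $R$ is bounded) or from lower-order bounds previously established. Tracking these off-diagonal pieces at each inductive step and verifying their absorption by the main coercive term is delicate, but follows the same pattern set up in Lemma \ref{lem 2nd est}, now iterated at higher order.
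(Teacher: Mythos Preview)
Your proposal is correct and follows essentially the same inductive scheme as the paper: differentiate \eqref{MDEL}, test against $\nabla^k Q_L\,\phi^2\varphi_\varepsilon^2$, extract the elastic coercivity via \eqref{sec2 f_E} and the bulk coercivity via Lemmas \ref{Rota}--\ref{Var} and \ref{lem f_B}, then absorb the Leibniz errors.

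Two refinements from the paper are worth noting. First, the smallness $\int|\nabla Q_L|^3\leq\varepsilon_0^3$ is used only at the step $k=2$ (to control $\int(|\nabla Q|^6+|\nabla Q|^2|\nabla^2 Q|^2)\phi^2$ via H\"older); for $k\geq 3$ the paper instead invokes Sobolev embedding to upgrade the inductive $L^2$ bounds to $\|\nabla^{j}Q_L\|_{L^\infty(B_{r_j})}\leq C_j$ for $j\leq k-2$, which trivializes the lower-order factors in the error terms. Second, your ``main obstacle'' about off-diagonal pieces is not really an obstacle: the paper's identity \eqref{Dk Q}, namely $\int|\nabla^i\tilde Q|^2 Z\varphi_\varepsilon^2\,dx=\int|(R^T\nabla^i QR)_D|^2 Z\varphi_\varepsilon^2\,dx$, shows that all bulk error terms arising from $\nabla^{l+1}g_B(\tilde Q)$ stay purely in the diagonal channel, so they are bounded directly by the inductive hypothesis on $\frac{1}{L}|(R^T\nabla^j Q R)_D|^2$ without ever touching off-diagonal components.
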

\begin{proof} For simplicity of notations, we  denote $Q=Q_L$ and $R=R_L$.
The claim \eqref{k-ord estimates} is true for $k=1$.
At first, we show the case of $k=2$.

Assume that there exists a constant $C_1>0$ such that
\[\int_{B_{r_1}(x_0)}\(|\D Q|^4 +|\D^2 Q|^2+ \frac 1 L  |(R^T\D QR)_D|^2\)\,dx\leq C_1.\]
Let $\phi$ be a cutoff function in $C_0^{\infty}(B_{r_1}(x_0))$, where $r_2$ satisfies $\frac {r_0}2<r_2<r_1<r_0$ and $\phi=1$ in $B_{r_2}(x_0)$.  Let $\varphi_\varepsilon$ be another cutoff function such that $\varphi_\varepsilon(x)=0$ for $\dist(x,\Sigma_Q)\leq\varepsilon$ and $\varphi_\varepsilon(x)=1$ for $\dist(x,\Sigma_Q)\geq 2\varepsilon$.
We differentiate  \eqref{MDEL} twice and multiply by $\D^2  Q\phi^2\varphi_\varepsilon^2$ to get
\begin{align}\label{DH}
		& \int_{B_{r_0}(x_0)}\left \< \D^2_{\beta \gamma}H(Q, \D Q) , \D^2_{\beta \gamma}  Q\phi^2\varphi_\varepsilon^2 \right \> \,dx =\int_{B_{r_0}(x_0)}\left\<\frac 1 L  \D^2_{\beta \gamma} g_B( Q),\D^2_{\beta \gamma}  Q\phi^2\varphi_\varepsilon^2\right\>\,dx.
		\end{align}
	Applying Lemma \ref{Var} and Lemma \ref{lem f_B} to the right-hand side of \eqref{DH}, we find
\begin{align*}\alabel{G3}
	&  \frac 1L\int_{B_{r_0}(x_0)}\<  \D^2_{\beta \gamma} g_B(Q) ,  \D^2_{\beta \gamma} Q \phi^2\varphi_\varepsilon^2 \>\,dx
	=\frac 1L\int_{B_{r_0}(x_0)} \<  g_B(Q) ,  \D^2_{\beta \gamma} (\D^2_{\beta \gamma} Q \phi^2\varphi_\varepsilon^2) \>\,dx
	\\
	=&\frac 1L\int_{B_{r_0}(x_0)}\< g_B(\tilde Q) , R^T\D^2_{\beta \gamma} (\D^2_{\beta \gamma} Q \phi^2\varphi_\varepsilon^2) R\>\,dx
	\\
	=&\frac 1L\int_{B_{r_0}(x_0)}\< \D^2_{\beta \gamma}(g_B(\tilde Q) ), R^T\D^2_{\beta \gamma}  Q R \phi^2\varphi_\varepsilon^2 \>\,dx
	=\frac 1L\int_{B_{r_0}(x_0)}\< \D^2_{\beta \gamma}g_B(\tilde Q)  ,\D^2_{\beta \gamma}\tilde Q\>\phi^2\varphi_\varepsilon^2\,dx
	\\
	=& \frac 1 L  \int_{B_{r_0}(x_0)}\sum_{i,j}  \p^2_{\tilde Q_{ii}\tilde Q_{jj}}f_B(\tilde Q)  \D^2_{\gamma \beta} \tilde Q_{ii}\nabla^2_{\gamma \beta} \tilde Q_{jj} \phi^2\varphi_\varepsilon^2\,dx
	\\
	&+ \frac 1 L \int_{B_{r_0}(x_0)}\sum_{i,j,m}\p^3_{\tilde Q_{ii} \tilde Q_{jj}\tilde Q_{mm}}f_B(\tilde  Q)  \D_{\gamma} \tilde Q_{ii} \D_\beta \tilde Q _{jj} \nabla^2_{\gamma \beta}  \tilde Q_{mm} \phi^2\varphi_\varepsilon^2 \,dx
	\\
	\geq&\frac {\lambda} {4L }  \int_{B_{r_0}(x_0)}    |\D^2 \tilde Q|^2 \phi^2\varphi_\varepsilon^2\,dx-\frac{ C}{L}  \int_{B_{r_0}(x_0)}|\D\tilde  Q|^4 \phi^2 \varphi_\varepsilon^2 \,dx.
	\\
	\geq& \frac {\lambda} {4L }  \int_{B_{r_0}(x_0)}    |(R^T\D^2 QR)_D|^2 \phi^2\varphi_\varepsilon^2\,dx- C  \int_{B_{r_0}(x_0)}|\D Q|^2\frac{|(R^T\D  Q R)_D|^2}{L} \phi^2 \varphi_\varepsilon^2 \,dx,
\end{align*}
where we used that
\begin{align*}\alabel{D2 Q}
	&\int_{B_{r_0}(x_0)}    |\D^2 \tilde Q|^2 \phi^2\varphi_\varepsilon^2\,dx
	=-\int_{B_{r_0}(x_0)} \<\D (R^T QR)_D,\D(\D^2\tilde Q \phi^2\varphi_\varepsilon^2)\>\,dx
	\\
	=&\int_{B_{r_0}(x_0)} \<(R^T \D^2QR)_D \phi^2\varphi_\varepsilon^2,\D^2\tilde Q\>\,dx
	=\int_{B_{r_0}(x_0)}|(R^T\D^2 QR)_D|^2 \phi^2\varphi_\varepsilon^2\,dx.
\end{align*}
Observer that, for any fixed $\varepsilon>0$, $R(x)$ in \eqref{G3} is differentiable. Then it follows from \eqref{ROT Q} that $Q^+=R^T\pi(Q) R$ and
\begin{align*}\alabel{G3.1}
	&C  \int_{B_{r_0}(x_0)}|\D Q|^2\frac{|R^T\D  Q R|^2}{L} \phi^2 \varphi_\varepsilon^2 \,dx
	\\
	=&\frac{C}{L}\int_{B_{r_0}(x_0)}\<(R^T\D  Q R)_D,(R^T\D  Q R)_D\>|\D Q|^2\phi^2 \varphi_\varepsilon^2 \,dx
	\\
	=&\frac{C}{L}\int_{B_{r_0}(x_0)}\<\D( R^T Q R-Q^+)_D,\D (R^T Q R-Q^+)_D\>|\D Q|^2\phi^2 \varphi_\varepsilon^2 \,dx
	\\
	=&C  \int_{B_{r_0}(x_0)}|\D Q|^2\frac{|(R^T\D  (Q-\pi(Q)) R)_D|^2}{L} \phi^2 \varphi_\varepsilon^2 \,dx
	\\
	\leq&C  \int_{B_{r_0}(x_0)}|\D Q|^2\frac{|\D  (Q-\pi(Q))|^2}{L} \phi^2 \varphi_\varepsilon^2 \,dx.
\end{align*}
Letting $\varepsilon \to 0$ in \eqref{G3.1}, using  \eqref{MDEL} and the fact that $|Q-\pi(Q)|\leq C| g_B(Q)|$ in \eqref{gb Qpi}, we find
\begin{align*}\alabel{G3.2}
	&C  \int_{B_{r_0}(x_0)}|\D Q|^2\frac{|\D  (Q-\pi(Q))|^2}{L} \phi^2\,dx
	\\
	=&-\frac{C}{L} \int_{B_{r_0}(x_0)}\<(Q-\pi(Q),\D_\beta\big(\D_\beta  (Q-\pi(Q))|\D Q|^2 \phi^2\big)\>\,dx
	\\
	\leq&C\int_{B_{r_0}(x_0)}\frac{|Q-\pi(Q)|}{L}|\D^2 Q||\D Q|^2 \phi^2\,dx
	+C\int_{B_{r_0}(x_0)}\frac{|Q-\pi(Q)|}{L}|\D Q|^3 |\D\phi||\phi|\,dx
	\\
	\leq&C\int_{B_{r_0}(x_0)}|H(Q,\D Q)||\D^2 Q||\D Q|^2 \phi^2\,dx
	+C\int_{B_{r_0}(x_0)}|H(Q,\D Q)||\D Q|^3 |\D\phi||\phi|\,dx
	\\
	\leq&C\int_{B_{r_0}(x_0)}( |\D Q|^6 +|\D Q|^2|\D^2 Q|^2)\phi^2\,dx+C\int_{B_{r_0}(x_0)}(|\D Q|^4 +|\D^2 Q|^2)|\D\phi|^2\,dx.
\end{align*}
In view of \eqref{G3.1}-\eqref{G3.1}, we deduce \eqref{G3} to
\begin{align*}\alabel{G4}
	&\int_{B_{r_0}(x_0)}\left\<\frac 1 L  \D^2_{\beta \gamma} g_B( Q),\D^2  Q\phi^2\right\>\,dx
	\\
	\geq&  \frac {\lambda} {4L }  \int_{B_{r_0}(x_0)}    |(R^T\D^2 QR)_D|^2 \phi^2\,dx- C\int_{B_{r_0}(x_0)}( |\D Q|^6 +|\D Q|^2|\D^2 Q|^2)\phi^2\,dx
	\\
	&-C\int_{B_{r_0}(x_0)}(|\D Q|^4 +|\D^2 Q|^2)|\D\phi|^2\,dx.
\end{align*}
Applying \eqref{sec2 f_E} and Young's inequality to the left-hand side of \eqref{DH}, we obtain
\begin{align*}\alabel{2ord p4}
	&\int_{B_{r_0}(x_0)}\left \< \D^2_{\beta \gamma}H(Q, \D Q) , \D^2_{\beta \gamma}  Q\phi^2 \right \> \,dx
	\\
=&\int_{B_{r_0}(x_0)}    \D^2_{\beta \gamma}\( \D_k\frac{\p f_{E,1}(Q,\D Q)}{\p {Q_{ij,k}}}-\frac{\p f_{E,1}(Q,\D Q)}{\p {Q_{ij}}}\)  \D^2_{\beta \gamma} Q_{ij}     \phi^2\,dx
\\
\leq&  -\int_{B_{r_0}(x_0)} \frac{\p^2 f_E}{\p {Q_{ij,\gamma}}\p Q_{mn,l}} \D^3_{\alpha \beta l}Q_{mn} \D^3_{\alpha \beta \gamma} Q_{ij}\phi^2 \,dx
		\\
		&+C  \int_{B_{r_0}(x_0)} \left|\D\(\frac{\p^2 f_{E,1}(Q,\D Q)}{\p p\p Q} \D Q\)\right||\D^3 Q|\phi^2\,dx
		\\
		&+C  \int_{B_{r_0}(x_0)}\left|\D\frac{\p^2 f_{E,1}(Q,\D Q)}{\p p\p p} \right|  |\D^2 Q||\D^3Q|\phi^2\,dx
		\\
&+C  \int_{B_{r_0}(x_0)} \left|\D^2\frac{\p f_{E,1}(Q,\D Q)}{\p p}\right||\D^2 Q||\D\phi||\phi|+\left|\D^2\frac{\p f_{E,1}(Q,\D Q)}{\p Q}\right|  |\D^2 Q|\phi^2\,dx
		\\
		\leq & \int_{B_{r_0}(x_0)}\frac{\alpha}{4}|\D^3 Q|^2\phi^2\,dx
		+C\int_{B_{r_0}(x_0)}(|\D Q|^4 +|\D^2 Q|^2)|\D\phi|^2\,dx
		\\
& +C\int_{B_{r_0}(x_0)}( |\D Q|^6 +|\D Q|^2|\D^2 Q|^2)\phi^2\,dx.
\end{align*} 
Using H\"older's inequality, we have 
\begin{align*}\alabel{G5}
\int_{B_{r_0}(x_0)}(|\nabla^2Q|^2|\nabla Q|^2+|\nabla Q|^6)\phi^2\,dx\leq C\varepsilon_0 \int_{B_{r_0}(x_0)}|\nabla^3Q|^2 \phi^2+|\nabla^2 Q|^2|\nabla \phi |^2\,dx.
\end{align*}
Combining  \eqref{G4} with \eqref{G5} and
choosing $\varepsilon_0$ sufficiently small, we obtain
\begin{align*}
\int_{B_{r_2}(x_0)}|\nabla^3Q|^2 +  \frac 1 L |(R^T\D^2  QR)_D|^2\,dx\leq C_2.
\end{align*}
Set $r_k:=(1-\sum_{i=1}^k2^{-(i+1)})r_0>\frac {r_0}2$. For any $k\leq l$, we can assume that there is a constant $C_k$ such that
\begin{align*}\alabel{l-e}
\int_{B_{r_k}(x_0)} (|\D^{k+1}Q|^2+\frac  1L |(R^T\D^k  QR)_D|^2)\,dx\leq C_k.
\end{align*}
As a consequence of the Sobolev inequality, we have
\begin{align*}\alabel{l-s}
 \|\D^{k-1}Q\|_{L^{\infty}(B_{r_k}(x_0))} \leq C_k
\end{align*}
for any $k\leq l-1$.

Next, we prove it for $k=l+1$. Let $r_{l+1}$ be the constant satisfying
\[ \frac {r_0}2<r_{l+1}<r_{l}<\cdots <r_1 <r_0.\]  Let $\phi$ be a cutoff function in $C_0^{\infty}(B_{r_{l}}(x_0))$ with $\phi=1$ in $B_{r_{l+1}}(x_0)$.  We apply $\D^{l+1}$ to \eqref{MDEL} and multiply by $ \D^{l+1} Q\phi^2\varphi_\varepsilon^2 $ to have
\begin{align*}\alabel{Hk}
		& \int_{B_{r_0}(x_0)}\< \D^{l+1}H(Q, \nabla Q),\D^{l+1} Q\phi^2\varphi_\varepsilon^2\> \,dx
	=\int_{B_{r_0}(x_0)} \<\frac 1L \D^{l+1}g_B( Q),\D^{l+1} Q\phi^2\varphi_\varepsilon^2\>\,dx.
		\end{align*}
It follows from Lemma \ref{Var} that
	\begin{align*}\alabel{Gk3}
		&\frac 1L\int_{B_{r_0}(x_0)}\<\D^{l+1}g_B( Q),\D^{l+1} Q\phi^2\varphi_\varepsilon^2\>\,dx 
		\\
		=& (-1)^{l+1}\frac 1L\int_{B_{r_0}(x_0)}\<g_B( \tilde Q),R^T\D^{l+1}(\D^{l+1} Q\phi^2\varphi_\varepsilon^2)R\>\,dx
		\\
		=& \frac 1L\int_{B_{r_0}(x_0)}\<\D^{l+1}g_B( \tilde Q)\phi^2\varphi_\varepsilon^2,R^T\D^{l+1} QR\>\,dx
		\\
		=&(-1)^{l}\frac 1L\int_{B_{r_0}(x_0)}\<\D(\D^{l+1}g_B( \tilde Q)\phi^2\varphi_\varepsilon^2),\D\tilde  Q\>\,dx
		\\
		=&\frac 1L\int_{B_{r_0}(x_0)}\<\D^{l+1}g_B( \tilde Q),\D^{l+1}\tilde  Q\>\phi^2\varphi_\varepsilon^2\,dx
		\\
		=&\frac 1L\int_{B_{r_0}(x_0)}\sum_{i,j}\D^{l}\big(\p^2_{\tilde Q_{ii}\tilde Q_{jj}} f_B(\tilde Q)\D \tilde  Q_{jj}\big)\D^{l+1}\tilde  Q_{ii} \phi^2\varphi_\varepsilon^2\,dx.
	\end{align*} Using a similar argument in \eqref{D2 Q}, for $i\geq 3$, one can check that
\begin{align*}\alabel{Dk Q}
	&\int_{B_{r_0}(x_0)}  |\D^{i} \tilde Q|^2Z(x) \varphi_\varepsilon^2\,dx
	=(-1)^{i-1}\int_{B_{r_0}(x_0)}  \<\D\tilde Q,\D^{i-1}(\D^{i} \tilde QZ(x) \varphi_\varepsilon^2)\>\,dx
	\\
	=&\int_{B_{r_0}(x_0)}  \<\big(R^T\D^i QR)_DZ(x) \varphi_\varepsilon^2\big),\D^{i} \tilde Q\>\,dx
	=\int_{B_{r_0}(x_0)}  |(R^T\D^{i}  QR)_D|^2Z(x) \varphi_\varepsilon^2\,dx
\end{align*}for some scalar function $Z(x)$. 
Observe that $\p^j_{\tilde Q}f_B(\tilde Q)=0$ for $j\geq 5$. Applying Lemma \ref{lem f_B} with a sufficiently small $\delta$ and \eqref{Dk Q} to \eqref{Gk3}, we obtain
		\begin{align*}\alabel{Gk3.1}
		&\frac 1L\int_{B_{r_0}(x_0)}\<\D^{l+1}g_B( Q),\D^{l+1} Q\phi^2\varphi_\varepsilon^2\>\,dx 
		\\
		\geq&\frac 1{2 L}  \int_{B_{r_0}(x_0)}  |\D^{l+1} \tilde Q|^2\phi^2 \varphi_\varepsilon^2\,dx
		\\
		&-\frac C{ L}  \int_{B_{r_0}(x_0)}\sum_{\substack{\mu_1\leq \mu_2\leq l\\\mu_1+\mu_2=l+1}}|\p^3_{\tilde Q} f_B(\tilde Q)|^2|\D^{\mu_1} \tilde Q|^2|\D^{\mu_2} \tilde Q|^2\phi^2 \varphi_\varepsilon^2\,dx
		\\
		&-\frac C{ L}  \int_{B_{r_0}(x_0)}\sum_{\substack{\mu_1\leq \mu_2\leq\mu_3\leq l-1\\\mu_1+\mu_2+\mu_3=l+1}}|\p^4_{\tilde Q} f_B(\tilde Q)|^2|\D^{\mu_1} \tilde Q|^2|\D^{\mu_2} \tilde Q|^2|\D^{\mu_3} \tilde Q|^2\phi^2 \varphi_\varepsilon^2\,dx
		\\
		\geq&\frac 1{2 L}  \int_{B_{r_0}(x_0)}  |(R^T\D^{l+1}  QR)_D|^2\phi^2 \varphi_\varepsilon^2\,dx
		\\
		&-\frac C{ L}  \int_{B_{r_0}(x_0)}\sum_{\substack{\mu_1\leq \mu_2\leq l\\\mu_1+\mu_2=l+1}}|\p^3_{\tilde Q} f_B(\tilde Q)|^2|(R^T\D^{\mu_1}  QR)_D|^2|(R^T\D^{\mu_2}  QR)_D|^2\phi^2 \varphi_\varepsilon^2\,dx
		\\
		&-\frac C{ L}  \int_{B_{r_0}(x_0)}\sum_{\substack{\mu_1\leq \mu_2\leq\mu_3\leq l-1\\\mu_1+\mu_2+\mu_3=l+1}}|\p^4_{\tilde Q} f_B(\tilde Q)|^2|(R^T(\D^{\mu_1}  Q)R)_D|^2
		\\
		&\times|(R^T\D^{\mu_2}  QR)_D|^2|(R^T\D^{\mu_3}  QR)_D|^2\phi^2 \varphi_\varepsilon^2\,dx.
	\end{align*}
		As $\varepsilon$ tends to zero, we obtain from \eqref{l-s} and \eqref{Gk3.1} that
		\begin{align*}\alabel{Gk}
		&\lim_{\varepsilon\to 0}\frac 1L\int_{B_{r_0}(x_0)}\<\D^{l+1}g_B( Q),\D^{l+1} Q\phi^2\varphi_\varepsilon^2\>\,dx 
		\\
		\geq& \frac 1{4 L}  \int_{B_{r_0}(x_0)}  |(R^T\D^{l+1} Q  R)_D|^2\phi^2 \,dx
		-\frac C{L}  \int_{B_{r_0}(x_0)}  |(R^T\D Q  R)_D|^2|(R^T\D^{l} Q  R)_D|^2\phi^2 \,dx
		\\
		&-\frac C{L}  \int_{B_{r_0}(x_0)}  |(R^T\D^2 Q  R)_D|^2|(R^T\D^{l-1} Q  R)_D|^2\phi^2 \,dx-C_k
		\\
		\geq& \frac 1{4 L}  \int_{B_{r_0}(x_0)}  |(R^T\D^{l+1} Q  R)_D|^2\phi^2 \,dx-C_k.
	\end{align*}
Using Young's inequality and integration by parts,  we have
\begin{align*}\alabel{Ek}
	&\int_{B_{r_0}(x_0)}\< \D^{l+1}H(Q, \nabla Q),\D^{l+1} Q\phi^2\> \,dx
	\\
	=&-\int_{\R^3} \frac{\p^2 f_E}{\p {Q_{ij,\gamma}}\p Q_{mn,\beta}} \D^{l+1}\D_{ \beta }Q_{mn} \D^{l+1}\D_{ \gamma} Q_{ij}\phi^2 \,dx
	\\
	&+C  \int_{B_{r_0}(x_0)} \left|\D^{l+1}\frac{\p f_{E,1}(Q,\D Q)}{\p p}\right||\D^{l+1} Q||\D\phi||\phi|\,dx
	\\
	&+C  \int_{B_{r_0}(x_0)} \left|\D^{l+1}\frac{\p f_{E,1}(Q,\D Q)}{\p Q}\right|  |\D^{l+1} Q|\phi^2\,dx\\
	&+C  \int_{B_{r_0}(x_0)} \left|\D^l\(\frac{\p^2 f_{E,1}(Q,\D Q)}{\p p\p Q} \D Q\)\right||\D^{l+2} Q|\phi^2\,dx
	\\
	&+C  \int_{B_{r_0}(x_0)}\left|\D^{l-1}\(\D\frac{\p^2 f_{E,1}(Q,\D Q)}{\p p\p p} \D^2Q\)\right|  |\D^{l+2} Q|\phi^2\,dx	
	\\
 \leq&-\int_{B_{r_0}(x_0)}\frac{\alpha}{4}|\D^{l+2} Q|^2\phi^2\,dx
 \\
 &+ C\int_{B_{r_0}(x_0)} \sum_{\substack{\mu_1+\cdots + \mu_i=l+2\\  \mu_1\leq \cdots \leq \mu_{i-1} <l+2} }|\D^{\mu_1}Q|^2|\D^{\mu_2} Q|^2\cdots |\D^{\mu_{i-1}}Q|^2 |\D^{\mu_i}\phi|^2 \,dx.
\end{align*}
In view of \eqref{l-e}-\eqref{l-s}, we have
\begin{align*}\alabel{Ek1}
	&  C\int_{B_{r_0}(x_0)} \sum_{\substack{\mu_1+\cdots + \mu_i=l+2\\  \mu_1\leq \cdots \leq \mu_{i-1} <l+2} }|\D^{\mu_1}Q|^2|\D^{\mu_2} Q|^2\cdots |\D^{\mu_{i-1}}Q|^2 |\D^{\mu_i}\phi|^2 \leq C_l.
\end{align*}
Combining \eqref{Ek} with \eqref{Ek1}, our claim \eqref{k-ord estimates} follows for $k=l+1$.
\end{proof}
Now we give a proof of Theorem  \ref{Theorem 3}.

\begin{proof}
For any $x_0\in  \Omega\backslash \Sigma$, 	let $B_{2R_0}(x_0)$ be a ball  such that   $B_{2r_0}(x_0)\subset \Omega\backslash \Sigma$. From Lemma \ref{lem 3nd est},  we deduce the following estimate
	\begin{align}
		&\frac 1{ r_0}\int_{B_{\frac 43 r_0}(x_0)} |\D Q_L|^2\,dx\leq C\fint_{B_{2r_0}(x_0)} | Q_L-  {Q_L}_{x_0, 2r_0}|^2\,dx\\
&\leq C\sup_{x\in B_{2r_0}(x_0)}|Q_L(x)-Q_{L; x_0,\rho}|^2.\nonumber
	\end{align}
	It follows from \eqref{eq 2nd} that
\[r_0\int_{B_{r_0}(x_0)}  |\D^2 Q_L|^2\,dx\leq \frac C {r_0}\int_{B_{\frac 43 r_0}(x_0)} |\D Q_L|^2 \,dx. \]
As a consequence of the Gagliardo-Nirenberg interpolation (c.f. \cite{FHM}), we have
	\begin{align*}
		\int_{B_{r_0}(x_0)} |\D Q_L|^3\,dx\leq & C\left( \int_{B_{r_0}(x_0)}r_0|\D^2 Q_L|^2+\frac 1 {r_0}|\D Q_L|^2 \,dx\right)^{3/2}\\
&+C\left(r_0^{-1}\int_{B_{r_0}(x_0)}|\D Q_L|^2\,dx\right)^{3/2}\leq   \varepsilon_0^3.
	\end{align*}
	Using Lemma 4.5 with any $k\geq 1$, we obtain
	\begin{equation}\label{con1.0}
		\|Q_L\|_{W^{k,2}_\loc(\Omega\backslash\Sigma)}<C_k.
	\end{equation}
	Then,   $Q_L$ converges smoothly to  $Q_*$ in $\Omega\backslash\Sigma$.
\end{proof}

\section{The  Landau-de Gennes  density through the Oseen-Frank  density}

In this section,  we will obtain a new form of the Landau-de Gennes  energy density through the Oseen-Frank  density. Under the condition \eqref{L}, it was shown in \cite  {MGKB} that for each $Q =s_+ (u\otimes u-\frac 13 I)\in S_*$, one has
\[W(u,\na u) =f_E(Q,\nabla Q). \]
Assuming   the strong Ericksen condition
\begin{equation}\label{Er}	k_2>|k_4|,\quad k_3>0,\quad 2k_1>k_2+k_4,
\end{equation}
it was pointed out in \cite{HM} (see also \cites{Er2,Ba,FHM}) that there are positive constants $\lambda$ and $C$ such that the density $W(u,\na u)$ is equivalent to a new form that  $\widetilde W(u,p)$  satisfies
\begin{equation*}
	\lambda |p|^2\leq \widetilde W(u,p)\leq C(1+|u|^2) |p|^2
\end{equation*}
for any $u\in\R^3$ and any $p\in\mathbb M^{3\times 3}$. However, there seems   no  reference for an explicit form of $\widetilde W(u,\na u)$, so we give an explicit form $\widetilde W(u,\na u)$ here. For $u\in \R^3$, it can be checked that
	\begin{align*}\alabel{V last}
		&2|\D  u|^2-|\curl u|^2-(\div  u)^2
		\\
		=&(\D_1 u_1+\D_2 u_2)^2+(\D_2 u_2+\D_3 u_3)^2+(\D_1 u_1+\D_3 u_3)^2
		\\
		&+(\D_2u_3+\D_3u_2)^2+(\D_1u_3+\D_3u_1)^2+(\D_1u_2+\D_2u_1)^2.
\end{align*}

\begin{lemma} \label{Lemma 3.1}
	Assume the Frank constants $k_1, \cdots ,k_4$ satisfy
	\begin{align*}\alabel{new k}
	 k_2>|k_4|,\quad \min \{k_1, k_3\}>\frac 12 (k_2+k_4).
	\end{align*}
Then
	 the density  $W(u, \nabla u)$ of  the form (\ref{OF density}) for each $u\in S^2$   is equivalent to the new form
	\begin{align*}\alabel{W}
		\widetilde W(u,\nabla u)=&\frac{ \tilde\alpha}{2} |\nabla u|^2+\frac{2k_1- k_2-k_4- \alpha}{4} (\div u)^2
		\\
		&+\frac{k_2-k_4- \alpha}{4} (u\cdot\curl u)^2
		+\frac{2k_3-k_2-k_4- \alpha}{4}|u\times \curl u|^2
		\\
		& +\frac{k_2+k_4- \alpha}{4}  \sum_{i\neq j}\((\D_i u_i+\D_j u_j)^2+(\D_iu_j+\D_ju_i)^2\)
	\end{align*}
where $ \alpha =\min\{k_2-|k_4|,2k_1- k_2-k_4,2k_3- k_2-k_4\}>0$.
\end{lemma}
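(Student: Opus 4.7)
The plan is to verify two claims: first, that $\widetilde W(u,\nabla u)=W(u,\nabla u)$ pointwise for every $u\in S^2$, so the two functionals coincide on admissible unit-vector fields; and second, that every coefficient appearing in $\widetilde W$ is nonnegative under \eqref{new k}, so that the leading term $\frac{\tilde\alpha}{2}|\nabla u|^2$ supplies the coercive lower bound $\widetilde W(u,p)\ge\frac{\alpha}{2}|p|^2$ for all $u\in\R^3$ and $p\in\mathbb{M}^{3\times 3}$. Together these furnish the equivalence and the coercivity asserted implicitly by the lemma.

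For the equality on $S^2$, I would first rewrite $W$ in \eqref{OF density} using the algebraic identity $\tr(\nabla u)^2=|\nabla u|^2-|\curl u|^2$, valid for any vector field, together with the pointwise splitting $|\curl u|^2=(u\cdot\curl u)^2+|u\times\curl u|^2$ that holds when $|u|=1$. This reduces $W$ to a linear combination of $|\nabla u|^2$, $(\div u)^2$, $(u\cdot\curl u)^2$, and $|u\times\curl u|^2$ with explicit coefficients in $k_1,\ldots,k_4$. Next I would invoke the identity \eqref{V last} to replace the quartic sum $\sum_{i\neq j}[(\partial_i u_i+\partial_j u_j)^2+(\partial_i u_j+\partial_j u_i)^2]$ inside $\widetilde W$ by a combination of $|\nabla u|^2$, $(\div u)^2$, and $|\curl u|^2$, and apply the same $S^2$-splitting of $|\curl u|^2$. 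Collecting the resulting coefficients of the four canonical quantities in $\widetilde W$ and matching them against the corresponding coefficients in $W$ yields the pointwise identity; $\alpha$ should drop out of the final comparison because its only role is to shift weight between the $\frac{\alpha}{2}|\nabla u|^2$ piece and the quartic sum.

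For the coercivity step, I would check each of the five coefficients of $\widetilde W$ in turn. The coefficient $\alpha/2$ is positive by definition, and $(2k_1-k_2-k_4-\alpha)/4$, $(2k_3-k_2-k_4-\alpha)/4$ are nonnegative directly from $\alpha=\min\{k_2-|k_4|,\,2k_1-k_2-k_4,\,2k_3-k_2-k_4\}$. For the remaining coefficients $(k_2-k_4-\alpha)/4$ and $(k_2+k_4-\alpha)/4$, I would split on the sign of $k_4$: if $k_4\ge 0$ then $k_2-|k_4|=k_2-k_4\le k_2+k_4$, while if $k_4<0$ then $k_2-|k_4|=k_2+k_4\le k_2-k_4$, so in either case $\alpha\le\min\{k_2-k_4,\,k_2+k_4\}$ and both coefficients are nonnegative. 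The main technical obstacle will be the careful bookkeeping in the first step: matching coefficients between $W$ and $\widetilde W$ requires applying \eqref{V last} and the $S^2$-splitting of $|\curl u|^2$ consistently, and the appearance of $\alpha$ in every coefficient of $\widetilde W$ creates several opportunities for sign or factor-of-two errors. Once the identity on $S^2$ is verified and the sign checks are in place, the coercivity constant $\alpha/2$ can be read off the leading term directly.
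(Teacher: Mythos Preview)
Your approach is correct but follows a genuinely different route from the paper's. The paper exploits the rotational invariance of $W$: at each point it rotates to a frame with $\tilde u(0)=(0,0,1)^T$, so that $\nabla_j\tilde u_3=0$ and only six gradient components survive; it then expands $W$ explicitly in those six scalars, completes the square to extract the piece $\tfrac{\tilde\alpha}{2}|\nabla u|^2$, and recognizes the remainder $(\tilde\nabla_1\tilde u_1-\tilde\nabla_2\tilde u_2)^2+(\tilde\nabla_1\tilde u_2+\tilde\nabla_2\tilde u_1)^2+|\tilde\nabla_3\tilde u_1|^2+|\tilde\nabla_3\tilde u_2|^2$ as $2|\nabla u|^2-|\curl u|^2-(\div u)^2$ before invoking \eqref{V last}. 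By contrast, you work entirely in invariant language: you reduce both $W$ and $\widetilde W$ to linear combinations of the four canonical quantities $|\nabla u|^2,\ (\div u)^2,\ (u\cdot\curl u)^2,\ |u\times\curl u|^2$ using $\tr(\nabla u)^2=|\nabla u|^2-|\curl u|^2$, the $S^2$-splitting $|\curl u|^2=(u\cdot\curl u)^2+|u\times\curl u|^2$, and \eqref{V last}, then match coefficients. The paper's route is constructive (it shows how the square completion \emph{produces} $\widetilde W$), whereas yours is a clean verification once the target form is handed to you; yours also avoids any frame choice or appeal to rotational invariance. Your coercivity argument via the case split on $\mathrm{sign}(k_4)$ is exactly right and slightly more explicit than what the paper records. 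One small caution: when you invoke \eqref{V last} to handle the $\sum_{i\neq j}$ term of $\widetilde W$, be careful about whether the sum is over ordered or unordered pairs, so that the factor of $2$ lines up with the identity as stated.
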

\begin{proof}
	Note that $W(u, \nabla u)$ is rotational invariant (c.f. \cite{Ho1}); i.e., for each  $R\in SO(3)$, $\tilde x=R(x-x_0)$ and $\tilde u = R  u(x)=R u$. Then we have
	\[
	W(\tilde u, \tilde \na \tilde  u)= W(Ru, R\na  uR^T)=W(u,\na u) .
	\]
	Then for any $u\in S^2$, we can find some $R=R(u(x_0))\in SO(3)$  at each point $x_0\in \Omega$ such that
	\[\tilde u(0):=R u(x_0) = (0,0,1)^T.\]
	Using the relation
	\[\frac{\p \tilde u_3}{\p \tilde x_i} = -(\tilde u_1 \frac{\p \tilde u_1}{\p \tilde x_i}+  \tilde u_2 \frac{\p \tilde u_2}{\p \tilde x_i})=0 \] for all $i=1,2,3$, we evaluate four terms of the Oseen-Frank energy density at $\tilde x_0$
	\begin{align*}\alabel{relation D u}
		(\tilde \na \cdot \tilde u)^2=&(\tilde \na_1 \tilde u_1+\tilde \na_2 \tilde u_2)^2,&(\tilde u\cdot \curl \tilde u)^2=& (\tilde \na_1\tilde u_2-\tilde \na_2\tilde u_1)^2 ,
		\\
		|\tilde u\times  \curl \tilde u|^2
				=&|\tilde \na_3\tilde u_1|^2+|\tilde \na_3\tilde u_2|^2,
		&(\tr(\tilde \na \tilde u)^2-(\tilde \na \cdot \tilde u)^2)=&2\tilde \na_1\tilde u_2\tilde \na_2\tilde u_1-2\tilde \na_1 \tilde u_1\tilde \na_2 \tilde u_2.
	\end{align*}
	Substituting above identities into the density, we have
	\begin{align*}\alabel{eq W}
		W(\tilde u,\tilde \na \tilde u)
		=&\frac {k_1}2(\tilde \na_1 \tilde u_1+\tilde \na_2 \tilde u_2)^2 +\frac{k_2}2 (|\tilde \na_1\tilde u_2|^2+|\tilde \na_2\tilde u_1|^2)
		\\
		& +\frac {k_3}2(|\tilde \na_3\tilde u_1|^2+|\tilde \na_3\tilde u_2|^2)+  {k_4}  \tilde \na_1\tilde u_2\tilde \na_2\tilde u_1-(k_2+k_4)(\tilde \na_1 \tilde u_1\tilde \na_2 \tilde u_2)
		\\
		=&\frac { \tilde\alpha}2 |\tilde \na \tilde u|^2+\frac{2k_1-k_2-k_4- \tilde\alpha}{4}(\tilde \na_1 \tilde u_1+\tilde \na_2 \tilde u_2)^2
		\\
		&+\frac{k_2+k_4- \tilde\alpha}{4}(\tilde \na_1 \tilde u_1-\tilde \na_2 \tilde u_2)^2+\frac {k_2-k_4- \tilde\alpha}2(|\tilde \na_1\tilde u_2|^2+|\tilde \na_2\tilde u_1|^2)
		\\
		&+\frac {(k_3- \tilde\alpha)}2(|\tilde \na_3\tilde u_1|^2+|\tilde \na_3\tilde u_2|^2)+\frac {k_4}2(\tilde \na_1\tilde u_2+\tilde \na_2\tilde u_1)^2
		\\
		=&\frac { \tilde\alpha}2| \na  u|^2+\frac{2k_1- k_2-k_4- \tilde\alpha}{4} (\div u)^2+\frac{k_2-k_4- \tilde\alpha}{4} (u\cdot\curl u)^2
		\\
		& +\frac {k_3- \tilde\alpha}2|u\times \curl u|^2+\frac{k_2+k_4- \tilde\alpha}{4}\(|\tilde \na_1 \tilde u_1-\tilde \na_2 \tilde u_2|^2+|\tilde \na_1\tilde u_2+\tilde \na_2\tilde u_1|^2\),
	\end{align*}
	where $ \tilde\alpha$ is a positive constant due to the strong Ericksen condition (\ref {Er}). Using \eqref{relation D u}, we find
	\begin{align*}
		&|\tilde \na_1 \tilde u_1-\tilde \na_2 \tilde u_2|^2+|\tilde \na_1\tilde u_2+\tilde \na_2\tilde u_1|^2+|\tilde \na_3\tilde u_1|^2+|\tilde \na_3\tilde u_2|^2
		\\
		=&|\tilde\D \tilde u|^2+\tr(\tilde\D \tilde u)^2-(\tilde\D\cdot \tilde u)^2=2|\D  u|^2-|\curl u|^2-(\div  u)^2.
	\end{align*}
	If we further assume that $2k_3>k_2+k_4$, then we can rewrite \eqref{eq W} into
	\begin{align*}\alabel{eq W1}
	\widetilde W( u,  \na   u)=&\frac{\tilde\alpha} 2| \na  u|^2+\frac{2k_1- k_2-k_4- \alpha}{4} (\div u)^2
	\\
	&
	+\frac{k_2-k_4- \alpha}{4} (u\cdot\curl u)^2+\frac{2k_3-k_2-k_4- \alpha}{4}|u\times \curl u|^2
	\\
	& +\frac{k_2+k_4- \alpha}{4}\(2|\D  u|^2-|\curl u|^2-(\div  u)^2\).
	\end{align*}
	From \eqref{V last}, we prove \eqref{W}.
\end{proof}
It is clear that  the new form  $\widetilde W(u,p)$ in \eqref{W} with $p=\nabla u$ satisfies
\begin{equation*}
	 \frac{\tilde\alpha} 2  |p|^2\leq \widetilde W(u,p)\leq C(1+|u|^2)|p|^2
\end{equation*}
for all $u\in\R^3$ and  $p\in\mathbb M^{3\times 3}$.

Through the relation \eqref{W},  we can have  the  new Landau-de Gennes  energy density satisfying the coercivity in the following:

\begin{prop}\label{Theorem 4}  Assume that $\hat L_1$, $\hat L_2$, $\hat L_3$ and $\hat L_4$ satisfy the condition
	\begin{align*}\alabel{Er1}
	\hat L_1:=&s_+^{-2}\frac{2k_1- k_2-k_4}{2},
	&\hat L_2:=&s_+^{-2}\frac{2k_3-k_2-k_4}{2}
	\\
	\hat L_3:=&s_+^{-2}\frac{k_2-k_4}{2},
	&\hat L_4:=&s_+^{-2}\frac{k_2+k_4}{2}.
	\end{align*}
	Then for each $Q\in S_*$, we obtain
	\begin{align*}
	f_{E,2}(Q,\nabla Q):=&\frac{ \alpha}2|\nabla Q|^2 + \frac{\hat L_1-\tilde \alpha}{2}
	\sum_{i=1}^3 \Big((s_+^{-1}Q +\frac13I)_{ij}(\na\cdot Q_j)\Big)^2
	\\
	& +\frac{\hat L_2-\tilde \alpha}{2}
	\left| (s_+^{-1}Q+\frac13I)_i\times\curl Q_{i}\right|^2
	\\
	&+ \frac{\hat L_3-\tilde \alpha}{2}	\sum_{i=1}^3\( (s_+^{-1}Q +\frac13I)_{ij}(\curl Q_j)_i\)^2
	\\
	&+\frac{\hat L_4-\tilde \alpha}{2}\sum_{i\neq j}\sum_{k=1}^3 \Big( (\nabla_iQ_{ik} +\nabla_jQ_{jk})^2+(\nabla_iQ_{jk}+\nabla_jQ_{ik})^2 \Big),
	\end{align*}
	where $Q_i$ is the $i$-th column of the $Q$ matrix and $\tilde \alpha$ is given by
		\begin{align*}\alabel{eq alpha}
		& \alpha=\min\{\hat L_1,\hat L_2,\hat L_3,\hat L_4 \} >0
		.\end{align*}
\end{prop}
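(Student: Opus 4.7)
The plan is to show that for a uniaxial $Q = s_+(u \otimes u - \frac{1}{3}I) \in S_*$ with $|u|=1$, the proposed density $f_{E,2}(Q, \nabla Q)$ reproduces exactly the reformulated Oseen-Frank density $\widetilde W(u, \nabla u)$ of Lemma \ref{Lemma 3.1}, and then to extract the coercivity bound on the full space $S_0$ from the manifest structure of $f_{E,2}$.

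First, starting from the identities
\[
\nabla_k Q_{ij} = s_+\bigl(u_j\nabla_k u_i + u_i\nabla_k u_j\bigr), \qquad s_+^{-1}Q + \tfrac{1}{3}I = u \otimes u,
\]
together with $|u|^2 = 1$ and its differential consequences $u \cdot \nabla u = 0$ and $u_j\nabla_k u_j = 0$, I will reduce each of the four anisotropic pieces of $f_{E,2}$ to its $u$-counterpart. A direct calculation gives: (a) the divergence-type piece collapses to $s_+^2(\div u)^2$; (b) the scalar curl piece reduces to $s_+^2(u\cdot\curl u)^2$; (c) the vector cross-product piece yields $s_+^2|u\times\curl u|^2$; and (d) the symmetric double sum over $i\ne j$ and $k$, using the algebraic identity \eqref{V last}, becomes $s_+^2\bigl(2|\nabla u|^2 - |\curl u|^2 - (\div u)^2\bigr)$. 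Combined with $|\nabla Q|^2 = 2s_+^2|\nabla u|^2$, this accounts for every term.

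Second, I substitute these reductions into $f_{E,2}$ and invoke the explicit values
\[
\hat L_1 = \tfrac{2k_1 - k_2 - k_4}{2s_+^2},\quad \hat L_2 = \tfrac{2k_3 - k_2 - k_4}{2s_+^2},\quad \hat L_3 = \tfrac{k_2 - k_4}{2s_+^2},\quad \hat L_4 = \tfrac{k_2 + k_4}{2s_+^2}.
\]
The $s_+^{\pm 2}$ factors cancel, so the resulting expression matches $\widetilde W(u, \nabla u)$ coefficient-by-coefficient. Since Lemma \ref{Lemma 3.1} gives $\widetilde W = W$, this establishes $f_{E,2}(Q,\nabla Q) = W(u,\nabla u)$ for every $Q \in S_*$. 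For coercivity over the full space $S_0$, I observe that each of the four anisotropic pieces of $f_{E,2}$ is a non-negative quadratic form in $\nabla Q$ (each being a sum of squares of linear combinations of the entries of $\nabla Q$), while each multiplicative constant $(\hat L_i - \tilde\alpha)/2$ is non-negative by the choice $\tilde\alpha = \min\{\hat L_1, \hat L_2, \hat L_3, \hat L_4\}$. Hence $f_{E,2}(Q, \nabla Q) \ge \tfrac{\tilde\alpha}{2}|\nabla Q|^2$ for every $Q \in S_0$.

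The main obstacle is the curl-type reduction in step (b)-(c): the column curl $(\curl Q_j)_i$ expands via the Levi-Civita symbol into $s_+\epsilon_{ikl}(u_j\nabla_k u_l + u_l\nabla_k u_j)$, and one must carefully contract this against the $u_i u_j$ prefactor coming from $(s_+^{-1}Q + \tfrac{1}{3}I)_{ij}$, invoking $|u|^2 = 1$ and $u_j\nabla_k u_j = 0$ at precisely the right moments to see that the spurious piece vanishes and only the intended $s_+(u\cdot\curl u)$ and $s_+(u\times\curl u)$ contributions survive. Once these four algebraic identities are in hand, the proof is a direct substitution and comparison of coefficients.
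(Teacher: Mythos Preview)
Your proposal is correct and takes essentially the same approach as the paper: both arguments rest on the dictionary $s_+^{-1}Q+\tfrac13 I=u\otimes u$ (equivalently $u_l\nabla_k u_i = s_+^{-1}(s_+^{-1}Q+\tfrac13 I)_{lj}\nabla_k Q_{ij}$) to translate each of the four anisotropic pieces of $\widetilde W(u,\nabla u)$ from Lemma~\ref{Lemma 3.1} into $Q$-language, using \eqref{V last} for the symmetric double sum, and then reading off the coefficients. Your explicit coercivity observation for general $Q\in S_0$ is left implicit in the paper's surrounding discussion rather than stated in its proof.
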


\begin{proof}
	Due to the fact that $|u|^2=1$, a direct calculation yields
	\[\nabla_ku_i =u_j\nabla_k (u_i u_j)=s_+^{-1}(u_1\nabla_kQ_{i1}+u_2\nabla_kQ_{i2}+u_3\nabla_kQ_{i3})=s_+^{-1}u_j\nabla_kQ_{ij}.
	\]
	One can verify  that
	\begin{align*}
		u_l\nabla_ku_i =s_+^{-1}(s_+^{-1}Q +\frac13I)_{lj}\nabla_kQ_{ij}.\alabel{u and Q}
	\end{align*}
	Here we used the fact that $|Q|=\sqrt{\frac 2 3}s_+$.
	Then we can derive $f_E(Q,\D Q)$ from \eqref{W} that
	\begin{align}\label{u into Q I2}
		&s_+^2(\div u)^2=s_+^2\sum_i (u_i\div u)^2=\sum_i \Big((s_+^{-1}Q +\frac13I)_{ij}(\na\cdot Q_j)\Big)^2,
		\\
		&s_+^2|u\times \curl u|^2=s_+^2|u\times (s_+^{-1}u_j(\curl Q_{j}))|^2=\left| (s_+^{-1}Q+\frac13I)_i\times\curl Q_{i}\right|^2,
		\\
		&s_+^2(u\cdot \curl u)^2=s_+^2(s_+^{-1}u_iu_j(\curl Q_j)_i)^2=\(  (s_+^{-1}Q +\frac13I)_{ij}(\curl Q_j)_i\)^2.\end{align}
	It then follows from \eqref{V last} and \eqref{u and Q} that
	\begin{align*}\alabel{I4}
		&s_+^2\(2|\D  u|^2-|\curl u|^2-(\div  u)^2\)
		\\
		=&\frac12 \sum_{l,k=1}^3\sum_{i\neq j}((s_+^{-1}Q +\frac13I)_{lk})^2\((\nabla_iQ_{ik} +\nabla_jQ_{jk})^2+(\nabla_iQ_{jk}+\nabla_jQ_{ik})^2 \).
	\end{align*}
	Substituting the identities  \eqref{u into Q I2}-\eqref{I4} into the equation \eqref{W}, we complete a proof.
\end{proof}

\medskip
\noindent{\bf Acknowledgements:}
We would like to thank   Professors John Ball  and Arghir Zarnescu for  their useful comments on our early version.

\medskip

\noindent{\bf Data availability.} No datasets were generated or analyzed during the current study.

\end{document}